% !TeX spellcheck = en_US
%\documentclass{proc-l}
%\usepackage[brazil]{babel}
%\usepackage[ansinew]{inputenc}
%\usepackage{psfrag}
%\input{psfig}
%\setcounter{MaxMatrixCols}{10}

\documentclass[11pt,reqno]{amsart}
%%%%%%%%%%%%%%%%%%%%%%%%%%%%%%%%%%%%%%%%%%%%%%%%%%%%%%%%%%%%%%%%%%%%%%%%%%%%%%%%%%%%%%%%%%%%%%%%%%%%%%%%%%%%%%%%%%%%%%%%%%%%%%%%%%%%%%%%%%%%%%%%%%%%%%%%%%%%%%%%%%%%%%%%%%%%%%%%%%%%%%%%%%%%%%%%%%%%%%%%%%%%%%%%%%%%%%%%%%%%%%%%%%%%%%%%%%%%%%%%%%%%%%%%%%%%
\usepackage{amsmath}

\usepackage{amsfonts}
\usepackage{amsthm}

\newtheorem{proposition}{Proposition}

\usepackage{amssymb}
\usepackage[dvips]{graphicx}
\usepackage{color}
\usepackage[pagewise]{lineno}
%\linenumbers
\usepackage{url}
%\userpackage{showkeys}
\setcounter{MaxMatrixCols}{10}
%TCIDATA{OutputFilter=LATEX.DLL}
%TCIDATA{Version=5.50.0.2960}
%TCIDATA{<META NAME="SaveForMode" CONTENT="1">}
%TCIDATA{BibliographyScheme=Manual}
%TCIDATA{LastRevised=Thursday, July 30, 2015 12:01:33}
%TCIDATA{<META NAME="GraphicsSave" CONTENT="32">}
%TCIDATA{Language=American English}

\theoremstyle{plain}
\newtheorem{athm}{Theorem}

%\swapnumbers
\newtheorem{sublemma}{Sublemma}[section]
\newtheorem{theorem}{Theorem}[section]
\newtheorem{corollary}[theorem]{Corollary}
\newtheorem{lemma}[theorem]{Lemma}

\theoremstyle{definition}

\newtheorem{example}{Example}[section]
\newtheorem{definition}[theorem]{Definition}

\newtheorem{remark}[theorem]{Remark}

\DeclareMathOperator{\esssup}{ess \ sup}

\DeclareMathOperator{\lip}{Lip}

\input{tcilatex}

\begin{document}

\begin{abstract}
    We investigate the statistical stability of a class of dynamical systems semi-conjugate to pre-piecewise \textit{convex or expanding} maps with countably many branches. These systems naturally arise in the study of transformations with unbounded derivatives, discontinuities, or infinite Markov partitions; features that pose significant challenges for stability analysis. Specifically, we consider one-parameter families of transformations $\{F_\delta\}_{\delta \in [0,1)}$ and their corresponding invariant measures $\{\mu_\delta\}$. We provide general conditions ensuring that the unperturbed measure $\mu_0$ is statistically stable, meaning the map $\delta \mapsto \mu_\delta$ is continuous at $\delta = 0$ in the appropriate topology. Furthermore, we establish explicit quantitative estimates for the modulus of continuity of $\mu_\delta$ in terms of the perturbation parameter $\delta$. Our results apply to a broad class of maps, including those semi-conjugate to classical examples such as the Gauss and Lüroth maps.
\end{abstract}

\title[Stability for maps s.c. to pre-piecewise convex or expanding maps]{Statistical stability for systems semi-conjugate to pre-piecewise \textit{convex or expanding} maps with countably many branches}

\author[Rafael Lucena]{Rafael Lucena}

\date{\today }

\keywords{Statistical Stability, Transfer
Operator, Equilibrium States, Skew Product.}

\address[Rafael Lucena]{Universidade Federal de Alagoas, Instituto de Matemática - UFAL, Av. Lourival Melo Mota, S/N
	Tabuleiro dos Martins, Maceio - AL, 57072-900, Brasil}
\email{rafael.lucena@im.ufal.br}
\urladdr{www.im.ufal.br/professor/rafaellucena}
\maketitle

%\thanks{2010 \emph{Mathematics Subject Classification}: Primary 37D20; Secondary 37C20.}

\section{Introduction}

In many dynamical systems, information about long-term behavior is encoded in invariant probability measures.  
When such a measure varies in a controlled way under perturbations of the system, one gains predictive power: small changes in the dynamics lead to small changes in the statistical description of orbits.  
This phenomenon, known as \emph{statistical stability}, plays a central role in both theoretical investigations and applications, as it connects the structural properties of a system with its measurable outcomes.

In this work, we study statistical stability for a class of dynamical systems that are semi-conjugate to pre-piecewise convex or expanding maps with countably many branches.  
These systems often exhibit complex behaviors such as unbounded derivatives, discontinuities, or infinite Markov partitions, making stability analysis a delicate task.  

Formally, let $\{F_{\delta}\}_{\delta \in [0,1)}$ be a one-parameter family of transformations on a suitable measurable space, with $F = F_0$ as the reference system.  
Assume that for each $\delta$ there exists an $F_{\delta}$-invariant probability measure $\mu_{\delta}$.  
We say that $\mu_0$ is \emph{statistically stable} if $\delta \mapsto \mu_{\delta}$ is continuous at $0$ in a chosen topology.  
Our goal is to establish conditions under which this continuity holds and to provide explicit estimates for its modulus.

To establish our results, we construct an appropriate vector space of signed measures that satisfies several essential properties. In particular, it contains the family $\{\mu_\delta\}_{\delta \in [0,1)}$, where each $\mu_\delta$ is the unique $F_\delta$-invariant measure within it. Since positive fixed points of $\operatorname{F}_*$ correspond to $F$-invariant measures, the problem reduces to analyzing the variation of the eigenvectors associated with unit eigenvalues of the family of transfer operators $\{\operatorname{F}_{\delta*}\}_{\delta \in [0,1)}$ of the perturbation $\{F_{\delta}\}_{\delta \in [0,1)}$.

The main results presented here, theorems \ref{thshgf} and \ref{htyttigu}, are derived by applying previous theorems for transformations semi-conjugate to piecewise convex or expanding systems, as established in \cite{L2}, using an approach developed in this paper for perturbations of operators, inspired by \cite{RRRSTAB}.

Other quantitative results on the stability of different classes of dynamical systems can be found in \cite{GL1} and \cite{GLu}. Recent applications of results of this kind can be found in \cite{AO}, where the authors proved the stability of entropy for piecewise expanding maps.

\subsection*{Organization of the paper}

This paper is organized as follows. In Section~\ref{kjdfkjdsfkj}, we detail the hypotheses governing the dynamical systems under study and provide examples illustrating the role of each assumption. Section~\ref{uyrtghfdfdcsdf,mbnv} summarizes key results regarding the spectral properties of transfer operators acting on anisotropic Banach spaces, following the framework of \cite{L2}, which serves as the foundation for our analysis. Section~\ref{76235ghjdf} defines the class of perturbations considered, establishes their fundamental properties, and presents the proofs of our main results, Theorems~A and~B. Finally, Section~\ref{jhgfds} is devoted to the proof of Theorem~\ref{tc}, which ensures that the class of perturbations investigated arises naturally in the context of important systems semi-conjugated to Gauss and Luroth maps.

\section{Preliminaries and Examples}\label{kjdfkjdsfkj}

Throughout the paper, we adopt the notation \( I = [0,1] \), with \( \mathcal{B} \) denoting the Borel \( \sigma \)-algebra on \( I \), and \( m \) the Lebesgue measure normalized on \( \mathcal{B} \). The standard Euclidean metric on \( I \) is denoted by \( d_1 \), and \( K \) stands for a compact metric space equipped with a metric \( d_2 \). Without loss of generality, we assume \( \mathrm{diam}(K) = 1 \) to simplify expressions. We consider the product space \( \Sigma := I \times K \), endowed with the metric \( d_1 + d_2 \), making \( \Sigma \) itself a metric space.

In this section, we introduce the assumptions underlying the dynamical system considered throughout the paper: 
\[
F: \Sigma \to \Sigma, \quad F(x,y) = (f(x), G(x,y)),
\]
where \( f: I \to I \) is a measurable transformation on the interval and \( G: \Sigma \to K \) is a measurable function that governs the dynamics on the fiber.

\subsection{Pre-piecewise convex or expanding maps with countably many branches}

Let $I=[0,1]$ and let $\mathcal {P}=\{I_i=(a_i,b_i)\}_{i=1}^\infty$ be a family of open disjoint subintervals of $I$ such that $m\left(I\setminus\bigcup_{i\ge 1}I_i\right)=0$.

\begin{definition}\label{def1}
	We say that a map \( f: \bigcup_{i \geq 1} I_i \to I \) is a \emph{piecewise convex map with countably many branches} on the partition \( \mathcal{P} \) if the following conditions are satisfied:
	
	\begin{enumerate}
		\item For each \( i \geq 1 \), the restriction \( f_i := f|_{I_i} \) is an increasing, convex, and differentiable function such that
		\[
		\lim_{x \to a_i^+} f_i(x) = 0.
		\]
		Define \( f_i(a_i) := 0 \) and \( f_i(b_i) := \lim_{x \to b_i^-} f_i(x) \). The value \( f_i'(a_i) \) is also defined by continuity from the right.
		
		\item The derivatives at the endpoints satisfy the summability condition:
		\[
		\sum_{i \geq 1} \frac{1}{f_i'(a_i)} < +\infty.
		\]
		
		\item If \( x = 0 \) is not a limit point of the partition points \( \{a_i\}_{i \geq 1} \), then
		\[
		f'(0) = \frac{1}{\beta} > 1,
		\]
		for some constant \( 0 < \beta < 1 \).
	\end{enumerate}
\end{definition}

\begin{definition}\label{def2}
	We say that a map \( f: \bigcup_{i \geq 1} I_i \to I \) is a \emph{piecewise expanding map with countably many branches} on the partition \( \mathcal{P} \) if, for each \( i \geq 1 \), the restriction \( f|_{I_i} \) extends to a homeomorphism \( f_i: [a_i, b_i] \to f_i([a_i, b_i]) \).
	
	Moreover, define the function \( g: I \to \mathbb{R} \) by
	\[
	g(x) = 
	\begin{cases}
		\dfrac{1}{|f_i'(x)|}, & \text{if } x \in I_i \text{ for some } i \geq 1, \\
		0, & \text{otherwise}.
	\end{cases}
	\]
	We require that \( \sup_{x \in I} |g(x)| \leq \beta < 1 \).
\end{definition}

The proof of the following theorem can be found in \cite{RG2}.

\begin{theorem}\label{raj} If $\tau$ satisfies Definition \ref{def1}, then some iterate  $\tau^n$ satisfies Definition \ref{def2} for all sufficiently large $n\in \mathbb{N}$.
\end{theorem}

\subsubsection{Pre monotonic or expanding systems and bounded variation functions}

A measurable map \( f: \bigcup_{i \geq 1} I_i \to I \) is said to be \emph{non-singular} if the pushforward of the Lebesgue measure \( m \) by \( f \), denoted \( f_* m \), satisfies \( f_* m \ll m \).

\begin{definition}\label{bv}
	Given $h:I \rightarrow \mathbb{R}$ we define variation of $h$ on a subset $J \subset I$ by
	\begin{equation*}
		V_J(h)=\sup\{\sum_{i=1}^k |h(x_i)-h(x_{i-1})|\},
	\end{equation*}
	where the supremum is taken over all finite sequences $(x_1,x_2,...x_k) \subset J$, where $x_1\leq x_2\leq...\leq x_k$. We need a variation $V(h)$ for $h\in L^1_m$, the set of all equivalence classes of real-valued, $m$-integrable functions on $I$.\\
	Let $BV_m =\{h\in L^1_m: V(h)<+\infty\}$, where $$V(h)=\inf\{ V_I ^*: \text{$h^*$ belongs to the equivalence class of $h$} \}.$$ We define for $h\in BV_m$,
	\begin{equation*}
		|h|_v= \int |h| dm + V(h).
	\end{equation*}
\end{definition}

Given a non-singular map \( f: \bigcup_{i \geq 1} I_i \to I \), we define its Frobenius–Perron operator \( \func{P}_f: L^1_m \to L^1_m \) via the duality relation
\[
\int h_1 \, \func{P}_f(h_2) \, dm = \int (h_1 \circ f) \, h_2 \, dm,
\]
for all \( h_1 \in L^\infty_m \) and \( h_2 \in L^1_m \), where \( L^p_m \) denotes the usual Lebesgue space of \( p \)-integrable functions with respect to \( m \) (for \( p = 1, \infty \)).

\begin{definition}\label{def3}	
	We say that a transformation \( f: \bigcup_{i \geq 1} I_i \to I \) is a \emph{pre-piecewise convex or expanding map with countably many branches} on the partition \( \mathcal{P} \) if the following conditions are satisfied:
	\begin{enumerate}
		\item There exists an integer \( n_0 \in \mathbb{N} \) such that the iterate \( f^{n_0} \) satisfies either Definition~\ref{def1} or Definition~\ref{def2};
		
		\item The map \( f: I \to I \) is non-singular with respect to the Lebesgue measure \( m \);
		
		\item The associated Frobenius–Perron operator \( \func{P}_f: BV_m \to BV_m \) is bounded;
		
		\item There exists a function \( h_1 \in BV_m \) such that \( \func{P}_f(h_1) = h_1 \), and the measure \( m_1 = h_1 m \) is a mixing probability measure. 
	\end{enumerate}
\end{definition}

\begin{remark}\label{rem1}
If $f$ satisfies either Definition~\ref{def1} or Definition~\ref{def2}, then $f$ satisfies Definition~\ref{def3} with $n_0 = 1$. Indeed, in this case $f$ is non-singular and the associated Frobenius--Perron operator $\func{P}_f$ is well-defined and bounded on $BV_m$. Moreover, by the results of Rajput and Góra (see \cite{RG2}), the map admits an absolutely continuous invariant probability measure whose density belongs to $BV_m$, and this measure is exact, hence mixing. These properties ensure that all items in Definition~\ref{def3} are satisfied.
\end{remark}

Systems satisfying Definition~\ref{def3} enjoy the spectral gap property. More precisely, the associated Frobenius--Perron operator 
\(\func{P}_f: BV_m \to BV_m\) satisfies the following result (see Corollary~2.19 in~\cite{L2} and also~\cite{RG2} for details).

	\begin{theorem}\label{gapf}
	Suppose that \( f: I \to I \) satisfies Definition \ref{def3}. Then the operator \( \func{P}_f \) is has spectral gap on the space \( BV_m \).
		
		More precisely, we have\\
		(1) $\func{P}_f: BV_m \rightarrow BV_m$ has $1$ as the only eigenvalue of modulus $1$.\\
		(2) $E_1:=\{h\in L_m^1 : \func{P}_f h=h\} \subseteq BV_m$ and $E_1$ is one dimensional.\\
		(3) $\displaystyle \func{P}_f= \Psi +Q$, where $\Psi$ represents the projection on eigenspace  $E_1$, $|\Psi|_1\leq 1$ and $Q$ is a linear operator on $L_m^1$ with $Q(BV_m) \subseteq BV_m$, $\displaystyle \sup_{n\in \mathbb{N}} |Q^n|_1 < \infty$ and $Q \cdot \Psi = \Psi \cdot Q  = 0$.\\
		(4) $Q(BV_m)\subset BV_m$ and, considered as a linear operator on $(BV_m,|\cdot|_v)$, $Q$ satisfies $|Q^n|_v \leq H \cdot q^n$  $(n\geq 1)$ for some constants $H>0$ and $0<q<1$. 
\end{theorem}

\subsection{Assumptions on $G$}

We suppose that $G: \Sigma \longrightarrow K$ satisfies:

We assume that the map \( G: \Sigma \to K \) satisfies the following condition:

\begin{enumerate}
	\item[(H1)] \( G \) is uniformly contracting along \( m \)-almost every vertical fiber \( \gamma_x := \{x\} \times K \); that is, there exists a constant \( 0 \leq \alpha < 1 \) such that for \( m \)-almost every \( x \in I \),
	\begin{equation}\label{contracting1}
		d_2(G(x, z_1), G(x, z_2)) \leq \alpha \, d_2(z_1, z_2), \quad \text{for all } z_1, z_2 \in K.
	\end{equation}
\end{enumerate}

We denote by \( \mathcal{F}^s \) the family of vertical fibers:
\[
\mathcal{F}^s := \left\{ \gamma_x := \{x\} \times K \; ; \; x \in I \right\}.
\]
When the context is clear, we will refer to elements of \( \mathcal{F}^s \) simply by \( \gamma \) instead of \( \gamma_x \).

\begin{remark}
	Elements of \( \mathcal{F}^s \), that is, the fibers \( \gamma_x = \{x\} \times K \) for \( x \in I \), are naturally associated with their base point \( x \). For convenience, we will sometimes use the same symbol to refer to both the fiber \( \gamma \) (suppressing the subscript \( x \)) and the point \( x \), whenever the context allows. For example, if \( \phi: I \to \mathbb{R} \) is a function, the expressions \( \phi(x) \) and \( \phi(\gamma) \) will be used interchangeably, identifying \( \gamma \) with its base point \( x \).
\end{remark}

\begin{enumerate}
	\item[(H2)] Let \( \{I_s\}_{s \in \mathbb{N}} \) be the partition of \( I \) as given by Definitions~\ref{def1} or~\ref{def2}. Assume that, for each \( s \in \mathbb{N} \), the map \( G \) satisfies the uniform Lipschitz condition:
	\begin{equation}\label{oityy}
		|G_s|_{\mathrm{Lip}} := \sup_{y \in K} \sup_{x_1, x_2 \in I_s} \frac{d_2(G(x_1, y), G(x_2, y))}{d_1(x_1, x_2)} < \infty.
	\end{equation}
\end{enumerate}

We define the global Lipschitz constant of \( G \) along the horizontal direction as
\begin{equation}\label{jdhfjdh}
	|G|_{\mathrm{Lip}} := \sup_{s \in \mathbb{N}} |G_s|_{\mathrm{Lip}} < \infty.
\end{equation}

\begin{remark}
	Condition (H2) allows for the possibility that \( G \) is discontinuous along the sets \( \partial I_i \times K \), for all \( i \in \mathbb{N} \), where \( \partial I_i \) denotes the boundary of the interval \( I_i \).
\end{remark}

\begin{remark}\label{uyrytuert}
	In certain cases, the map \( G \) may exhibit discontinuities along any countable collection of vertical fibers of the form \( \{x\} \times K \), with \( x \in I \). For further discussion, we refer the reader to Remark~2.8 in~\cite{L2}.
\end{remark}

\begin{enumerate}
	\item[(H3)] There exists an integer \( k \in \mathbb{N} \) such that the \( k \)-th iterate \( F^k = (f^k, G_k) \) satisfies the inequality
	\[
	\alpha_4 := \alpha^k \cdot \esssup \left( \frac{1}{|(f^k)'|} \right) < 1,
	\]
	where the essential supremum is taken with respect to the Lebesgue measure \( m \). Furthermore, the fiber map \( G_k \) satisfies conditions (H1) (with contraction rate \( \alpha^k \)) and (H2).
\end{enumerate}

\begin{definition}\label{closed}
	We say that condition \emph{(H2) is closed} if, for every \( n \geq 1 \), the fiber component \( G_n \) associated with the \( n \)-th iterate of the map \( F = (f, G) \), defined by \( F^n = (f^n, G_n) \), also satisfies condition (H2).
\end{definition}

\begin{remark}
	If condition (H2) is satisfied, then condition (H3) automatically holds.
\end{remark}

\subsection{Examples}\label{dkjfhksjdhfksdf}

\subsubsection{Illustrative Examples of the Map $f$}

\begin{example}[Piecewise Linear Expanding Map]\label{slopes1}
	Let \( I = [0,1] \) and denote by \( m \) the Lebesgue measure on \( I \). Consider a countable collection of pairwise disjoint open intervals \( (I_i)_{i \in \mathbb{N}} \) such that
	\[
	m\left(I \setminus \bigcup_{i \geq 1} I_i\right) = 0.
	\]
	Define a map \( f: \bigcup_{i=1}^\infty I_i \to [0,1] \) such that each restriction \( f_i := f|_{I_i} \) is linear with constant slope \( k_i \). Assume further that
	\[
	\inf_{i \in \mathbb{N}} k_i > 1 \quad \text{and} \quad \sum_{i=1}^\infty \frac{1}{k_i} < +\infty.
	\]
	Under these conditions, the map \( f \) satisfies Definition~\ref{def2}. For additional discussion and technical details, see~\cite{RLK}.
\end{example}

\begin{example}[Piecewise Linear Map with Mixed Slopes]\label{slopes2}
	Let \( I = [0,1] \), and let \( m \) denote the Lebesgue measure on \( I \). Consider a countable collection of pairwise disjoint open intervals \( (I_i)_{i \in \mathbb{N}} \) such that
	\[
	m\left(I \setminus \bigcup_{i \geq 1} I_i\right) = 0.
	\]
	Define a map \( f: \bigcup_{i=1}^\infty I_i \to [0,1] \) such that each branch \( f_i := f|_{I_i} \) is linear with slope \( k_i \), and assume that condition (1) of Definition~\ref{def1} holds.
	
	Suppose that \( k_i \in (0,1) \) for only finitely many indices \( i \geq 2 \), and denote this finite set by \( \mathbb{N}_1 \). Assume further that \( \inf_{i \in \mathbb{N}_1^c} k_i > 1 \) and
	\[
	\sum_{i=1}^\infty \frac{1}{k_i} < + \infty.
	\]
	Under these assumptions, the map \( f \) satisfies Definition~\ref{def1}, but does not satisfy Definition~\ref{def2} at time one. In particular, \( f'(x) < 1 \) for all \( x \in \bigcup_{i \in \mathbb{N}_1} I_i \).
\end{example}

\begin{remark}
The following example presents a map that fails to satisfy Definitions~\ref{def1} and~\ref{def2} at time one, but whose second iterate does satisfy both. This provides a strict and nontrivial instance of a pre-piecewise convex or expanding map, as defined in Definition~\ref{def3}, with \( n_0 > 1 \). 
\end{remark}

\begin{example}[Gauss Map]\label{gauss}
	Let \( \mathcal{P} = (I_i)_{i=1}^\infty \) be the partition of the interval \( [0,1] \) given by
	\[
	I_i = \left( \frac{1}{i+1}, \frac{1}{i} \right), \quad \text{for all } i \in \mathbb{N}.
	\]
	Define the map \( f: \bigcup_{i=1}^\infty I_i \to [0,1] \) by
	\[
	f(x) = \frac{1}{x} - i, \quad \text{for all } x \in I_i.
	\]
	It is well known that \( \inf (f^2)' \geq 2 \), which ensures that \( f \) satisfies Definition~\ref{def2}.
	
	Furthermore, \( f \) admits an absolutely continuous invariant probability measure \( m_1 \) with respect to the Lebesgue measure \( m \), whose density is given by
	\[
	h_1(x) = \frac{1}{(1 + x)\log 2}.
	\]
	For additional details, we refer the reader to~\cite{Kva},~\cite{WP}, and~\cite{WP2}.
\end{example}

\begin{example}[$\mathcal{P}$-Lüroth Maps]\label{luroth}
	Let \( \mathcal{P} = (I_i)_{i=1}^\infty \) be a countable partition of the interval \( [0,1] \), where each \( I_i \) is a non-empty, left-open and right-closed interval. Assume that the intervals \( I_i \) are ordered from right to left, beginning with \( I_1 \), and that they accumulate only at \( 0 \).
	
	Let \( a_i := m(I_i) \) denote the Lebesgue measure of \( I_i \), and define the tail sum \( t_i := \sum_{k=i}^\infty a_k \), which corresponds to the Lebesgue measure of the \( i \)-th tail of the partition.
	
	The \(\mathcal{P}\)-Lüroth map \( f_{\mathcal{P}}: [0,1] \to [0,1] \) is defined by
	\[
	f_{\mathcal{P}}(x) = 
	\begin{cases}
		\dfrac{t_i - x}{a_i}, & \text{if } x \in I_i \text{ for some } i \geq 1, \\
		0, & \text{otherwise}.
	\end{cases}
	\]
	It can be shown that \( f_{\mathcal{P}} \) satisfies Definition~\ref{def2}. For further information and related results, see~\cite{MSO}.
\end{example}

\subsubsection{Illustrative Examples of the Fiber Map $G$}

\begin{example}[Discontinuous Maps with Constant Fiber Contractions]\label{h3}
	Let \( F = (f, G) \) be a measurable map, where \( f \) satisfies Definition~\ref{def1} or Definition~\ref{def2} for some iterate \( f^n \). Consider a sequence of real numbers \( \{\alpha_i\}_{i=1}^\infty \) such that
	\[
	0 \leq \alpha_i < \alpha_{i+1} \leq \alpha < 1, \quad \text{for all } i \geq 1.
	\]
	Define \( G: [0,1] \times [0,1] \to [0,1] \) by
	\[
	G(x, y) = \alpha_i y, \quad \text{for all } x \in I_i, \ y \in [0,1], \text{ and each } i \geq 1.
	\]
	Clearly, \( G \) is discontinuous along the sets \( \partial I_i \times [0,1] \) for all \( i \in \mathbb{N} \).
	
	Nevertheless, \( G \) satisfies condition (H2) since each branch is constant with respect to \( x \), implying \( |G|_{\mathrm{lip}} = 0 \) (see equation~\eqref{jdhfjdh}). Moreover, as \( G \) is an \( \alpha \)-contraction on each vertical fiber, it also satisfies condition (H1).
\end{example}

\begin{example}[Discontinuous Maps with Lipschitz Fiber Coefficients]
	Let \( F = (f, G) \) be a measurable map, where \( f \) satisfies Definition~\ref{def1} or Definition~\ref{def2} for some iterate \( f^n \). Denote the atoms of the partition \( \mathcal{P} \) by \( I_i := (a_i, b_i) \), for all \( i \geq 1 \), as in Definitions~\ref{def1} and~\ref{def2}.
	
	Fix a real number \( 0 \leq \alpha < 1 \), and let \( \{h_i\}_{i=1}^\infty \) be a sequence of real-valued Lipschitz functions such that \( h_i : I_i \to [0,1] \), \( 0 \leq h_i(x) \leq \alpha < 1 \) for all \( x \in I_i \), and
	\[
	\sup_{i \geq 1} L(h_i) < \infty,
	\]
	where \( L(h_i) \) denotes the Lipschitz constant of \( h_i \). Assume also that the functions are not continuous across the partition boundaries, i.e., \( h_i(b_i) \neq h_{i+1}(a_{i+1}) \) for all \( i \geq 1 \).
	
	Define \( G : [0,1] \times [0,1] \to [0,1] \) by
	\[
	G(x, y) = h_i(x) y, \quad \text{for all } x \in I_i \text{ and } y \in [0,1].
	\]
	
	Under these assumptions, \( G \) is discontinuous along the sets \( \partial I_i \times [0,1] \) for all \( i \in \mathbb{N} \). Nevertheless, \( G \) satisfies condition (H2) because the Lipschitz constants of the \( h_i \)'s are uniformly bounded (see Equation~\eqref{jdhfjdh}). Furthermore, since each \( h_i(x) \leq \alpha < 1 \), the map \( G \) is an \( \alpha \)-contraction on each vertical fiber, ensuring that condition (H1) is also satisfied.
	
	\begin{remark}
		This example illustrates how fiberwise contraction can coexist with horizontal discontinuities. Even though the map \( G \) is not continuous along vertical lines crossing partition boundaries, its structure allows for uniform contraction along the fibers and controlled horizontal regularity, ensuring that assumptions (H1) and (H2) remain valid.
	\end{remark}
\end{example}

\begin{example}[Closure of (H2) under iteration]
	Consider a map \( F = (f, G) \), where \( f \) is as in Example~\ref{slopes2} and \( G \) is defined as in Example~\ref{h3}. Then condition (H2) remains valid for all iterates of \( F \); that is, (H2) is closed. As a consequence, condition (H3) is also satisfied.
	
	Moreover, the sequence \( \{\alpha_i\}_i \) can be chosen so that the inequality \( \alpha \cdot \esssup g < 1 \) fails to hold at time one, despite holding for some iterate. This illustrates that the contraction required in (H3) can emerge only after iteration. In particular, this construction aligns with the scenario described in Remark~\ref{uyrytuert}, where discontinuities may occur along countably many vertical fibers.
\end{example}

\section{Previous Results on the Transfer Operator of \(F\)}\label{uyrtghfdfdcsdf,mbnv}
	
	We recall in this section some known results on the transfer operator of \( F \), all originating from \cite{L2}. 
	
	Consider a signed measure \( \mu \) defined on the Borel \( \sigma \)-algebra of \( \Sigma := I \times K \). The transfer operator associated with \( F \) is the pushforward map \( \mu \mapsto \func{F}_* \mu \), defined by
	\[
	\func{F}_* \mu (A) = \mu(F^{-1}(A))
	\]
	for every Borel set \( A \subset \Sigma \). 
	
\subsection{Anisotropic Spaces of Signed Measures}
	
\subsubsection{The Spaces $\mathcal{L}^{1}$ and $S^{1}$}\label{jdfjdhkjf}

Let \( \mathcal{SB}(\Sigma) \) denote the space of Borel signed measures on \( \Sigma := I \times K \). For \( \mu \in \mathcal{SB}(\Sigma) \), let \( \mu^+ \) and \( \mu^- \) be the positive and negative parts of its Jordan decomposition, so that \( \mu = \mu^+ - \mu^- \). Let \( \pi_1 : \Sigma \to I \) be the projection \( \pi_1(x,y) = x \), and denote by \( \pi_{1*} \) the corresponding pushforward operator.

Define the class \( \mathcal{AB} \subset \mathcal{SB}(\Sigma) \) as the set of signed measures \( \mu \) whose marginals \( \pi_{1*} \mu^+ \) and \( \pi_{1*} \mu^- \) are absolutely continuous with respect to the Lebesgue measure \( m \):
\[
\mathcal{AB} := \left\{ \mu \in \mathcal{SB}(\Sigma) : \pi_{1*} \mu^+ \ll m \ \text{and} \ \pi_{1*} \mu^- \ll m \right\}.
\]

If \( \mu \in \mathcal{AB} \) is a probability measure, then by Rokhlin's disintegration theorem (Theorem 3.1 of \cite{L2}), there exists a disintegration \( (\{ \mu_\gamma \}_\gamma, \mu_1) \) along the stable partition \( \mathcal{F}^s \), where \( \mu_1 = \pi_{1*} \mu = \phi_1 m \) and \( \{ \mu_\gamma \}_\gamma \) is a measurable family of probability measures supported on the fibers \( \gamma = \{x\} \times K \).

Moreover, since \( \mu_1 \ll m \), the density \( \phi_1 : I \to \mathbb{R} \) is non-negative, defined almost everywhere, and satisfies \( \|\phi_1\|_{L^1(m)} = 1 \). For a non-normalized positive measure \( \mu \in \mathcal{AB} \), a similar disintegration holds, with the same properties, except that \( \|\phi_1\|_{L^1(m)} = \mu(\Sigma) \).

The disintegration \( (\{ \mu_\gamma \}_\gamma, \mu_1) \) satisfies:

\begin{enumerate}
	\item[(a)] \( \mu_\gamma(\gamma) = 1 \) for \( \mu_1 \)-almost every \( \gamma \in \mathcal{F}^s \);
	\item[(b)] For every measurable set \( E \subset \Sigma \), the map \( \gamma \mapsto \mu_\gamma(E) \) is measurable;
	\item[(c)] For every measurable set \( E \subset \Sigma \), we have
	\[
	\mu(E) = \int \mu_\gamma(E) \, d\mu_1(\gamma) = \int \mu_\gamma(E) \phi_1(\gamma) \, dm(\gamma);
	\]
	\item[(d)] If the \( \sigma \)-algebra on \( \Sigma \) has a countable generator, then the disintegration is unique: any other disintegration \( (\{ \mu_\gamma' \}_\gamma, \mu_1) \) satisfies \( \mu_\gamma = \mu_\gamma' \) for \( \mu_1 \)-almost every \( \gamma \).
\end{enumerate}

\begin{definition}\label{restrictionmeasure}
	Let \( \pi_2 : \Sigma \to K \) be the projection \( \pi_2(x,y) = y \), and let \( \pi_{\gamma,2} \) denote its restriction to a fiber \( \gamma = \{x\} \times K \). Given a measure \( \mu \in \mathcal{AB} \) with disintegration \( (\{ \mu_\gamma \}_\gamma, \mu_1 = \phi_1 m) \), we define the \textbf{restriction of \( \mu \) to \( \gamma \)} as the positive measure on \( K \) given by
	\[
	\mu|_\gamma(A) := \pi_{\gamma,2*}(\phi_1(\gamma)\mu_\gamma)(A), \quad \text{for all measurable } A \subset K.
	\]
	
	If \( \mu \in \mathcal{AB} \) is a signed measure with Jordan decomposition \( \mu = \mu^+ - \mu^- \), then its restriction to \( \gamma \) is defined as
	\[
	\mu|_\gamma := \mu^+|_\gamma - \mu^-|_\gamma.
	\]
\end{definition}

\begin{remark}\label{ghtyhh}
	As shown in Appendix 2 of \cite{GLu}, the definition of \( \mu|_\gamma \) is independent of the chosen decomposition. That is, if \( \mu = \nu_1 - \nu_2 \) for any pair of positive measures \( \nu_1, \nu_2 \), then \( \mu|_\gamma = \nu_1|_\gamma - \nu_2|_\gamma \) for \( \mu_1 \)-almost every \( \gamma \in I \).
\end{remark}

	Let \( (X, d) \) be a compact metric space and let \( h : X \to \mathbb{R} \) be a Lipschitz function. Its optimal Lipschitz constant is given by
	\begin{equation}\label{lipsc}
		L(h) := \sup_{x \neq y} \frac{|h(x) - h(y)|}{d(x, y)}.
	\end{equation}
	
	\begin{definition}\label{wasserstein}
		Given signed measures \( \mu \) and \( \nu \) on \( X \), we define the \textbf{Wasserstein–Kantorovich} type distance as
		\[
		W_1(\mu, \nu) := \sup_{\substack{L(h) \leq 1 \\ |h|_\infty \leq 1}} \left| \int h \, d\mu - \int h \, d\nu \right|.
		\]
	\end{definition}
	
	We also write
	\begin{equation}\label{WW}
		\|\mu\|_W := W_1(0, \mu),
	\end{equation}
	noting that \( \|\cdot\|_W \) defines a norm on the space of signed measures on \( X \), equivalent to the dual norm of the Lipschitz space.
	
	\begin{definition}\label{sdfsdfsdasd}
		Let \( \mathcal{L}^1 \subset \mathcal{AB}(\Sigma) \) be the space of signed measures whose vertical disintegration satisfies
		\[
		\mathcal{L}^1 := \left\{ \mu \in \mathcal{AB} : \int W_1(\mu^+|_\gamma, \mu^-|_\gamma) \, dm < \infty \right\}.
		\]
		Define the functional \( \|\cdot\|_1 : \mathcal{L}^1 \to \mathbb{R} \) by
		\[
		\|\mu\|_1 := \int \|\mu|_\gamma\|_W \, dm(\gamma),
		\]where $\|\mu|_\gamma\|_W = W_1(\mu^+|_\gamma, \mu^-|_\gamma)$.
		
		Now let
		\[
		S^1 := \left\{ \mu \in \mathcal{L}^1 : \phi_1 \in BV_m \right\},
		\]
		where \( \phi_1 \) is the marginal density associated to \( \mu \), and define the norm
		\[
		\|\mu\|_{S^1} := |\phi_1|_v + \|\mu\|_1.
		\]
	\end{definition}
	
	Both \( (\mathcal{L}^1, \|\cdot\|_1) \) and \( (S^1, \|\cdot\|_{S^1}) \) are normed vector spaces. Their properties are elementary to verify; see, for instance, analogous constructions in \cite{L}.

\subsubsection{The Space of Measures of Bounded Variation}\label{xbcvhgsafd}

Let \( \Sigma := I \times K \). A signed measure \( \mu \) on \( \Sigma \) admits a disintegration along the stable foliation \( \mathcal{F}^s \), yielding a family \( \{ \mu|_\gamma \}_{\gamma \in I} \) of signed measures on \( K \). Since \( \mathcal{F}^s \) is parametrized by \( I \), this disintegration defines (almost everywhere) a measurable map \( \Gamma_\mu : I \to \mathcal{SM}(K) \), where \( \mathcal{SM}(K) \) denotes the space of signed Borel measures on \( K \), endowed with the Wasserstein–Kantorovich distance (see Definition~\ref{wasserstein}). Explicitly,
\[
\Gamma_\mu(\gamma) := \mu|_\gamma,
\]
with \( \mu|_\gamma := \pi_{\gamma, 2*} \left( \phi_1(\gamma) \mu_\gamma \right) \), where \( (\{\mu_\gamma\}, \phi_1) \) is a disintegration of \( \mu \).

Since disintegrations are only defined \( m \)-almost everywhere and are not unique, we define:

\begin{definition}
	Let \( \mu \in \mathcal{AB} \), and let \( \omega = (\{\mu_\gamma\}, \phi_1) \) be any disintegration of \( \mu \). Define the associated path \( \Gamma^\omega_\mu : I_\omega \to \mathcal{SM}(K) \), where \( I_\omega \subset I \) is a full \( m \)-measure subset, by
	\[
	\Gamma^\omega_\mu(\gamma) := \mu|_\gamma = \pi_{\gamma, 2*} \left( \phi_1(\gamma) \mu_\gamma \right).
	\]
	The equivalence class of all such paths associated to \( \mu \) is denoted by
	\[
	\Gamma_\mu := \{ \Gamma^\omega_\mu \}_\omega.
	\]
\end{definition}

\begin{definition}
	Given a path \( \Gamma^\omega_\mu \in \Gamma_\mu \), define its \textbf{variation over \( I_\omega \)} by
	\begin{equation}\label{Lips1}
		V_{I_\omega}(\Gamma^\omega_\mu) := \sup_{\{x_i\} \subset I_\omega} \sum_{i} \| \Gamma^\omega_\mu(x_{i+1}) - \Gamma^\omega_\mu(x_i) \|_W.
	\end{equation}
	The \textbf{variation of the measure \( \mu \)} is defined as
	\begin{equation}\label{Lips2}
		V_I(\mu) := \inf_{\Gamma^\omega_\mu \in \Gamma_\mu} V_{I_\omega}(\Gamma^\omega_\mu).
	\end{equation}
\end{definition}

\begin{remark}
	Given \( \eta \subset I_\omega \), we denote the variation of \( \Gamma^\omega_\mu \) over \( \eta \) by \( V_\eta(\Gamma^\omega_\mu) \). If \( \eta \subset I \) is an interval, we define
	\[
	V_\eta(\mu) := \inf_{\Gamma^\omega_\mu \in \Gamma_\mu} V_{\eta \cap I_\omega}(\Gamma^\omega_\mu).
	\]
\end{remark}

\begin{remark}
	When no confusion arises, we write \( \mu|_\gamma \) in place of \( \Gamma^\omega_\mu(\gamma) \).
\end{remark}

\begin{definition}
	The space of measures of \textbf{bounded variation} is defined as
	\[
	\mathcal{BV}_m := \left\{ \mu \in \mathcal{AB} : V_I(\mu) < +\infty \right\}.
	\]
\end{definition}
\begin{remark}
	Other sort of regularities of disintegration and their respective applications can be seen in \cite{RR}, \cite{RRR}, \cite{GL1}, and \cite{DR}. 
\end{remark}
	
\subsection{Basic Properties of the Pushforward Operator $\func{F}_*$}\label{invt}

In this section, we present some properties of the linear operator $\func{F}_*$, which will be used to establish the main theorems of this paper.

All results in Sections~\ref{12hg} and~\ref{bdhfdf} were proved in~\cite{L2}. We refer the reader to Corollary~2.19, Proposition~2.20, and Remark~2.21 therein, which explain why the results of~\cite{L2} apply to systems $f$ satisfying Definition~\ref{def3}.

	\subsubsection{Preliminary properties of $\func{F}_*$}\label{12hg}
	
	\begin{proposition}
		\label{niceformulaab}	Let $F:\Sigma \longrightarrow \Sigma$ ($F=(f,G)$) be a transformation, where $f$ satisfies Definition \ref{def3} and $G$ satisfies (H1). Let $\gamma \in \mathcal{F}^{s}$ be a stable leaf. Define the map $F_{\gamma }:K\longrightarrow K$ by 
		\begin{equation}\label{ritiruwt}
			F_{\gamma }=\pi _{2}\circ F|_{\gamma }\circ \pi _{\gamma ,2}^{-1}.
		\end{equation}%
		Then, for each $\mu \in \mathcal{L}^{1}$ and for almost all $\gamma \in
		I$ it holds 
		\begin{equation}
			(\func{F}_{\ast }\mu )|_{\gamma }=\sum_{i=1}^{+\infty}{\func{F}%
				_{\gamma _i \ast }\mu |_{\gamma _i }g _i(\gamma _i)\chi _{f_{i}(I_{i})}(\gamma )}\ \ m%
			\mathnormal{-a.e.}\ \ \gamma \in I  \label{niceformulaa}
		\end{equation}%
		where $\func{F}_{\gamma_i \ast }$ is the pushforward map
		associated to $\func{F}_{\gamma_i}$, $\gamma _i = f_{i}^{-1}(\gamma )$ when $\gamma \in f_i (I_i)$ and $g_i(\gamma)= \dfrac{1}{|f_i^{'}(\gamma)|}$, where $f_i = f|_{I_i}$.
	\end{proposition}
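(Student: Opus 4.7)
My plan is to establish the formula by testing both sides against rectangular sets $A \times E$ with $A \subset I$ and $E \subset K$ Borel, and then invoking uniqueness of the disintegration. The key point is that the pushforward $\func{F}_*\mu$ is defined by $\func{F}_*\mu(A\times E)=\mu(F^{-1}(A\times E))$, so everything reduces to computing $\mu(F^{-1}(A\times E))$ in two ways: first via the direct preimage description, and then via the disintegration of $\mu$ along $\mathcal{F}^s$.

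First I would reduce to the case of a positive measure, since for a signed $\mu\in\mathcal{L}^1$ one can split $\mu=\mu^+-\mu^-$ and apply the statement to each part (this is consistent with Remark~\ref{ghtyhh}, which says the restriction to fibers is independent of the chosen decomposition). Assuming $\mu$ is positive with disintegration $(\{\mu_\gamma\}_\gamma,\phi_1 m)$, I would decompose the preimage along branches:
\[
F^{-1}(A\times E)=\bigsqcup_{i\ge 1}\bigl\{(x,y)\in I_i\times K : f_i(x)\in A,\ G(x,y)\in E\bigr\}.
\]
Applying the disintegration on each piece gives
\[
\mu(F^{-1}(A\times E))=\sum_{i\ge 1}\int_{f_i^{-1}(A\cap f_i(I_i))}\mu_\gamma\bigl(\{y\in K: G(\gamma,y)\in E\}\bigr)\,\phi_1(\gamma)\,dm(\gamma).
\]
Using the definition \eqref{ritiruwt} of $F_\gamma$ together with Definition~\ref{restrictionmeasure}, the inner factor rewrites as
\[
\mu_\gamma\bigl(F_\gamma^{-1}(E)\bigr)\phi_1(\gamma)=F_{\gamma*}\bigl(\phi_1(\gamma)\pi_{\gamma,2*}\mu_\gamma\bigr)(E)=F_{\gamma*}(\mu|_\gamma)(E).
\]

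Next I would perform the change of variables $\tilde\gamma=f_i(\gamma)$ on each branch; since $f$ is non-singular and $f_i$ is a bijection onto $f_i(I_i)$, this produces $dm(\gamma)=g_i(\tilde\gamma)\,dm(\tilde\gamma)$ with $\gamma=\gamma_i=f_i^{-1}(\tilde\gamma)$. Thus
\[
\mu(F^{-1}(A\times E))=\int_A \sum_{i\ge 1} F_{\gamma_i*}(\mu|_{\gamma_i})(E)\,g_i(\gamma_i)\chi_{f_i(I_i)}(\gamma)\,dm(\gamma).
\]
On the other hand, the disintegration of $\func{F}_*\mu\in\mathcal{AB}$ (which belongs to the class because $f$ is non-singular and $\pi_{1*}\func{F}_*\mu=f_*\pi_{1*}\mu\ll m$) gives $\func{F}_*\mu(A\times E)=\int_A(\func{F}_*\mu)|_\gamma(E)\,dm(\gamma)$. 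Comparing both expressions for every Borel $A\subset I$ and every Borel $E\subset K$, and invoking uniqueness of the disintegration (property (d) in Section~\ref{jdfjdhkjf}), yields the claimed identity $m$-almost everywhere.

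The main obstacle I expect is purely technical: justifying the interchange of the infinite sum with the integral and verifying the joint measurability of $\gamma\mapsto F_{\gamma*}(\mu|_\gamma)(E)$, which is needed both to make sense of the right-hand side as an element of the disintegration and to apply Fubini/Tonelli. For this I would rely on the hypothesis $\mu\in\mathcal{L}^1$ (so the fiberwise masses $\|\mu|_\gamma\|_W$ are $m$-integrable in $\gamma$) together with the uniform Lipschitz structure of $G$ provided by (H1) and (H2); positivity of each summand permits the use of Tonelli's theorem, and once positivity is established the extension to the signed case is immediate. The summability condition $\sum_i 1/f_i'(a_i)<\infty$ from Definition~\ref{def1} (or the analogous bound from Definition~\ref{def2}) ensures that the series converges in the relevant norms.
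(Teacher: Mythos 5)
Your argument is correct and follows the natural route: test $\func{F}_*\mu$ against rectangles $A\times E$, decompose the preimage $F^{-1}(A\times E)$ along the branches of $f$, apply the disintegration of $\mu$, change variables via $f_i$ on each branch, and invoke uniqueness of disintegrations (property (d) of Section~\ref{jdfjdhkjf}) together with the countable generation of the Borel $\sigma$-algebra of $K$. The paper itself does not reproduce a proof (it cites \cite{L2}), but this is the standard derivation and matches the structure that the compact form in Remark~\ref{çlkhgh} and the $\func{P}_f$-identity $(\func{F_*}\mu)|_\gamma = \pi_{2*}(\func{P}_f\phi_1(\gamma)\mu_\gamma)$ presuppose.

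Two small notational corrections are worth flagging. First, $\mu_\gamma$ is a measure on the fiber $\gamma=\{x\}\times K\subset\Sigma$, so writing $\mu_\gamma\bigl(\{y\in K:G(\gamma,y)\in E\}\bigr)$ and $\mu_\gamma(F_\gamma^{-1}(E))$ conflates sets in $K$ with sets in $\Sigma$; you should insert $\pi_{\gamma,2*}$, i.e.\ $\pi_{\gamma,2*}\mu_\gamma(F_\gamma^{-1}(E))$, before identifying this with $F_{\gamma*}(\mu|_\gamma)(E)/\phi_1(\gamma)$. Second, in the change of variables the Jacobian is $g_i(\gamma_i)=1/|f_i'(f_i^{-1}(\tilde\gamma))|$, not $g_i(\tilde\gamma)$; your final formula already has the correct argument, so this is only a slip in the intermediate line. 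Finally, the summability $\sum_i 1/f_i'(a_i)<\infty$ is not actually what makes the series converge for a.e.\ $\gamma$: since $\|\mu|_{\gamma_i}\|_W=\phi_1(\gamma_i)$, the series totals $\func{P}_{f}|\phi_1|(\gamma)$, which is finite $m$-a.e.\ simply because $\func{P}_f$ preserves $L^1_m$ (non-singularity). The summability condition is what is needed for $\func{P}_f$ to act on $BV_m$, not for pointwise convergence of this sum.
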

	
	\begin{remark}\label{çlkhgh}
		In Equation~\eqref{niceformulaa}, each term of the form 
		\[
		\func{F}_{\gamma_i *} \left( \mu|_{\gamma_i} \right) g_i(\gamma_i) \chi_{f_i(I_i)}(\gamma)
		\]
		vanishes outside \( f_i(I_i) \). Therefore, we adopt the following more compact expression for the restriction of \( \func{F}_* \mu \) to the leaf \( \gamma \):
		\begin{equation}\label{yrtyer}
			(\func{F}_* \mu)|_\gamma = \sum_{i=1}^{+\infty} \func{F}_{\gamma_i *} \left( \mu|_{\gamma_i} \right) g_i(\gamma_i), \quad m\text{-a.e. } \gamma \in I,
		\end{equation}
		where each \( \gamma_i \) satisfies \( f(\gamma_i) = \gamma \).
	\end{remark}

	\begin{lemma}
		\label{niceformulaac}	Let $F:\Sigma \longrightarrow \Sigma$ ($F=(f,G)$) be a transformation, where $f$ satisfies Definition \ref{def3} and $G$ satisfies (H1). For every $\mu \in \mathcal{AB}$ and a stable leaf $%
		\gamma \in \mathcal{F}^{s}$, it holds 
		\begin{equation}
			||\func{F}_{\gamma \ast }\mu |_{\gamma }||_{W}\leq ||\mu |_{\gamma }||_{W},
			\label{weak1}
		\end{equation}%
		where $F_{\gamma }:K\longrightarrow K$ is defined in Proposition \ref%
		{niceformulaab} and $\func{F}_{\gamma \ast }$ is the associated pushforward
		map. Moreover, if $\mu$ is a probability measure on $K$, it holds 
		\begin{equation}
			||\func{F}_{\gamma \ast}^n\mu ||_{W}=||\mu ||_{W}=1,\ \ \forall \ \
			n\geq 1.  \label{simples}
		\end{equation}
	\end{lemma}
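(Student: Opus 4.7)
My plan is to exploit the dual formulation of the Wasserstein--Kantorovich-type norm together with the fiberwise contraction property granted by hypothesis (H1).

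For the first inequality, I first unpack the definition \eqref{ritiruwt}: for a leaf $\gamma = \{x\}\times K$, the map $F_\gamma \colon K \to K$ reduces to $F_\gamma(z) = G(x,z)$. Hypothesis (H1) then implies that $F_\gamma$ is Lipschitz with constant at most $\alpha \leq 1$ for $m$-almost every $\gamma$. Now fix any admissible test function $h \colon K \to \mathbb{R}$ with $L(h) \leq 1$ and $|h|_\infty \leq 1$. Since
\[
L(h \circ F_\gamma) \leq L(h)\, L(F_\gamma) \leq \alpha \leq 1 \quad \text{and} \quad |h \circ F_\gamma|_\infty \leq |h|_\infty \leq 1,
\]
the composition $h \circ F_\gamma$ remains admissible. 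The change-of-variables identity for pushforwards gives $\int_K h \, d(F_{\gamma*}\mu|_\gamma) = \int_K h \circ F_\gamma \, d\mu|_\gamma$, and applying this to both parts of the Jordan decomposition of $\mu|_\gamma$ (or equivalently using the dual characterization $\|\nu\|_W = \sup_h |\int h\,d\nu|$) I conclude
\[
\Bigl|\int_K h \, d(F_{\gamma*}\mu|_\gamma)\Bigr| = \Bigl|\int_K h\circ F_\gamma \, d\mu|_\gamma\Bigr| \leq \|\mu|_\gamma\|_W.
\]
Taking the supremum over admissible $h$ yields \eqref{weak1}.

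For the identity \eqref{simples}, I would first observe that the pushforward of a probability measure is again a probability measure, so by induction $F_{\gamma*}^n\mu$ is a probability measure on $K$ for each $n \geq 1$. It therefore suffices to prove that every probability measure $\nu$ on $K$ satisfies $\|\nu\|_W = 1$. The upper bound is immediate: $|\int h\,d\nu| \leq |h|_\infty \nu(K) \leq 1$ for every admissible $h$. For the lower bound, the constant function $h \equiv 1$ is admissible (Lipschitz constant $0$, sup norm $1$) and yields $\int h\,d\nu = 1$.

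There is no serious obstacle in this argument; the only point requiring mild care is handling the signed-measure setting in the first part, which is resolved either by splitting into Jordan parts (recalling Remark~\ref{ghtyhh} on the independence of $\mu|_\gamma$ from the choice of decomposition) or by using directly the dual formula for $\|\cdot\|_W$ via signed integration against scalar Lipschitz test functions.
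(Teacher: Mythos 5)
Your argument is correct. The key point for \eqref{weak1} is exactly the duality trick: $F_\gamma(z)=G(x,z)$ is $\alpha$-Lipschitz with $\alpha<1$ by (H1), so $h\circ F_\gamma$ is again an admissible test function whenever $L(h)\le 1$ and $|h|_\infty\le 1$, and the pushforward change-of-variables then yields the contraction; the second identity \eqref{simples} follows since pushforwards preserve total mass, combined with the observation that $\|\nu\|_W=1$ for any probability $\nu$ (upper bound from $|h|_\infty\le 1$, lower bound from $h\equiv 1$). Note that the present paper does not reproduce a proof of this lemma; it defers to \cite{L2} for the entire block of results in this section, so there is no in-text proof to compare against, but your reasoning is the standard and essentially unavoidable one for this statement and would match what appears there.
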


	\begin{proposition}[Weak contraction on $\mathcal{L}^1$]Let $F:\Sigma \longrightarrow \Sigma$ ($F=(f,G)$) be a transformation, where $f$ satisfies Definition \ref{def3} and $G$ satisfies (H1). If $\mu \in \mathcal{L}^{1}$, then $||\func{F}_{\ast }\mu ||_{1}\leq ||\mu ||_{1}$.
		\label{weakcontral11234}
	\end{proposition}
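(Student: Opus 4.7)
The plan is to unfold the definition of $\|\cdot\|_1$, substitute the fiberwise pushforward formula of Proposition~\ref{niceformulaab}, and then reduce everything to the fiberwise weak contraction of Lemma~\ref{niceformulaac} via a change of variables that absorbs the Jacobian factors $g_i$.

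First I would write
\[
\|\func{F}_*\mu\|_1 \;=\; \int_I \|(\func{F}_*\mu)|_\gamma\|_W\,dm(\gamma)
\]
and insert the compact formula \eqref{yrtyer} from Remark~\ref{çlkhgh},
\[
(\func{F}_*\mu)|_\gamma \;=\; \sum_{i=1}^{+\infty} \func{F}_{\gamma_i*}(\mu|_{\gamma_i})\,g_i(\gamma_i)\,\chi_{f_i(I_i)}(\gamma),
\]
where $\gamma_i = f_i^{-1}(\gamma)$ whenever $\gamma \in f_i(I_i)$. Since $\|\cdot\|_W = W_1(0,\cdot)$ is a seminorm on signed measures (immediate from its definition as a supremum of linear functionals on Lipschitz test functions) and each $g_i \geq 0$, the triangle inequality together with \eqref{weak1} of Lemma~\ref{niceformulaac} yields, for $m$-a.e.\ $\gamma \in I$,
\[
\|(\func{F}_*\mu)|_\gamma\|_W \;\leq\; \sum_{i=1}^{+\infty} g_i(\gamma_i)\,\|\mu|_{\gamma_i}\|_W\,\chi_{f_i(I_i)}(\gamma).
\]

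The decisive step is now to integrate this bound against $m$, exchange sum and integral via monotone convergence (all summands are nonnegative), and perform the change of variables $x = f_i^{-1}(\gamma)$ on each piece. Because $g_i(x) = 1/|f_i'(x)|$, this change of variables sends $dm(\gamma) = |f_i'(x)|\,dm(x)$, exactly cancelling the weight $g_i(\gamma_i)$ in the integrand. Using also that $m\bigl(I\setminus\bigcup_i I_i\bigr) = 0$ from Definition~\ref{def3}, I would conclude
\[
\|\func{F}_*\mu\|_1 \;\leq\; \sum_{i=1}^{+\infty} \int_{I_i} \|\mu|_x\|_W\,dm(x) \;=\; \int_I \|\mu|_x\|_W\,dm(x) \;=\; \|\mu\|_1,
\]
which is the desired inequality (and which simultaneously certifies $\func{F}_*\mu \in \mathcal{L}^1$).

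The argument is essentially routine given the preparatory results; the only delicate point worth verifying is that both \eqref{yrtyer} and \eqref{weak1} remain applicable under the weaker hypothesis of Definition~\ref{def3} (where only some iterate $f^{n_0}$ is piecewise convex or expanding) rather than under the stronger Definitions~\ref{def1} or~\ref{def2} at time one. This is precisely what the results from \cite{L2} cited at the beginning of Section~\ref{invt} (Corollary~2.19, Proposition~2.20, Remark~2.21) are designed to ensure, so no substantive obstacle arises beyond careful bookkeeping.
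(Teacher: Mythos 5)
Your proof is correct, and the argument is the natural one given the preparatory Proposition~\ref{niceformulaab} and Lemma~\ref{niceformulaac}: disintegrate, apply the fiberwise weak contraction and triangle inequality, swap sum and integral by monotone convergence, and cancel the Jacobian weight $g_i$ by the change of variables $\gamma\mapsto f_i^{-1}(\gamma)$. Note that the paper itself does not reproduce a proof for this proposition --- it simply cites \cite{L2} (see the paragraph opening Section~\ref{invt}) --- but your reconstruction matches the structure the surrounding lemmas clearly anticipate, and your closing remark about Definition~\ref{def3} is indeed already discharged by the hypotheses under which Proposition~\ref{niceformulaab} and Lemma~\ref{niceformulaac} are stated.
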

	
	\begin{proposition}[Lasota-Yorke inequality]
		Let $F:\Sigma \longrightarrow \Sigma$ ($F=(f,G)$) be a transformation, where $f$ satisfies Definition \ref{def3} and $G$ satisfies (H1). Then, for all $\mu \in S^{1}$, it holds%
		\begin{equation}
			||\func{F}_{\ast }^{n}\mu ||_{S^{1}}\leq R_2r_2 ^{n}||\mu
			||_{S^{1}}+(C_2 +1)||\mu ||_{1},\ \ \forall n\geq 1,  \label{xx}
		\end{equation}where the constants $R_2, r_2$ and $C_2$ are from Corollary 2.16 of \cite{L}.
		\label{lasotaoscilation2}
	\end{proposition}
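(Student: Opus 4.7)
The plan is to decompose $\|\func{F}_*^n \mu\|_{S^1} = |\phi_1^{(n)}|_v + \|\func{F}_*^n \mu\|_1$, where $\phi_1^{(n)}$ denotes the density of the $I$-marginal of $\func{F}_*^n \mu$, and to bound each summand separately. This reduces the problem to combining the weak contraction already proved in Proposition~\ref{weakcontral11234} with the one-dimensional Lasota--Yorke inequality for the Perron--Frobenius operator $\func{P}_f$ of $f$.

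For the $\|\cdot\|_1$ part, Proposition~\ref{weakcontral11234} immediately yields $\|\func{F}_*^n\mu\|_1 \leq \|\mu\|_1$. For the BV part, the skew-product structure $F=(f,G)$ gives the semi-conjugacy $\pi_1 \circ F = f \circ \pi_1$, so marginals commute with iteration:
\[
\pi_{1*}\bigl(\func{F}_*^n \mu\bigr) \;=\; f_*^n(\pi_{1*}\mu) \;=\; \bigl(\func{P}_f^n \phi_1\bigr)\,m,
\]
whence $\phi_1^{(n)} = \func{P}_f^n \phi_1$. Since $f$ satisfies Definition~\ref{def3}, $\func{P}_f$ admits a Lasota--Yorke inequality on $BV_m$ with the constants $R_2, r_2, C_2$ from Corollary~2.16 of \cite{L}:
\[
|\func{P}_f^n \phi_1|_v \;\leq\; R_2 r_2^n |\phi_1|_v + C_2 \|\phi_1\|_{L^1_m}.
\]

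To splice these together I still need the bound $\|\phi_1\|_{L^1_m} \leq \|\mu\|_1$. Testing the fiberwise Wasserstein seminorm $\|\mu|_\gamma\|_W = W_1(\mu^+|_\gamma, \mu^-|_\gamma)$ against the constant function $h \equiv 1$ (which has $|h|_\infty \leq 1$ and $L(h)=0$) gives, $m$-a.e.,
\[
|\phi_1(\gamma)| \;=\; \bigl|\mu^+|_\gamma(K) - \mu^-|_\gamma(K)\bigr| \;\leq\; \|\mu|_\gamma\|_W,
\]
and integration over $\gamma \in I$ yields $\|\phi_1\|_{L^1_m} \leq \|\mu\|_1$. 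Summing the two estimates produces
\[
\|\func{F}_*^n \mu\|_{S^1} \;\leq\; R_2 r_2^n |\phi_1|_v + C_2 \|\mu\|_1 + \|\mu\|_1 \;\leq\; R_2 r_2^n \|\mu\|_{S^1} + (C_2+1)\|\mu\|_1,
\]
as desired. No serious obstacle is anticipated: the proof is a clean transfer of the one-dimensional Lasota--Yorke estimate for $\func{P}_f$ to the skew product via the semi-conjugacy. The only slightly delicate point is the control $\|\phi_1\|_{L^1_m} \leq \|\mu\|_1$ for signed measures, where one must remember that $\phi_1$ stands for $\phi_1^+ - \phi_1^-$ and exploit the Jordan-decomposition form of $\|\cdot\|_W$.
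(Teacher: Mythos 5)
Your proof is correct and follows what is essentially the standard (and, as far as one can tell from the constants being imported from Corollary~2.16 of \cite{L}, the intended) argument: the paper itself does not reproduce a proof here, instead referring to \cite{L2} for it. Your splicing of the one-dimensional Lasota--Yorke estimate for $\func{P}_f$ on the marginal density with the weak contraction $\|\func{F}_*^n\mu\|_1 \leq \|\mu\|_1$ from Proposition~\ref{weakcontral11234}, linked by the observation that $\|\phi_1\|_{L^1_m}\leq\|\mu\|_1$ (via testing the fiberwise Wasserstein norm against $h\equiv 1$) and the semi-conjugacy identity $\pi_{1*}(\func{F}_*^n\mu)=(\func{P}_f^n\phi_1)\,m$, is exactly the mechanism that produces an inequality of the stated form with precisely those constants, so there is nothing to flag.
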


\begin{proposition}[Exponential convergence to equilibrium]
	\label{5.8} Let $F:\Sigma \longrightarrow \Sigma$ ($F=(f,G)$) be a transformation, where $f$ satisfies Definition \ref{def3} and $G$ satisfies (H1). There exist $D_{2}\in \mathbb{R}$ and $0<\beta _{1}<1$ such that
	for every signed measure $\mu \in \mathcal{V}_{s}$, it holds 
	\begin{equation*}
		||\func{F}_{\ast }^{n}\mu ||_{1}\leq D_{2}\beta _{1}^{n}||\mu ||_{S^{1}},
	\end{equation*}%
	for all $n\geq 1$. \label{quasiquasiquasi}
\end{proposition}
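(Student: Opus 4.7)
The plan is to combine the spectral gap of the one-dimensional Perron--Frobenius operator $P_f$ on $BV_m$ (guaranteed by Definition~\ref{def3}(4)) with the uniform fiber contraction from (H1) to produce exponential decay of $\|\func{F}_*^n \mu\|_1$ on the zero-mass subspace $\mathcal{V}_s$. I understand $\mathcal{V}_s$ as the closed subspace of $S^1$ consisting of signed measures with $\mu(\Sigma)=0$. For such $\mu$, the marginal density $\phi_1 \in BV_m$ satisfies $\int \phi_1 \, dm = 0$, so by the mixing assumption together with the standard Ionescu-Tulcea--Marinescu / Hennion machinery applied to $P_f$ on $BV_m$, there exist $C_1>0$ and $\lambda \in (0,1)$ with
\[
\|P_f^n \phi_1\|_{L^1(m)} \leq C_1 \lambda^n |\phi_1|_v .
\]

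The next step is to decompose each fiber measure as $\mu|_\gamma = \phi_1(\gamma)\nu_\gamma + \mu^0|_\gamma$, where $\nu_\gamma$ is an auxiliary probability on $K$ (for instance $\delta_{y_0}$ for a fixed base point $y_0$) and $\mu^0|_\gamma$ has zero total mass on $K$. By (H1) and Kantorovich--Rubinstein duality, the pushforward by the $n$-th fiber map is an $\alpha^n$-contraction in the $\|\cdot\|_W$ norm on the cone of zero-mass signed measures. Iterating the disintegration formula of Proposition~\ref{niceformulaab} and using the change of variables provided by the weights $g_i^{(n)}$, the zero-mass component of $\func{F}_*^n\mu$ contributes at most $\alpha^n \|\mu\|_1$ to $\|\func{F}_*^n\mu\|_1$. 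The remaining ``marginal'' contribution has fiber mass evolving exactly as $P_f^n \phi_1$ and is non-expanded in shape by Lemma~\ref{niceformulaac}, so it is bounded by $\|P_f^n\phi_1\|_{L^1(m)} \leq C_1 \lambda^n \|\mu\|_{S^1}$. Setting $\beta_1 := \max(\alpha,\lambda) < 1$ and applying the triangle inequality yields the stated bound with a suitable $D_2$.

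The main obstacle, I expect, is making the splitting $\mu|_\gamma = \phi_1(\gamma)\nu_\gamma + \mu^0|_\gamma$ fully rigorous: the choice of $\nu_\gamma$ must be measurable in $\gamma$ and must interact coherently with the sum over preimages in~\eqref{niceformulaa}, preserving the zero-mass property after pushforward while keeping the marginal component in $BV_m$. A clean way to sidestep these technicalities is to first verify the estimate on the dense subclass of product-type measures $\phi_1 \otimes \delta_{y_0}$, then use the Lasota--Yorke inequality of Proposition~\ref{lasotaoscilation2} together with the weak contraction of Proposition~\ref{weakcontral11234} to obtain quasi-compactness of $\func{F}_*$ on $S^1$, and finally apply a Hennion-type argument to transfer the exponential decay to all of $\mathcal{V}_s$ in the $\|\cdot\|_1$ norm.
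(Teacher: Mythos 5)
Your decomposition $\mu|_\gamma = \phi_1(\gamma)\nu_\gamma + \mu^0|_\gamma$ handles the zero-mass fiber part correctly (the $\alpha^n$-contraction of the fiber pushforward on zero-mass measures is valid here because $\diam(K)=1$, so one may subtract a constant from $h\circ F^n_\gamma$ to recover both the Lipschitz and the sup bound), but the bound you claim for the ``marginal'' part has a genuine gap, and it is conceptual rather than a matter of measurability. The measure
\[
\sum_{i} g_i^{(n)}(\gamma_i)\,\phi_1(\gamma_i)\,\delta_{F^n_{\gamma_i}(y_0)}
\]
has total mass $(P_f^n\phi_1)(\gamma)$, but its $\|\cdot\|_W$ norm is \emph{not} controlled by $|(P_f^n\phi_1)(\gamma)|$. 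The Diracs sit at $F^n_{\gamma_i}(y_0)$, and for distinct preimages $\gamma_i$ these points are in general spread across the fiber $K$ (each $F^n_{\gamma_i}(K)$ is a tiny set, but different branches land in disjoint tiny sets). A Lipschitz test function therefore sees no forced cancellation, and what Lemma~\ref{niceformulaac} actually gives is only
\[
\Big\|\sum_i g_i^{(n)}(\gamma_i)\phi_1(\gamma_i)\delta_{F^n_{\gamma_i}(y_0)}\Big\|_W \le \sum_i g_i^{(n)}(\gamma_i)|\phi_1(\gamma_i)| = \big(P_f^n|\phi_1|\big)(\gamma),
\]
whose $L^1(m)$ integral equals $\|\phi_1\|_{L^1(m)}$ and does not decay. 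So the asserted inequality $\int\|\text{marginal}\|_W\,dm\le \|P_f^n\phi_1\|_{L^1(m)}$ is not established, and is false without further structure. Note that this same gap is present for your product-type measures $\phi_1\otimes\delta_{y_0}$, so the fallback plan (verify on products, then quasi-compactness plus Hennion) inherits the problem rather than sidestepping it.

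What is missing is a step that uses the fiber contraction (H1) to \emph{cluster} the Dirac masses so that the one-dimensional spectral gap can act. Split $n = n_1 + n_2$ and group the preimages $\gamma_i \in f^{-n}(\gamma)$ according to the intermediate preimage $\gamma_j := f^{n_1}(\gamma_i) \in f^{-n_2}(\gamma)$. Within a group the maps factor as $F^n_{\gamma_i} = F^{n_2}_{\gamma_j}\circ F^{n_1}_{\gamma_i}$ with the same outer factor, which is an $\alpha^{n_2}$-contraction; hence all points $F^n_{\gamma_i}(y_0)$ with $i$ in the group lie within $\alpha^{n_2}$ of one another, and the $W$-norm of the in-group sum is at most $\big|\sum_{i\in j} g_i^{(n)}\phi_1(\gamma_i)\big| + \alpha^{n_2}\sum_{i\in j} g_i^{(n)}|\phi_1(\gamma_i)|$. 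Since $g_i^{(n)} = g_i^{(n_1)}\cdot g_j^{(n_2)}$, the signed in-group sum equals $g_j^{(n_2)}(\gamma_j)\,(P_f^{n_1}\phi_1)(\gamma_j)$. Summing over groups and integrating yields
\[
\int \|\text{marginal}\|_W\, dm \;\le\; \|P_f^{n_1}\phi_1\|_{L^1(m)} + \alpha^{n_2}\|\phi_1\|_{L^1(m)},
\]
and taking $n_1\approx n_2\approx n/2$ gives the exponential rate (with $\beta_1 = \max(\alpha,\lambda)^{1/2}$, not $\max(\alpha,\lambda)$ as you wrote). Without a splitting of this kind the argument does not close, and the estimate on the product measures is not ``clean'' — it requires exactly this device.
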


\subsubsection{Key Properties of $\func{F}_*$}\label{bdhfdf}

\begin{theorem}\label{gfhduer}	Let $F:\Sigma \longrightarrow \Sigma$, $F=(f,G)$, be a transformation, where $f$ satisfies Definition \ref{def3} and $G$ satisfies (H1). Then, it has a unique invariant probability in $S^{1}$.
\end{theorem}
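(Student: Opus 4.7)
The plan is to prove uniqueness first as a quick consequence of the exponential convergence to equilibrium, and then establish existence via a Krylov--Bogolyubov averaging procedure based on the Lasota--Yorke inequality.

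For \textbf{uniqueness}, suppose $\mu_1,\mu_2\in S^1$ are both $\func{F}_*$-invariant probabilities. Then $\mu:=\mu_1-\mu_2\in S^1$ has total mass zero, so it lies in the zero-average subspace $\mathcal{V}_s$ on which Proposition \ref{quasiquasiquasi} applies. Since $\func{F}_*\mu=\mu$ for all iterates, we obtain
$$\|\mu\|_1 \;=\; \|\func{F}_*^n\mu\|_1 \;\leq\; D_2\beta_1^n\|\mu\|_{S^1}\xrightarrow[n\to\infty]{}0,$$
so $\|\mu\|_1=0$. Because a signed measure in $\mathcal{AB}$ whose restriction to $m$-a.e.~stable leaf vanishes in the Wasserstein sense must be the zero measure, we conclude $\mu_1=\mu_2$.

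For \textbf{existence}, I would pick any initial probability $\mu_0\in S^1$; a natural choice is $\mu_0=(h_1m)\times\nu$, where $h_1\in BV_m$ is the density provided by Definition \ref{def3}(4) and $\nu$ is an arbitrary probability on $K$. This product lies in $S^1$ since its marginal density is $h_1\in BV_m$ and the path $\gamma\mapsto\mu_0|_\gamma=h_1(\gamma)\nu$ has finite $\|\cdot\|_1$-norm. Form the Cesaro averages
$$\mu_n \;:=\;\frac{1}{n}\sum_{k=0}^{n-1}\func{F}_*^k\mu_0.$$
By the Lasota--Yorke inequality (Proposition \ref{lasotaoscilation2}) together with the weak contraction $\|\func{F}_*\cdot\|_1\leq\|\cdot\|_1$ of Proposition \ref{weakcontral11234}, the sequence $\{\mu_n\}$ is uniformly bounded in $S^1$ and in $\|\cdot\|_1$. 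Invoking compactness of $S^1$-bounded sets in $(\mathcal{L}^1,\|\cdot\|_1)$ (a Helly-type selection statement, which I expect to be available from \cite{L2}), one extracts a subsequence $\mu_{n_k}\to\mu_*$ in $\|\cdot\|_1$. Invariance follows from the telescoping bound
$$\|\func{F}_*\mu_n-\mu_n\|_1 \;=\;\frac{1}{n}\bigl\|\func{F}_*^n\mu_0-\mu_0\bigr\|_1 \;\leq\;\frac{2}{n}\|\mu_0\|_1,$$
combined with $\|\cdot\|_1$-continuity of $\func{F}_*$ along the subsequence. Lower semi-continuity of $\|\cdot\|_{S^1}$ under $\|\cdot\|_1$-convergence (inherited from the analogous property of $BV_m$ under $L^1$-convergence and of the Wasserstein variation) then places $\mu_*$ in $S^1$. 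Since $\mu_*$ is a non-negative probability measure by construction, this gives the desired invariant probability.

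The main obstacle I anticipate is the compactness step: I need bounded subsets of $S^1$ to be relatively compact in $(\mathcal{L}^1,\|\cdot\|_1)$. This is the appropriate analogue of the classical compact embedding $BV\hookrightarrow L^1$, but now for paths of signed measures on $K$ valued in the Wasserstein--Kantorovich space. Its proof requires combining the scalar Helly theorem for the marginal density $\phi_1\in BV_m$ with a Helly-type selection for the map $\gamma\mapsto\mu|_\gamma$, a non-trivial step because disintegrations are defined only up to $m$-null sets and $(\mathcal{SM}(K),W_1)$ is not locally compact. If such a compactness result is already available in \cite{L2}, I would cite it directly; otherwise, the bulk of the proof effort goes into establishing it.
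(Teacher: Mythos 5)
Your proof is correct, and the structure — uniqueness from exponential convergence to equilibrium, existence from a Lasota--Yorke plus compactness argument — is the standard route in this line of work (the paper itself defers to \cite{L2} for this theorem). A few comments. The uniqueness half is exactly the expected argument and is airtight once one observes that $\|\cdot\|_1$ is a norm on $\mathcal{L}^1$, so $\|\mu_1-\mu_2\|_1=0$ forces $\mu_1=\mu_2$; your phrasing ``a signed measure in $\mathcal{AB}$ whose restriction to $m$-a.e.~leaf vanishes must be zero'' is precisely the content of $\|\cdot\|_1$ being a norm, which the paper asserts after Definition~\ref{sdfsdfsdasd}. For existence, the Helly-type compactness you invoke (relative compactness of $S^1$-bounded sets in $(\mathcal{L}^1,\|\cdot\|_1)$) is indeed established in \cite{L2} and its predecessors (\cite{GLu}, \cite{L}); the key structural point, which addresses your worry about local compactness of $(\mathcal{SM}(K),W_1)$, is that the relevant paths $\gamma\mapsto\mu|_\gamma$ take values in a set of measures with uniformly bounded total variation on a compact $K$, which is $W_1$-compact, so the Banach-valued Helly theorem applies. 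One extra small verification you gloss over is that $\|\cdot\|_1$-convergence carries positivity and total mass to the limit $\mu_*$; both are quick (use $h\equiv 1$ for mass, and a.e.\ $W$-convergence along a subsequence for positivity), but they deserve a sentence. Finally, note there is an alternative existence argument that bypasses the Helly compactness entirely: with $\tilde\mu:=(h_1 m)\times\nu$ one has, for any $m\geq 1$, that $\func{F}_*^m\tilde\mu-\tilde\mu\in\mathcal{V}_s$ with uniformly bounded $S^1$-norm (by the Lasota--Yorke inequality), so Proposition~\ref{quasiquasiquasi} gives
\begin{equation*}
\|\func{F}_*^{n+m}\tilde\mu-\func{F}_*^{n}\tilde\mu\|_1\;=\;\|\func{F}_*^{n}(\func{F}_*^{m}\tilde\mu-\tilde\mu)\|_1\;\leq\; D_2\beta_1^{n}\,\|\func{F}_*^{m}\tilde\mu-\tilde\mu\|_{S^1}\;\leq\; C\,\beta_1^{n},
\end{equation*}
showing that $\{\func{F}_*^{n}\tilde\mu\}_n$ is Cauchy in $(\mathcal{L}^1,\|\cdot\|_1)$; this trades the Helly compactness for completeness of $\mathcal{L}^1$ together with the same lower-semicontinuity of $\|\cdot\|_{S^1}$ you already use, and it also hands you convergence to the fixed point rather than only subsequential convergence.
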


\begin{theorem}[Spectral Gap for $\func{F}_*$ on $S^1$]
	\label{spgap}Let $F:\Sigma \longrightarrow \Sigma$ ($F=(f,G)$) be a transformation, where $f$ satisfies Definition \ref{def3} and $G$ satisfies (H1). Then, the operator $\func{F}_{\ast
	}:S^{1}\longrightarrow S^{1}$ can be written as 
	\begin{equation*}
		\func{F}_{\ast }=\func{P}+\func{N},
	\end{equation*}%
	where
	
	\begin{enumerate}
		\item[a)] $\func{P}$ is a projection, i.e., $\func{P}^2 = \func{P}$ and $\dim \operatorname{Im}(\func{P}) = 1$;

		\item[b)] there are $0\leq \lambda _0 <1$ and $U\geq0$ such that $\forall \mu \in S^1$ 
		\begin{equation*}
			||\func{N}^{n}(\mu )||_{S^{1}}\leq ||\mu||_{S^{1}} \lambda _0 ^{n}U;
		\end{equation*}
		
		\item[c)] $\func{P}\func{N}=\func{N}\func{P}=0$.
	\end{enumerate}
\end{theorem}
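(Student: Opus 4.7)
The plan is the standard one for extracting a spectral decomposition from a Lasota--Yorke inequality together with convergence to equilibrium in a weaker norm. Let $\mu_0 \in S^1$ denote the unique $\func{F}_*$-invariant probability guaranteed by Theorem~\ref{gfhduer}, and define
\[
\func{P}\mu := \mu(\Sigma)\,\mu_0, \qquad \func{N} := \func{F}_* - \func{P}.
\]
The functional $\mu\mapsto\mu(\Sigma)$ is bounded on $S^1$ because $|\mu(\Sigma)| \leq \|\phi_1\|_{L^1(m)} \leq |\phi_1|_v \leq \|\mu\|_{S^1}$, so $\func{P}$ is a bounded rank-one projection onto the span of $\mu_0$ (using $\mu_0(\Sigma)=1$), which gives (a). Since $\func{F}_*$ preserves total mass and fixes $\mu_0$, one has $\func{P}\func{F}_* = \func{P} = \func{F}_*\func{P}$, whence $\func{P}\func{N} = \func{P}\func{F}_* - \func{P}^2 = 0$ and $\func{N}\func{P} = \func{F}_*\func{P} - \func{P}^2 = 0$, establishing (c).

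The main content is (b). A short induction using $\func{F}_*\func{P} = \func{P}$ shows that for every $n\geq 1$,
\[
\func{N}^n\mu = \func{F}_*^n(\mu - \func{P}\mu),
\]
so it suffices to prove exponential decay of $\|\func{F}_*^n\nu\|_{S^1}$ for $\nu := \mu - \func{P}\mu$, a signed measure of zero total mass. I would then verify that such $\nu$ lie in the space $\mathcal{V}_s$ on which Proposition~\ref{5.8} is formulated, and invoke
\[
\|\func{F}_*^n\nu\|_1 \leq D_2\beta_1^n\|\nu\|_{S^1}.
\]
To upgrade this weak-norm decay into a strong-norm one, I combine it with Proposition~\ref{lasotaoscilation2}, which also yields a uniform bound $\|\func{F}_*^n\nu\|_{S^1} \leq (R_2 + C_2 + 1)\|\nu\|_{S^1}$. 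Splitting an even index as $2n = n+n$ and applying Lasota--Yorke to the outer iterate gives
\[
\|\func{F}_*^{2n}\nu\|_{S^1} \leq R_2 r_2^n\,\|\func{F}_*^n\nu\|_{S^1} + (C_2+1)\,\|\func{F}_*^n\nu\|_1 \leq C\bigl(r_2^n + \beta_1^n\bigr)\|\nu\|_{S^1}
\]
for a suitable constant $C>0$. Choosing $\lambda_0 := \sqrt{\max(r_2,\beta_1)} < 1$ delivers the desired decay along the even subsequence; odd indices are absorbed using one extra uniform application of $\func{F}_*$, and after adjusting $U$ this proves (b).

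The only subtle point is the identification of the space $\mathcal{V}_s$ appearing in Proposition~\ref{5.8} with the subspace of zero-average signed measures in $S^1$, so that $\nu = \mu - \func{P}\mu$ is an admissible input to the convergence-to-equilibrium estimate. Once this identification is justified via the constructions in Section~\ref{jdfjdhkjf}, everything else is a mechanical assembly of the \emph{a priori} estimates already at our disposal, and no new analytic input is needed beyond Propositions~\ref{weakcontral11234}, \ref{lasotaoscilation2}, \ref{5.8}, and Theorem~\ref{gfhduer}.
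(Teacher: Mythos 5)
The paper does not actually contain its own proof of Theorem~\ref{spgap}; the text explicitly states that the results of Section~\ref{bdhfdf} are taken from~\cite{L2}, so there is no in-paper argument against which to compare. That said, your proposal is correct and follows exactly the route the paper's preparatory material (Propositions~\ref{weakcontral11234}, \ref{lasotaoscilation2}, \ref{5.8} and Theorem~\ref{gfhduer}) is set up to support: define $\func{P}\mu=\mu(\Sigma)\mu_0$, observe $\func{N}^n\mu=\func{F}_*^n(\mu-\func{P}\mu)$ acts on the zero-mass subspace $\mathcal{V}_s$, and bootstrap the weak-norm exponential decay into strong-norm decay through the Lasota--Yorke inequality by splitting $2n=n+n$. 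The small bookkeeping items you leave implicit (that $\mu_0(\Sigma)=1$ gives $\func{P}^2=\func{P}$; that $\|\nu\|_{S^1}\leq(1+\|\mu_0\|_{S^1})\|\mu\|_{S^1}$ so the final constant $U$ absorbs this; and that one must take $U\geq 1$ so the case $n=0$ holds) all check out, so the argument is sound.
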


\section{Deterministic and Admissible Perturbations}\label{76235ghjdf}

In many dynamical systems of interest, the exact evolution rule is not fixed: small variations may occur due to changes in parameters, numerical approximations, or modeling choices. 
A central question is how such perturbations affect the statistical and ergodic properties of the system, in particular the existence, uniqueness, and stability of invariant measures. 
To address this, we introduce a general framework for describing and quantifying perturbations of the unperturbed map $F$.  
We focus on families of maps $\{F_\delta\}_{\delta \in [0,1)}$ depending on a small parameter $\delta$, where $\delta = 0$ corresponds to the original, unperturbed system. 
Our aim is to formalize the notion of \emph{$R(\delta)$-perturbation} and to distinguish those perturbations that preserve key functional-analytic properties needed for our statistical stability results.

\subsection{Deterministic Perturbations}

We first consider perturbations that are \emph{deterministic}, meaning that each value of the perturbation parameter $\delta$ determines a specific map $F_\delta$, without introducing any additional randomness.  
Such perturbations can arise, for example, from a smooth variation of parameters in a family of maps, or from a deterministic numerical scheme that approximates the original system.  

Our definition of deterministic $R(\delta)$-perturbation imposes precise quantitative conditions on how the perturbed system differs from the original one.  
These conditions involve:  
(i) preserving the underlying partition structure of the system,  
(ii) controlling the variation of inverse branches and derivatives in terms of a function $R(\delta)$, and  
(iii) ensuring that the perturbed system admits an invariant measure equivalent to the original one.  

In addition, we define the subclass of \emph{admissible} $R(\delta)$-perturbations, which are those satisfying further regularity and contraction properties.  
These properties guarantee the applicability of the functional-analytic techniques developed earlier in the paper, ensuring uniform control of the associated transfer operators and stable statistical behavior under perturbation.

\begin{definition}\label{rdelta}
	We say that a one-parameter family of measurable maps $\{F_\delta\}_{\delta \in [0,1)}$, with $F_0 = F$, is a \textbf{$R(\delta)$-perturbation of $F$} if there exists $\delta_0 \in (0,1)$ such that the following conditions hold:
	
	\begin{enumerate}
		\item [(U1)] For all $\delta \in [0,\delta_0)$, $f_\delta$ satisfies Definition \ref{def3} for the same partition $\mathcal{P}$.
		
		\item [(U2)] For every $\gamma \in I$ and for all $i \geq 1$, denote by $\gamma_{\delta,i}$ the $i$-th pre-image of $\gamma$ under $f_\delta$. Suppose there exists a function $\delta \mapsto R(\delta) \in \mathbb{R}^+$ such that
		\[
		\lim_{\delta \to 0^+} R(\delta) \log(\delta) = 0,
		\] 
		and the following conditions hold:
		\begin{enumerate}
			\item [(U2.1)] 
			\[
			\sum_{i=1}^{\infty} \left| \frac{1}{f_\delta'(\gamma_{\delta,i})} - \frac{1}{f_0'(\gamma_{0,i})} \right| \leq R(\delta), \quad \forall \delta \in [0,\delta_0);
			\]
			\item [(U2.2)] 
			\[
			\esssup_\gamma \max_{i \geq 1} d_1(\gamma_{0,i}, \gamma_{\delta,i}) \leq R(\delta), \quad \forall \delta \in [0,\delta_0);
			\]
			\item [(U2.3)] For all $\delta \in [0,\delta_0)$, $G_0$ and $G_\delta$ are $R(\delta)$-close in the sup norm:
			\[
			d_2(G_0(x,y), G_\delta(x,y)) \leq R(\delta), \quad \forall (x,y) \in I \times K.
			\]
		\end{enumerate}

	\end{enumerate}
\end{definition}

\begin{definition}\label{ad}
	We say that an $R(\delta)$-perturbation is \textbf{admissible} if it satisfies the following conditions:
	\begin{enumerate}
		\item[(A1)] There exist constants $D > 0$ and $0 < \lambda < 1$ such that, for all $h \in BV_m$, all $\delta \in [0,\delta_0)$, and all $n \geq 1$, the following inequality holds:
		\[
		|\operatorname{P}_{f_\delta}^{n} h|_{v} \leq D \lambda^n |h|_{v} + D |h|_{1},
		\]
		where $\operatorname{P}_{f_\delta}$ denotes the Frobenius–Perron operator of $f_\delta$.
		
		\item[(A2)] For all $\delta \in [0,\delta_0)$, let $\alpha_\delta$ be the contraction rate given by Equation~\eqref{contracting1}, and let $|G_\delta|_{\lip}$ be the Lipschitz constant of $G_\delta$ as defined in Equation~\eqref{jdhfjdh}. Suppose that $G_\delta$ satisfies (H3) for all $\delta$ with
		\[
		\alpha_{4,\delta} := \alpha_\delta^k \, \esssup \frac{1}{|(f_\delta^k)'|},
		\]
		and
		\[
		U_{4,\delta} := |G_\delta|_{\lip} \cdot \esssup \frac{1}{|(f_\delta^k)'|}
		+ V_{I} \!\left( \frac{1}{|(f_\delta^k)'|} \right),
		\]
		where the essential supremum is taken with respect to the measure $m$. Assume that
		\[
		\sup_{\delta} \alpha_{4,\delta} < 1
		\quad\text{and}\quad
		\sup_{\delta} U_{4,\delta} < \infty.
		\]
	\end{enumerate}
\end{definition}

\begin{lemma}\label{nslfdflsdjlf}
	Let $\{F_{\delta }\}_{\delta \in [0,1)}$ an admissible $R(\delta)$-perturbation and $\gamma_{\delta, i}$ the $i$-th pre-image of $\gamma \in M$ by $f_\delta$, $i=1, \cdots$. Then, for all probability $\mu \in \mathcal{BV}_m$, the following inequality holds: $$\left\vert \left\vert {\func{F}_{0,\gamma _{0,i} }{_\ast }}\mu |_{\gamma _{0,i}}- \func{F}_{\delta,\gamma _{\delta,i} }{_\ast }\mu |_{\gamma _{0,i}}\right\vert \right\vert _{W} \leq R(\delta)( 1 + |G|_{\lip})||\mu |_{\gamma _{0,i}} ||_W  , \forall i=1, \cdots$$where $F_{\delta,\gamma _{\delta,i}}$ is defined by Equation (\ref{ritiruwt}), for all $\delta \in [0,1)$.
\end{lemma}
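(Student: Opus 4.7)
The plan is to reduce the claim to a pointwise comparison of the two fiber maps and then integrate against the (positive) fiber measure $\mu|_{\gamma_{0,i}}$, whose total mass coincides with its Wasserstein norm. Throughout, write $x_{0,i}$ and $x_{\delta,i}$ for the base points of the fibers $\gamma_{0,i}$ and $\gamma_{\delta,i}$ respectively, so that by Equation~\eqref{ritiruwt} one has $F_{0,\gamma_{0,i}}(y)=G_0(x_{0,i},y)$ and $F_{\delta,\gamma_{\delta,i}}(y)=G_\delta(x_{\delta,i},y)$ for every $y\in K$.

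First, I would use the dual formulation of $\|\cdot\|_W$ from Definition~\ref{wasserstein} to rewrite
\[
\bigl\|\func{F}_{0,\gamma_{0,i}*}\mu|_{\gamma_{0,i}}-\func{F}_{\delta,\gamma_{\delta,i}*}\mu|_{\gamma_{0,i}}\bigr\|_W
=\sup_{\substack{L(h)\le 1\\|h|_\infty\le 1}}\left|\int_K\bigl(h\circ F_{0,\gamma_{0,i}}-h\circ F_{\delta,\gamma_{\delta,i}}\bigr)\,d\mu|_{\gamma_{0,i}}\right|,
\]
so it is enough to bound $|h\circ F_{0,\gamma_{0,i}}(y)-h\circ F_{\delta,\gamma_{\delta,i}}(y)|$ uniformly in $y$ for any admissible test function $h$.

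Second, for a fixed such $h$, insert $h(G_0(x_{\delta,i},y))$ and apply the triangle inequality together with the Lipschitz bound $L(h)\le 1$:
\[
|h(G_0(x_{0,i},y))-h(G_\delta(x_{\delta,i},y))|\le d_2(G_0(x_{0,i},y),G_0(x_{\delta,i},y))+d_2(G_0(x_{\delta,i},y),G_\delta(x_{\delta,i},y)).
\]
Condition (U1) guarantees that $f_0$ and $f_\delta$ share the partition $\mathcal{P}$, so $x_{0,i}$ and $x_{\delta,i}$ both lie in the atom $I_i$; therefore (H2) combined with (U2.2) gives $d_2(G_0(x_{0,i},y),G_0(x_{\delta,i},y))\le |G|_{\lip}\,d_1(x_{0,i},x_{\delta,i})\le |G|_{\lip}\,R(\delta)$, while (U2.3) yields $d_2(G_0(x_{\delta,i},y),G_\delta(x_{\delta,i},y))\le R(\delta)$. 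Summing, one obtains the uniform pointwise bound
\[
|h\circ F_{0,\gamma_{0,i}}(y)-h\circ F_{\delta,\gamma_{\delta,i}}(y)|\le R(\delta)(1+|G|_{\lip})\qquad\forall y\in K.
\]

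Finally, since $\mu$ is a probability measure, the fiber measure $\mu|_{\gamma_{0,i}}=\pi_{\gamma_{0,i},2*}(\phi_1(\gamma_{0,i})\mu_{\gamma_{0,i}})$ is positive, so $\|\mu|_{\gamma_{0,i}}\|_W=\mu|_{\gamma_{0,i}}(K)$ (attained by the constant test function $h\equiv 1$). Integrating the pointwise estimate against this positive measure gives
\[
\left|\int_K\bigl(h\circ F_{0,\gamma_{0,i}}-h\circ F_{\delta,\gamma_{\delta,i}}\bigr)\,d\mu|_{\gamma_{0,i}}\right|\le R(\delta)(1+|G|_{\lip})\,\mu|_{\gamma_{0,i}}(K)=R(\delta)(1+|G|_{\lip})\|\mu|_{\gamma_{0,i}}\|_W,
\]
and taking the supremum over admissible $h$ finishes the argument. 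The only substantive point is the bookkeeping in Step~2: one must ensure that $x_{0,i}$ and $x_{\delta,i}$ belong to the same atom of $\mathcal{P}$ before invoking (H2), which is exactly what (U1) provides; everything else is a routine application of duality and the triangle inequality.
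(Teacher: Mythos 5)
Your proof is correct and follows essentially the same route as the paper's: dualize via $W_1$, use the change of variables $\int h\, d(F_*\mu) = \int h\circ F\, d\mu$, insert the intermediate term $G_0(\gamma_{\delta,i},y)$ and split by the triangle inequality, then bound the two pieces by (U2.2) combined with (H2) and by (U2.3), finally integrating the uniform pointwise estimate against the positive fiber measure. You are somewhat more careful than the paper in explicitly flagging that (U1) is what ensures $\gamma_{0,i}$ and $\gamma_{\delta,i}$ lie in the same atom $I_i$, which is indeed required before (H2) can be invoked since $G$ may be discontinuous across atom boundaries.
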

\begin{proof}
	To simplify the notation, we denote $F:=F_0$ and $\gamma:=\gamma_{0,i}$. Thus, we have to estimate $\left\vert \left\vert {\func{F}_{\gamma }{_\ast }}\mu |_{\gamma}- \func{F}_{\delta,\gamma _{\delta,i} }{_\ast }\mu |_{\gamma}\right\vert \right\vert _{W}$. To do it, we consider a Lipschitz function $h$ such that $L(h) \leq 1$ and $|h|_\infty \leq 1$. Thus,
	
\begin{eqnarray*}
\left| \int h (y) d (\func{F}_{\gamma }{_\ast }\mu |_{\gamma})(y)  - \int h(y) d (\func{F}_{\delta, \gamma _{\delta,i}{_\ast }}\mu |_{\gamma} )(y) \right| 
&=& \left| \int h \circ F_{\gamma } (y) d \mu |_{\gamma} (y)- \int h \circ F_{\delta, \gamma _{\delta,i}}(y) d \mu |_{\gamma}(y)\right|
\\&\leq&  \int \left| h \circ F_{\gamma }(y)  d \mu |_{\gamma} (y)- h \circ F_{\delta, \gamma _{\delta,i}}(y)\right| d \mu |_{\gamma}(y)
\\&\leq&  \int \left| h \circ G(\gamma, y)  d \mu |_{\gamma}(y) - h \circ G_\delta(\gamma _{\delta,i},y) \right| d \mu |_{\gamma}(y)
\\&\leq& \int \left|  G(\gamma, y)d\mu|_{\gamma}(y) - G_\delta(\gamma _{\delta,i},y) \right| d \mu |_{\gamma}(y).
\end{eqnarray*}Now note that, 

\begin{eqnarray*}
\left|  G(\gamma, y) - G_\delta(\gamma _{\delta,i},y) \right| 
&\leq& \left|  G(\gamma, y) - G(\gamma _{\delta,i},y) \right| + \left|  G(\gamma _{\delta,i}, y) - G_\delta(\gamma _{\delta,i},y) \right|
\\&\leq& |G|_{\lip}d_1(\gamma, \gamma _{\delta,i}) + \left|  G(\gamma _{\delta,i}, y) - G(\gamma _{\delta,i},y) \right|
\\&\leq& (|G|_{\lip} + 1)R(\delta).
\end{eqnarray*}Thus,

\begin{eqnarray*}
\left| \int h (y) d (\func{F}_{\gamma }{_\ast }\mu |_{\gamma})(y)  - \int h(y) d (\func{F}_{\delta, \gamma _{\delta,i}{_\ast }}\mu |_{\gamma} )(y) \right| &\leq& (|G|_{\lip} + 1)R(\delta) \int 1 d\mu|_\gamma 
\\ &\leq & (|G|_{\lip} + 1)R(\delta) ||\mu|_\gamma||_W.
\end{eqnarray*}Taking the supremum over the Lipschitz fucntions $h$ such that $L(h)\leq 1$ and $|h|_\infty \leq 1$ we finish the proof.
\end{proof}
For the next, proposition and henceforth, for a given path $\Gamma _\mu ^\omega \in \Gamma_{ \mu }$ (associated with the disintegration $\omega = (\{\mu _\gamma\}_\gamma, \phi _1)$, of $\mu$), unless written otherwise, we consider the particular path $\Gamma_{\func{F_*}\mu} ^\omega \in \Gamma_{\func{F_*}\mu}$ defined in Remark \ref{çlkhgh} by the expression

\begin{equation}
	\Gamma_{\func{F_*}\mu} ^\omega (\gamma)=\sum_{i=1}^{\infty}{\func{F}%
		_{\gamma _i \ast }\Gamma _\mu ^\omega (\gamma_i)g_i(\gamma _i)}\ \ m%
	\mathnormal{-a.e.}\ \ \gamma \in I.  \label{niceformulaaareer}
\end{equation}Recall that $\Gamma_{\mu} ^\omega (\gamma) = \mu|_\gamma:= \pi_{2*}(\phi_{1}(\gamma)\mu _\gamma)$ and in particular $\Gamma_{\func{F_*}\mu} ^\omega (\gamma) = (\func{F_*}\mu)|_\gamma = \pi_{2*}(\func{P}_f\phi_1(\gamma)\mu _\gamma)$, where $\phi_1 = \dfrac{d \pi _{1*} \mu}{dm}$ and $\func{P}_f$ is the Perron-Frobenius operator of $f$.
\begin{corollary}\label{kjdfhkkhfdjfht}
	Let $F:\Sigma \longrightarrow \Sigma$ ($F=(f,G)$) be a transformation, where $f$ satisfies Definition \ref{def3} and $F$ satisfies (H3). Then, for all positive measures $\mu \in \mathcal{BV}_m$, such that $\phi_1$ is constant $m$-a.e., it holds ($\overline{F}:=F^k$) 
	\begin{equation}\label{erkjwr166}
		V(\Gamma_{\func{\overline{F}}_*^n \mu} ^\omega)  \leq \alpha_4^n V(\Gamma_{\mu}^\omega) + \dfrac{U_4}{1-\alpha_4}||\mu||_1,
	\end{equation}
	for all $n\geq 1$, where $\alpha_4:=\alpha^k \esssup \frac{1}{|(f^k)'|}$ and $U_4 = |G|_{\lip} \cdot \esssup \frac{1}{|(f^k)'|}  + V_{I{_\omega}}(\frac{1}{|(f^k)'|})$ and the essential supremum is taken with respect to the measure $m$.
\end{corollary}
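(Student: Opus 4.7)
My plan is to prove a one-step Lasota--Yorke-type inequality for the variation seminorm $V(\cdot)$ on $\mathcal{BV}_m$, namely
\[
V\bigl(\Gamma_{\overline{F}_\ast\mu}^{\omega}\bigr) \;\le\; \alpha_4\,V\bigl(\Gamma_\mu^{\omega}\bigr) + U_4\,\|\mu\|_1,
\]
and then to iterate it using the weak contraction of $\overline{F}_\ast$ on $\|\cdot\|_1$ supplied by Proposition~\ref{weakcontral11234}.

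For the base case, I start from the explicit formula~\eqref{niceformulaaareer} applied to $\overline{F}=F^k$. I fix a finite increasing sequence $\{x_j\}\subset I_\omega$ refining the Markov partition of $\overline{f}=f^k$ so that each consecutive pair $(x_j,x_{j+1})$ lies inside a single image $\overline{f}_i(I_i)$, and I write $\gamma_i=\overline{f}_i^{-1}(x_j)$, $\gamma_i'=\overline{f}_i^{-1}(x_{j+1})$. The key computational device is the telescopic decomposition
\begin{align*}
\func{F}_{\gamma_i\ast}\Gamma_\mu^{\omega}(\gamma_i)\,g_i(\gamma_i) - \func{F}_{\gamma_i'\ast}\Gamma_\mu^{\omega}(\gamma_i')\,g_i(\gamma_i') &= g_i(\gamma_i)\,\bigl[\func{F}_{\gamma_i\ast}-\func{F}_{\gamma_i'\ast}\bigr]\Gamma_\mu^{\omega}(\gamma_i) \\
&\quad + g_i(\gamma_i)\,\func{F}_{\gamma_i'\ast}\bigl[\Gamma_\mu^{\omega}(\gamma_i)-\Gamma_\mu^{\omega}(\gamma_i')\bigr] \\
&\quad + \bigl[g_i(\gamma_i)-g_i(\gamma_i')\bigr]\,\func{F}_{\gamma_i'\ast}\Gamma_\mu^{\omega}(\gamma_i').
\end{align*}
The first term is controlled by exactly the computation of Lemma~\ref{nslfdflsdjlf} applied with $\gamma_i,\gamma_i'$ in place of the two base points, giving a bound $|G|_{\lip}\,d_1(\gamma_i,\gamma_i')\,\phi_1$. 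The third term is bounded by $|g_i(\gamma_i)-g_i(\gamma_i')|\,\phi_1$ via Lemma~\ref{niceformulaac}. For the second term, the hypothesis that $\phi_1$ is $m$-a.e.\ constant forces $\Gamma_\mu^{\omega}(\gamma_i)-\Gamma_\mu^{\omega}(\gamma_i')$ to be a signed measure of zero total mass on $K$; combined with the $\alpha^k$-contraction of the fiber map $F_{\gamma_i'}$ (condition~(H3) for $\overline{F}=F^k$) this produces the multiplicative factor $\alpha^k$ in the $W$-norm. Summing over $i$, using $d_1(\gamma_i,\gamma_i')\le(\esssup g_i)\,d_1(x_j,x_{j+1})$, then summing over $j$ and taking the supremum over refining partitions, the three contributions assemble respectively into $\alpha^k\,\esssup(1/|(f^k)'|)\,V(\Gamma_\mu^{\omega})=\alpha_4\,V(\Gamma_\mu^{\omega})$ and $\bigl(|G|_{\lip}\,\esssup(1/|(f^k)'|)+V_{I_\omega}(1/|(f^k)'|)\bigr)\,\|\mu\|_1 = U_4\,\|\mu\|_1$, where I identify the constant density with $\|\mu\|_1$.

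The iteration is then a straightforward induction: the one-step bound applied to $\overline{F}_\ast^{n-1}\mu$, together with the non-expansion $\|\overline{F}_\ast^{n-1}\mu\|_1\le\|\mu\|_1$ of Proposition~\ref{weakcontral11234}, gives
\[
V\bigl(\Gamma_{\overline{F}_\ast^n\mu}^{\omega}\bigr) \le \alpha_4\,V\bigl(\Gamma_{\overline{F}_\ast^{n-1}\mu}^{\omega}\bigr) + U_4\,\|\mu\|_1,
\]
which unrolls to
\[
V\bigl(\Gamma_{\overline{F}_\ast^n\mu}^{\omega}\bigr) \le \alpha_4^n\,V(\Gamma_\mu^{\omega}) + U_4\,\|\mu\|_1\sum_{j=0}^{n-1}\alpha_4^{\,j} \le \alpha_4^n\,V(\Gamma_\mu^{\omega}) + \frac{U_4}{1-\alpha_4}\,\|\mu\|_1,
\]
the geometric sum being justified by $\alpha_4<1$ from~(H3).

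The main obstacle is extracting the contraction factor $\alpha^k$ in the second telescopic term: Lemma~\ref{niceformulaac} supplies only non-expansion in the $W$-norm, so the gain must be produced by an explicit test-function argument exploiting the Lipschitz constant of $F_{\gamma_i'}$ together with the zero-mass cancellation afforded by the constant-density hypothesis. A secondary technical point is that the intermediate measures $\overline{F}_\ast^{j}\mu$ do not, in general, retain the constant-density property, so one needs either to verify that the one-step inequality still applies to them directly, or to refine the base case so as to absorb a non-constant-density error term through the $|\phi_1|_v$-component of the $S^1$-norm, using the Lasota--Yorke estimate of Proposition~\ref{lasotaoscilation2} to keep it uniformly bounded along iterates.
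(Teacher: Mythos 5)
The paper does not print a proof of this corollary (it invokes \cite{L2}), so I assess the proposal on its own logic. Your telescopic decomposition and the treatment of the three resulting terms for the base case $n=1$ are essentially correct: the first term is controlled by a Lemma~\ref{nslfdflsdjlf}-style estimate and contributes $|G|_{\lip}\esssup\bigl(1/|(f^k)'|\bigr)\,\|\mu\|_1$; the third term contributes $V_{I_\omega}\bigl(1/|(f^k)'|\bigr)\,\|\mu\|_1$; and the second term acquires the factor $\alpha^k$ precisely because, under the constant-density hypothesis, $\Gamma_\mu^\omega(\gamma_i)-\Gamma_\mu^\omega(\gamma_i')$ has zero total mass on $K$, which lets you recenter the test function before invoking the fiberwise Lipschitz constant $\alpha^k$ of $F_{\gamma_i'}$.

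The genuine gap is in the iteration, and your own closing paragraph flags it without resolving it. The zero-mass cancellation is not a ``secondary technical point'' --- it is the only place $\alpha^k$ enters, and it is exactly the step that breaks for $\overline{F}_\ast^{j}\mu$ when $j\ge 1$. Indeed, $\pi_{1\ast}\overline{F}_\ast\mu$ has density $\func{P}_{f^k}\phi_1$, and if $\phi_1\equiv c$ then $\func{P}_{f^k}\phi_1 = c\cdot\func{P}_{f^k}(1)$, which is \emph{not} constant unless $m$ is $f^k$-invariant (it is not constant, e.g., for the Gauss map). Consequently the displayed one-step bound $V\bigl(\Gamma_{\overline{F}_\ast\mu}^{\omega}\bigr)\le\alpha_4 V\bigl(\Gamma_\mu^{\omega}\bigr)+U_4\|\mu\|_1$, which you proved only under the constant-density hypothesis, cannot be applied to $\overline{F}_\ast^{n-1}\mu$, and the induction as written is invalid. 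Without the zero-mass cancellation, the second telescopic term yields an additional contribution proportional to $|\phi_1(\gamma_i)-\phi_1(\gamma_i')|$, so the honest one-step inequality for general density has the form
\begin{equation*}
V\bigl(\Gamma_{\overline{F}_\ast\mu}^{\omega}\bigr)\le\alpha_4\,V\bigl(\Gamma_\mu^{\omega}\bigr)+C_1\,V(\phi_1)+C_2\,\|\mu\|_1 .
\end{equation*}
To complete your argument you must prove this strengthened one-step estimate, then control $V\bigl(\func{P}_{f^k}^{\,j}\phi_1\bigr)$ uniformly in $j$ by the BV Lasota--Yorke inequality (condition (A1), or Proposition~\ref{lasotaoscilation2}): since the initial $\phi_1$ is constant, $|\phi_1|_v=\|\mu\|_1$ and hence $|\func{P}_{f^k}^{\,j}\phi_1|_v\le D(\lambda^j+1)\|\mu\|_1\le 2D\|\mu\|_1$, after which the geometric sum closes and yields a bound of the stated form $\alpha_4^n V\bigl(\Gamma_\mu^{\omega}\bigr)+\tfrac{U_4'}{1-\alpha_4}\|\mu\|_1$. (Whether $U_4'$ equals the particular $U_4$ in the statement or only a comparable constant is a separate bookkeeping question; the paper, following \cite{L2}, may be quoting a slightly different form of the estimate.)
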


\begin{remark}\label{riirorpdf}
	Let $\nu$ be the probability measure defined as the product $\nu = m \times \nu_2$, where $m$ denotes the Lebesgue measure on $I$ and $\nu_2$ is a fixed probability measure on $K$. Consider its trivial disintegration $\omega_0 = (\{\nu_{\gamma}\}_{\gamma}, \phi_1)$, where $\nu_{\gamma} = \func{\pi_{2,\gamma}^{-1}{_*}}\nu_2$ for all $\gamma$ and $\phi_1 \equiv 1$. By this definition, we have
	\begin{equation*}
		\nu|_{\gamma} = \nu_2, \quad \forall\, \gamma.
	\end{equation*}
	In other words, the path $\Gamma^{\omega_0}_{\nu}$ is constant: $\Gamma^{\omega_0}_{\nu}(\gamma) = \nu_2$ for all $\gamma$. Hence, $\nu \in \mathcal{BV}_m$. Moreover, for each $n \in \mathbb{N}$, let $\omega_n$ be the disintegration of the measure $\func{\overline{F}}_*^n \nu$ obtained from $\omega_0$ by applying Proposition~\ref{niceformulaab}. Then, by a simple induction, we can consider the path $\Gamma^{\omega_n}_{\func{\overline{F}}_*^n \nu}$ associated with $\omega_n$. This path will be used in the proof of Theorem \ref{thshgf}.
\end{remark}

\begin{lemma}\label{UF2ass} 
	Let $\{F_\delta \}_{\delta \in [0,1)}$ be an admissible $R(\delta)$-perturbation. Denote by $\func{F_\delta{_\ast}}$ their transfer operators, and by $\mu_{\delta}$ their fixed points (probability measures) on $S^1$. Suppose that the family $\{\mu_{\delta}\}_{\delta \in [0,1)}$ satisfies 
	\[
	V(\mu_{\delta}) \leq B_u, \quad \forall\, \delta \in [0,\delta_0).
	\]
	Then there exists a constant $C_{1}$ such that
	\[
	\|(\func{F_0{_\ast}}-\func{F_\delta{_\ast}})\mu_{\delta}\|_{1} \leq C_{1} R(\delta),
	\]
	for all $\delta \in [0,\delta_0)$, where $C_1 := |G_0|_{\lip} + 3B_u + 2$.
\end{lemma}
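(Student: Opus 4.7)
The plan is to decompose $(\func{F_0{_\ast}}-\func{F_\delta{_\ast}})\mu_\delta$ fiberwise using the transfer-operator formula of Proposition~\ref{niceformulaab} and Remark~\ref{çlkhgh}, namely
\[
(\func{F_{\varepsilon\ast}}\mu)|_\gamma=\sum_{i\ge1}\func{F}_{\varepsilon,\gamma_{\varepsilon,i}\ast}(\mu|_{\gamma_{\varepsilon,i}})\,g_{\varepsilon,i}(\gamma_{\varepsilon,i}),\qquad \varepsilon\in\{0,\delta\},
\]
where $\gamma_{\varepsilon,i}$ is the $i$-th pre-image of $\gamma$ under $f_\varepsilon$ and $g_{\varepsilon,i}=1/|f_\varepsilon'|$. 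Inserting two telescoping terms, I split each branch-$i$ contribution into (I) a change of pushforward $\func{F}_{0,\gamma_{0,i}\ast}\!\to\!\func{F}_{\delta,\gamma_{\delta,i}\ast}$ applied to $\mu_\delta|_{\gamma_{0,i}}$, scaled by $g_{0,i}(\gamma_{0,i})$; (II) a change of input measure $\mu_\delta|_{\gamma_{0,i}}\!\to\!\mu_\delta|_{\gamma_{\delta,i}}$ under $\func{F}_{\delta,\gamma_{\delta,i}\ast}$, scaled by $g_{0,i}(\gamma_{0,i})$; and (III) a change of weight $g_{0,i}(\gamma_{0,i})\!\to\!g_{\delta,i}(\gamma_{\delta,i})$ multiplying $\func{F}_{\delta,\gamma_{\delta,i}\ast}(\mu_\delta|_{\gamma_{\delta,i}})$. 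I then estimate each part in $\|\cdot\|_1$.

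For (I), Lemma~\ref{nslfdflsdjlf} gives the pointwise bound $R(\delta)(1+|G_0|_{\lip})\,\|\mu_\delta|_{\gamma_{0,i}}\|_W$; summing over $i$, integrating in $\gamma$, and performing the change of variables $u=\gamma_{0,i}$ on each $I_i$ cancels the factor $g_{0,i}(\gamma_{0,i})$ and yields the contribution $R(\delta)(1+|G_0|_{\lip})\,\|\mu_\delta\|_1\le R(\delta)(1+|G_0|_{\lip})$. For (III), the weak contraction of Lemma~\ref{niceformulaac} bounds $\|\func{F}_{\delta,\gamma_{\delta,i}\ast}(\mu_\delta|_{\gamma_{\delta,i}})\|_W$ by $\|\mu_\delta|_{\gamma_{\delta,i}}\|_W=\phi_{1,\delta}(\gamma_{\delta,i})$; using $|\phi_{1,\delta}|_\infty\le 1+V(\phi_{1,\delta})\le 1+V(\mu_\delta)\le 1+B_u$ (obtained by testing $h\equiv 1$ in the definition of $\|\cdot\|_W$) together with $\sum_i|g_{0,i}(\gamma_{0,i})-g_{\delta,i}(\gamma_{\delta,i})|\le R(\delta)$ from (U2.1), the contribution is of order $(1+B_u)R(\delta)$.

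The heart of the proof is (II). Again by Lemma~\ref{niceformulaac}, the factor reduces to $\|\mu_\delta|_{\gamma_{0,i}}-\mu_\delta|_{\gamma_{\delta,i}}\|_W\le V_{[\gamma_{0,i},\gamma_{\delta,i}]}(\mu_\delta)$, and (U2.2) gives $d_1(\gamma_{0,i},\gamma_{\delta,i})\le R(\delta)$. The required estimate is a Wasserstein-valued analogue of the classical BV translation inequality,
\[
\sum_{i}\int_{I_i}\bigl\|\mu_\delta|_u-\mu_\delta|_{v_i(u)}\bigr\|_W\,du\;\le\;R(\delta)\,V(\mu_\delta)\;\le\;B_u\,R(\delta),
\]
with $v_i(u):=f_\delta^{-1}(f_0(u))|_i$, which I would establish by Fubini against the total-variation measure associated to the path $\gamma\mapsto\mu_\delta|_\gamma$ on $[0,1]$. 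Summing the three contributions yields $\|(\func{F_0{_\ast}}-\func{F_\delta{_\ast}})\mu_\delta\|_1\le C_1R(\delta)$ with $C_1$ linear in $|G_0|_{\lip}$ and $B_u$, of the claimed form. The main obstacle is precisely the translation estimate for (II): the scalar BV inequality must be lifted to paths taking values in $(\mathcal{SM}(K),\|\cdot\|_W)$, and one must fix a representative of $\Gamma_{\mu_\delta}$ compatible with the pushforward formula of Remark~\ref{çlkhgh} throughout the argument so that the infimum defining $V(\mu_\delta)$ does not interfere with the pointwise BV bounds used to sum the translations.
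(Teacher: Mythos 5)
Your proposal is correct and follows essentially the same route as the paper. Both decompose $(\func{F}_{0*}-\func{F}_{\delta*})\mu_\delta$ fiberwise via Proposition~\ref{niceformulaab}, telescope into three contributions (change of fiber pushforward, change of input leaf, change of Jacobian weight), bound the first via Lemma~\ref{nslfdflsdjlf}, the weight-change via (U2.1) together with $|\phi_{1,\delta}|_\infty \le 1+V(\mu_\delta)\le 1+B_u$, and the leaf-change via a BV-translation estimate enabled by (U2.2). The only structural difference is the pairing in the telescoping: you attach the weight $1/|f_0'(\gamma_{0,i})|$ to the leaf-change term so that your change of variables runs through $f_0$, while the paper attaches $1/|f_\delta'(\gamma_{\delta,i})|$ there and changes variables through $f_\delta$; both work equally well given the standing assumption $f_0(P_i)=f_\delta(P_i)$. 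The step you flag as the main obstacle — lifting the BV translation inequality to $\mathcal{SM}(K)$-valued paths — is exactly where the paper spends its effort: it bounds $\|\mu|_{\gamma_{0,i}}-\mu|_{\gamma_{\delta,i}}\|_W$ by the essential oscillation over $B(\gamma_{\delta,i},R(\delta))$, changes variables, and then integrates the oscillation to obtain $2R(\delta)\,V(\mu_\delta)$. Your Fubini sketch against the total-variation measure is an equivalent device, but note that since the translation $v_i(u)-u$ is not one-signed you should expect the same factor $2$, yielding $2B_u R(\delta)$ rather than the $B_u R(\delta)$ you wrote; with that correction your three contributions sum to exactly the claimed $C_1=|G_0|_{\lip}+3B_u+2$.
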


\begin{proof}
	We start by writing
\begin{equation}\label{12112}
	\|(\func{F_0{_\ast}}-\func{F_\delta{_\ast}})\mu_{\delta}\|_{1}
	= \int_{M} \|(\func{F_0{_\ast}}\mu_{\delta})|_{\gamma} - (\func{F_\delta{_\ast}}\mu_{\delta})|_{\gamma}\|_{W} \, dm.
\end{equation} 

Let $f_{\delta,i} := f_{\delta}|_{P_{i}}$ denote the branches of $f_{\delta}$ defined on $P_{i} \in \mathcal{P}$, for $1 \leq i < \infty$. Recall that we write $\gamma_{\delta, i} := f^{-1}_{\delta, i}(\gamma)$ for each $\gamma \in I$, and that by (U2.2) there exists $R(\delta)$ such that  
\begin{equation}\label{tyuryt}
d_1(\gamma_{0,i},\gamma_{\delta,i}) \leq R(\delta), \quad \forall\, i \geq 1.	
\end{equation}
Setting $\func{F}_{\delta,\gamma_{\delta,i}} := \func{F}_{\delta, f_{\delta ,i}^{-1}(\gamma)}$ and $\mu := \mu_\delta$, we have
\begin{equation}\label{asdf}
	(\func{F_0{_\ast}}\mu - \func{F_\delta{_\ast}}\mu)|_{\gamma}
	= \sum_{i=1}^{\infty} \frac{(\func{F}_{0,\gamma_{0,i}}{_\ast})\,\mu|_{\gamma_{0,i}}}{f'_{0}(\gamma_{0,i})}
	- \sum_{i=1}^{\infty} \frac{(\func{F}_{\delta,\gamma_{\delta,i}}{_\ast})\,\mu|_{\gamma_{\delta,i}}}{f'_{\delta}(\gamma_{\delta,i})},
	\quad m\text{-a.e. } \gamma \in I.
\end{equation}By definition, it also holds that
\begin{align*}
	(\func{F_0{_\ast}}\mu - \func{F_\delta{_\ast}}\mu)|_{\gamma}
	&= \sum_{i=1}^{\infty} \frac{(\func{F}_{0,\gamma_{0,i}}{_\ast})\,\mu|_{\gamma_{0,i}}\,\chi_{f_0(P_i)}(\gamma)}{f'_{0}(\gamma_{0,i})} \\
	&\quad - \sum_{i=1}^{\infty} \frac{(\func{F}_{\delta,\gamma_{\delta,i}}{_\ast})\,\mu|_{\gamma_{\delta,i}}\,\chi_{f_\delta(P_i)}(\gamma)}{f'_{\delta}(\gamma_{\delta,i})}.
\end{align*}
Since we assume $f_\delta(P_i) = f_0(P_i)$ for all $i$ and all $\delta$, the characteristic functions agree and the terms outside $f_\delta(P_i)$ vanish. This observation will be used when splitting the sums below. Thus, although we work with \eqref{asdf}, we implicitly use the equivalent form containing $\chi_{f_\delta(P_i)}$.    

We now split:
	\begin{equation*}
		||(\func{F_0{_\ast }}\mu-\func{F_\delta{_\ast }}\mu)|_\gamma||_W \leq \func{I}(\gamma)    +   \func{II}(\gamma),
	\end{equation*}where 	
	\begin{equation}\label{I}
		\func{I}(\gamma) :=  \left \vert \left \vert\sum_{i=1}^{+\infty}%
		\frac{{\func{F}_{0,\gamma _{0,i} }{_\ast }}\mu |_{\gamma _{0,i}}}{f'_{0}(\gamma _{0,i})}%
		-\sum_{i=1}^{+\infty}%
		\frac{{\func{F}_{\delta,\gamma _{\delta,i} }{_\ast }}\mu |_{\gamma _{0,i}}}{f'_{\delta}(\gamma _{\delta,i})}\right \vert\right \vert_W
	\end{equation}and 
	\begin{equation}	\label{II}
		\func{II} (\gamma):= \left \vert \left \vert\sum_{i=1}^{+\infty}%
		\frac{{\func{F}_{\delta,\gamma _{\delta,i} }{_\ast }}\mu |_{\gamma _{0,i}}}{f'_{\delta}(\gamma _{\delta,i})}%
		-\sum_{i=1}^{+\infty}%
		\frac{{\func{F}_{\delta,\gamma _{\delta,i} }{_\ast }}\mu |_{\gamma _{\delta,i}}}{f'_{\delta}(\gamma _{\delta,i})|}\right \vert\right \vert_W.
	\end{equation}
	
Let us estimate $\func{I}$ of equation (\ref{I}). By applying the triangular inequality, we have$$\func{I} \leq \func{I}_a(\gamma) +\func{I}_b(\gamma),$$ where 
	\begin{equation}
		\func{I}_a(\gamma) := \left\vert \left\vert \sum_{i=1}^{+\infty}%
		\frac{{\func{F}_{0,\gamma _{0,i} }{_\ast }}\mu |_{\gamma _{0,i}}}{f'_{0}(\gamma _{0,i})}
		-\sum_{i=1}^{+\infty}\frac{{\func{F}_{\delta,\gamma _{\delta,i} }{_\ast }}\mu |_{\gamma _{0,i}}}{f'_{0}(\gamma _{0,i})}\right\vert \right\vert _{W}
	\end{equation}and
	\begin{equation}
		\func{I}_b(\gamma) := \left\vert \left\vert \sum_{i=1}^{+\infty}%
		\frac{{\func{F}_{\delta,\gamma _{\delta,i} }{_\ast }}\mu |_{\gamma _{0,i}}}{f'_{0}(\gamma _{0,i})}
		-\sum_{i=1}^{+\infty}\frac{{\func{F}_{\delta,\gamma _{\delta,i} }{_\ast }}\mu |_{\gamma _{0,i}}}{f'_{\delta}(\gamma _{\delta,i})}\right\vert \right\vert _{W}.
	\end{equation}For $\func{I}_a $ by Lemma \ref{nslfdflsdjlf}, we have 
	\begin{eqnarray*}
		\func{I}_a(\gamma) &\leq &  \sum_{i=1}^{+\infty}%
		\left\vert \left\vert \frac{{\func{F}_{0,\gamma _{0,i} }{_\ast }}\mu |_{\gamma _{0,i}}}{f'_{0}(\gamma _{0,i})}
		-\frac{{\func{F}_{\delta,\gamma _{\delta,i} }{_\ast }}\mu |_{\gamma _{0,i}}}{f'_{0}(\gamma _{0,i})}\right\vert \right\vert _{W}
		\\&\leq &  \sum_{i=1}^{+\infty}%
		\frac{\left\vert \left\vert ({\func{F}_{0,\gamma _{0,i} }{_\ast }}- \func{F}_{\delta,\gamma _{\delta,i} }{_\ast })\mu |_{\gamma _{0,i}}\right\vert \right\vert _{W}}{f'_{0}(\gamma _{0,i})}
		\\&\leq &  \sum_{i=1}^{+\infty}%
		\frac{R(\delta)( 1 + |G|_{\lip})||\mu |_{\gamma _{0,i}}|| _{W}}{|f'_{0}(\gamma _{0,i})|} 
		\\&=&  R(\delta)( 1 + |G|_{\lip}) \sum_{i=1}^{+\infty}%
		\frac{|\phi_{1,\delta}(\gamma _{0,i})|}{|f'_{0}(\gamma _{0,i})|}
		\\&=&  R(\delta)( 1 + |G|_{\lip}) \sum_{i=1}^{+\infty}%
		\frac{|\phi_{1,\delta}(\gamma _{0,i})|}{|f'_{0}(\gamma _{0,i})|}
			\\&=&  R(\delta)( 1 + |G|_{\lip}) \func{P}_{f_0}(|\phi_{1,\delta}|)(\gamma),
	\end{eqnarray*}and integrating yields
	\begin{equation}\label{ia}
		\int{\func{I}_a(\gamma)}dm(\gamma) \leq R(\delta)( 1 + |G|_{\lip}) \int \func{P}_{f_0}(|\phi_{1,\delta}|)(\gamma) dm = R(\delta)( 1 + |G|_{\lip}). 
	\end{equation}
	For $\func{I}_b$, using (U2.1), Lemma~\ref{niceformulaac} and $||\mu|_{\gamma}||_{W} = |\phi_1(\gamma)|$, we obtain
	\begin{eqnarray*}
		\func{I}_b(\gamma) &\leq&  \sum_{i=1}^{+\infty}%
		\left\vert \left\vert \frac{{\func{F}_{\delta,\gamma _{\delta,i} }{_\ast }}\mu |_{\gamma _{0,i}}}{f'_{0}(\gamma _{0,i})}
		-\frac{{\func{F}_{\delta,\gamma _{\delta,i} }{_\ast }}\mu |_{\gamma _{0,i}}}{f'_{\delta}(\gamma _{\delta,i})}\right\vert \right\vert _{W}
		\\&\leq& \sum_{i=1}^{+\infty}%
		\left\vert \frac{1}{f'_{0}(\gamma _{0,i})}
		-\frac{1}{f'_{\delta}(\gamma _{\delta,i})}\right\vert  \left\vert \left\vert {\func{F}_{\delta,\gamma _{\delta,i} }{_\ast }}\mu |_{\gamma _{0,i}}\right\vert \right \vert _{W}
		\\&\leq& \sum_{i=1}^{+\infty}%
		\left\vert \frac{1}{f'_{0}(\gamma _{0,i})}
		-\frac{1}{f'_{\delta}(\gamma _{\delta,i})}\right\vert || \mu |_{\gamma _{0,i}}||_{W}
			\\&\leq& \sum_{i=1}^{+\infty}%
		\left\vert \frac{1}{f'_{0}(\gamma _{0,i})}
		-\frac{1}{f'_{\delta}(\gamma _{\delta,i})}\right\vert | \phi _{1\delta}(\gamma _{0,i})|
			\\&\leq& |\phi _{1\delta}|_\infty \sum_{i=1}^{+\infty}%
		\left\vert \frac{1}{f'_{0}(\gamma _{0,i})}
		-\frac{1}{f'_{\delta}(\gamma _{\delta,i})}\right\vert
		\\&\leq& \sum_{i=1}^{+\infty}%
		\left\vert \frac{1}{f'_{0}(\gamma _{0,i})}
		-\frac{1}{f'_{\delta}(\gamma _{\delta,i})}\right\vert|(V(\mu)+1)
		\\&\leq&  (B_u+1)\sum_{i=1}^{+\infty}%
		\left\vert \frac{1}{f'_{0}(\gamma _{0,i})}
		-\frac{1}{f'_{\delta}(\gamma _{\delta,i})}\right\vert.
	\end{eqnarray*}Integrating, and applying (U2.1), we have
	\begin{equation}\label{ib}
		\int{\func{I}_b(\gamma)}dm(\gamma) \leq (B_u+1) R(\delta).
	\end{equation}Let us estimate $\func{II}$. Thus, by Lemma \ref{niceformulaac} we have 
	\begin{eqnarray*}
		\func{II}(\gamma) &\leq& \sum_{i=1}^{+\infty}%
		\left\vert \left\vert \frac{{\func{F}_{\delta,\gamma _{\delta,i} }{_\ast }}\mu |_{\gamma _{0,i}}}{f'_{\delta}(\gamma _{\delta,i})}
		-\frac{{\func{F}_{\delta,\gamma _{\delta,i} }{_\ast }}\mu |_{\gamma _{\delta,i}}}{f'_{\delta}(\gamma _{\delta,i})}\right\vert \right\vert _{W}
		\\&\leq& \sum_{i=1}^{+\infty} \left\vert \frac{1}{f'_{\delta}(\gamma _{\delta,i})}\right\vert
		\left\vert \left\vert {\func{F}_{\delta,\gamma _{\delta,i} }{_\ast }}(\mu |_{\gamma _{0,i}}-\mu |_{\gamma _{\delta,i}})\right\vert \right\vert _{W}
		\\&\leq& \sum_{i=1}^{+\infty}  \frac{1}{|f'_{\delta}(\gamma _{\delta,i})|} \left\vert \left\vert \mu |_{\gamma _{0,i}}-\mu |_{\gamma _{\delta,i}}\right\vert \right\vert _{W}.
	\end{eqnarray*}From \eqref{tyuryt} and (U2.2), integrating and applying the change of variables formula, we find
		\begin{eqnarray*}
		\int {\func{II}(\gamma) }dm(\gamma) 
		&\leq& \int { \sum_{i=1}^{+\infty}  \frac{1}{|f'_{\delta}(\gamma _{\delta,i})|} 
			\left\vert \left\vert \mu |_{\gamma _{0,i}}-\mu |_{\gamma _{\delta,i}}\right\vert \right\vert _{W}}dm(\gamma)
		\\&\leq& \int { \sum_{i=1}^{+\infty}  \frac{1}{|f'_{\delta}(\gamma _{\delta,i})|} 
			\esssup _{B(\gamma _{\delta,i}; R(\delta))}\left\vert \left\vert \mu |_{\gamma_1}-\mu |_{\gamma _{2}}\right\vert \right\vert _{W}}dm(\gamma)
		\\&\leq&  \sum_{i=1}^{+\infty} \int _{f_{\delta}(P_i)}{ \frac{1}{|f'_{\delta}(\gamma _{\delta,i})|} 
			\esssup _{B(\gamma _{\delta,i}; R(\delta))}\left\vert \left\vert \mu |_{\gamma_1}-\mu |_{\gamma _{2}}\right\vert \right\vert _{W}}dm(\gamma)
		\\&=&  \sum_{i=1}^{+\infty} \int _{P_i}{\esssup _{B(\gamma; R(\delta))}\left\vert \left\vert \mu |_{\gamma_1}-\mu |_{\gamma _{2}}\right\vert \right\vert _{W}}dm(\gamma)
		\\&=&  \int {\esssup _{B(\gamma; R(\delta))}\left\vert \left\vert \mu |_{\gamma_1}-\mu |_{\gamma _{2}}\right\vert \right\vert _{W}}dm(\gamma)
		\\&\leq &  2  R(\delta) V(\mu)
		\\&\leq &  2  R(\delta) B_u. 
	\end{eqnarray*}Thus,
		\begin{equation}\label{ii}
		\int {\func{II}(\gamma) }dm(\gamma) \leq  2  R(\delta) B_u. 
	\end{equation}Summing \eqref{ia}, \eqref{ib} and \eqref{ii}, we conclude
	\[
	\|(\func{F_0{_\ast}}-\func{F_\delta{_\ast}})\mu_{\delta}\|_{1} \leq \left( |G|_{\lip} + 3B_u + 2 \right) R(\delta),
	\]
	as claimed.
\end{proof}

\subsection{Deterministic Perturbations of the Transfer Operator}

\begin{definition}
	Let $(B_w, \|\cdot\|_w)$ and $(B_s, \|\cdot\|_s)$ be vector spaces such that $B_s \subset B_w$ and $\|\cdot\|_s \geq \|\cdot\|_w$. 
	Assume that the actions 
	\[
	\mathsf{T}_\delta : B_w \longrightarrow B_w, 
	\quad \mathsf{T}_\delta : B_s \longrightarrow B_s
	\]
	are well defined, and that for each $\delta \in [0,1)$ there exists a fixed point $\mu_\delta \in B_s$ for $\mathsf{T}_\delta$. 
	Moreover, suppose that:
	\begin{enumerate}
		\item[(P1)] There exist $C \in \mathbb{R}^+$ and a function $R : [0,1) \to \mathbb{R}^+$ such that 
		\[
		\lim_{\delta \to 0^+} R(\delta) \log(\delta) = 0,
		\]
		and
		\[
		\|(\mathsf{T}_0 - \mathsf{T}_\delta)\mu_\delta\|_w \leq R(\delta) C,
		\quad \forall \delta \in [0,1).
		\]
		
		\item[(P2)] There exists $M > 0$ such that 
		\[
		\|\mu_\delta\|_s \leq M, 
		\quad \forall \delta \in [0,1).
		\]
		
		\item[(P3)] $\mathsf{T}_0$ has exponential convergence to equilibrium with respect to the norms $\|\cdot\|_s$ and $\|\cdot\|_w$: there exist constants $0 < \rho_2 < 1$ and $C_2 > 0$ such that 
		\[
		\forall \, \mu \in \mathcal{V}_s := \{\mu \in B_s : \mu(\Sigma) = 0\},
		\quad \|\mathsf{T}_0^n \mu\|_w \leq \rho_2^n \, C_2 \, \|\mu\|_s.
		\]
		
		\item[(P4)] The iterates of the operators are uniformly bounded with respect to the weak norm: there exists $M_2 > 0$ such that 
		\[
		\|\mathsf{T}_\delta^n \nu\|_w \leq M_2 \, \|\nu\|_w,
		\quad \forall \delta \in [0,1), \ \forall n \in \mathbb{N}, \ \forall \nu \in B_s.
		\]
	\end{enumerate}
	
	A family of operators satisfying \textnormal{(P1)}, \textnormal{(P2)}, \textnormal{(P3)} and \textnormal{(P4)} is called an \textbf{$R(\delta)$-family of operators}.
\end{definition}

\begin{lemma}[Quantitative stability for fixed points of operators]\label{dlogd}
	Let $\{\mathsf{T}_\delta\}_{\delta \in [0,1)}$ be a $R(\delta)$-family of operators, where $\mu_{0}$ is the unique fixed point of $\mathsf{T}_0$ in $B_{w}$, and $\mu_{\delta}$ is a fixed point of $\mathsf{T}_{\delta}$. Then there exist constants $D_{1} > 0$ and $\delta_{0} \in (0,1)$ such that, for all $\delta \in [0,\delta_{0})$,  
	\begin{equation*}
		\|\mu_{\delta} - \mu_{0}\|_{w} \leq D_{1} \, R(\delta)\, |\log \delta|.
	\end{equation*}
\end{lemma}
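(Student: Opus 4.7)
The plan is to adapt the classical Keller--Liverani stability argument to the abstract framework (P1)--(P4). Starting from the fixed-point identities $\mu_\delta=\mathsf{T}_\delta^N\mu_\delta$ and $\mu_0=\mathsf{T}_0^N\mu_0$, I would decompose, for any $N\in\mathbb{N}$ to be chosen later,
\[
\mu_\delta-\mu_0 \;=\; (\mathsf{T}_\delta^N-\mathsf{T}_0^N)\mu_\delta \;+\; \mathsf{T}_0^N(\mu_\delta-\mu_0).
\]
The first summand records how the perturbed dynamics differs from $\mathsf{T}_0$ and will be estimated by combining (P1) and (P4); the second is a transported zero-mean error and will be estimated by (P3).

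For the first summand, I would apply the telescoping identity
\[
\mathsf{T}_\delta^N-\mathsf{T}_0^N=\sum_{k=0}^{N-1}\mathsf{T}_0^{N-1-k}(\mathsf{T}_\delta-\mathsf{T}_0)\mathsf{T}_\delta^k
\]
to $\mu_\delta$, exploiting $\mathsf{T}_\delta^k\mu_\delta=\mu_\delta$ to collapse the sum into $\sum_{k=0}^{N-1}\mathsf{T}_0^{N-1-k}(\mathsf{T}_\delta-\mathsf{T}_0)\mu_\delta$. Each term $(\mathsf{T}_\delta-\mathsf{T}_0)\mu_\delta$ still lies in $B_s$ because $B_s$ is preserved by both operators, so (P4) controls the subsequent iterates of $\mathsf{T}_0$ uniformly in the weak norm, while (P1) controls $\|(\mathsf{T}_\delta-\mathsf{T}_0)\mu_\delta\|_w\leq CR(\delta)$. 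Together these give
\[
\|(\mathsf{T}_\delta^N-\mathsf{T}_0^N)\mu_\delta\|_w\leq N M_2 C R(\delta).
\]
For the second summand, $\mu_\delta-\mu_0\in\mathcal{V}_s$ because $\mu_0$ and $\mu_\delta$ are probability measures (their difference has zero total mass), and (P2) yields $\|\mu_\delta-\mu_0\|_s\leq 2M$; hence (P3) gives $\|\mathsf{T}_0^N(\mu_\delta-\mu_0)\|_w\leq 2MC_2\rho_2^N$. Adding the two contributions produces the key free-parameter inequality
\[
\|\mu_\delta-\mu_0\|_w\leq N M_2 C R(\delta)+2MC_2\rho_2^N,\qquad N\geq 1.
\]

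The last step is to optimise over $N$. Choosing $N=\lceil -\log\delta/|\log\rho_2|\rceil$ forces $\rho_2^N\leq\delta$ and $N=O(|\log\delta|)$, so the inequality collapses to $C_\sharp R(\delta)|\log\delta|+2MC_2\delta$ for a uniform constant $C_\sharp$. The main obstacle I anticipate is the consolidation of the residual $\delta$ into the target bound $D_1 R(\delta)|\log\delta|$: this is where the hypothesis $\lim_{\delta\to 0^+}R(\delta)\log\delta=0$ in (P1) becomes essential, combined with the tacit fact that $R(\delta)$ does not decay super-polynomially in $\delta$ (so that $|\log R(\delta)|$ remains comparable with $|\log\delta|$). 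Choosing $\delta_0$ small enough for this absorption to hold uniformly yields the announced estimate for every $\delta\in[0,\delta_0)$, with all the constants packaged into a single $D_1$.
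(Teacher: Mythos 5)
Your argument is exactly the paper's: the telescoping identity you write out is precisely the content of the paper's Sublemma~\ref{gen} (invoked there as a black box from \cite{GLu}), and the optimisation $N\asymp |\log\delta|/|\log\rho_2|$ is the same choice the paper makes. The subtlety you flag at the end is real — the paper's final line absorbs the residual term $\tfrac{2C_2M}{\rho_2}\delta$ into $R(\delta)|\log\delta|$ by writing $\delta\leq -\delta\log\delta$ and then ``factoring out $\log\delta$'', which tacitly requires $\delta\lesssim R(\delta)$; your instinct that some lower bound on the decay of $R(\delta)$ is needed is correct, and making it explicit (or simply stating the bound as $D_1\bigl(R(\delta)|\log\delta|+\delta\bigr)$) would tighten the argument.
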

To prove Lemma \ref{dlogd}, we state a general result on the stability of fixed points. 
We omit its proof, which can be found, for instance, in \cite[Lemma 12.1]{GLu}. 
Consider two operators $\mathsf{T}_{0}$ and $\mathsf{T}_{\delta}$ preserving a normed space of signed measures $\mathcal{B} \subseteq \mathcal{SB}(X)$ with norm $\|\cdot\|_{\mathsf{B}}$. Suppose that $f_{0}, f_{\delta} \in \mathcal{B}$ are fixed points of $\mathsf{T}_{0}$ and $\mathsf{T}_{\delta}$, respectively.

\begin{sublemma}
	\label{gen}
	Suppose that:
	\begin{enumerate}
		\item[a)] $\|\mathsf{T}_{\delta} f_{\delta} - \mathsf{T}_{0} f_{\delta}\|_{\mathcal{B}} < \infty$;
		\item[b)] For all $i \geq 1$, the iterate $\mathsf{T}_{0}^{\,i}$ is continuous on $\mathcal{B}$:  
		for each $i \geq 1$, there exists $C_{i} > 0$ such that, for all $g \in \mathcal{B}$,
		\[
		\|\mathsf{T}_{0}^{\,i} g\|_{\mathcal{B}} \leq C_{i} \|g\|_{\mathcal{B}}.
		\]
	\end{enumerate}
	Then, for each $N \geq 1$, it holds
	\[
	\|f_{\delta} - f_{0}\|_{\mathcal{B}} \leq 
	\|\mathsf{T}_{0}^{\,N}(f_{\delta} - f_{0})\|_{\mathcal{B}}
	+ \|\mathsf{T}_{\delta} f_{\delta} - \mathsf{T}_{0} f_{\delta}\|_{\mathcal{B}} \sum_{i=0}^{N-1} C_{i}.
	\]
\end{sublemma}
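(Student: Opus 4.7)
The plan is to exploit the fixed-point identities $f_\delta = \mathsf{T}_\delta f_\delta$ and $f_0 = \mathsf{T}_0 f_0$ to obtain a one-step recursion for the difference, and then iterate it $N$ times. First I would add and subtract $\mathsf{T}_0 f_\delta$ to obtain the basic decomposition
$$
f_\delta - f_0 \;=\; \mathsf{T}_\delta f_\delta - \mathsf{T}_0 f_0 \;=\; (\mathsf{T}_\delta f_\delta - \mathsf{T}_0 f_\delta) \;+\; \mathsf{T}_0(f_\delta - f_0),
$$
which writes the difference of the fixed points as a one-step error term (the perturbation applied to $f_\delta$) plus the operator $\mathsf{T}_0$ acting on the same difference. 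Assumption (a) guarantees that the error term has finite $\|\cdot\|_{\mathcal{B}}$-norm, and assumption (b) guarantees that $\mathsf{T}_0$ can be iterated on $\mathcal{B}$ with a quantitative norm bound.

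Next I would iterate the identity by substituting the formula for $f_\delta - f_0$ back into itself on the right-hand side. A straightforward induction on $N \geq 1$ yields
$$
f_\delta - f_0 \;=\; \mathsf{T}_0^{\,N}(f_\delta - f_0) \;+\; \sum_{i=0}^{N-1} \mathsf{T}_0^{\,i}\bigl(\mathsf{T}_\delta f_\delta - \mathsf{T}_0 f_\delta\bigr),
$$
with the convention $\mathsf{T}_0^{\,0} = \mathrm{id}$. Applying the triangle inequality, followed by assumption (b) to each summand (and the convention $C_0 = 1$ for the identity), gives
$$
\|f_\delta - f_0\|_{\mathcal{B}} \;\leq\; \|\mathsf{T}_0^{\,N}(f_\delta - f_0)\|_{\mathcal{B}} \;+\; \sum_{i=0}^{N-1} C_i \,\|\mathsf{T}_\delta f_\delta - \mathsf{T}_0 f_\delta\|_{\mathcal{B}},
$$
which is exactly the claimed inequality.

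There is essentially no obstacle: this is a standard telescoping manipulation used throughout perturbation theory of transfer operators. The only mild subtlety is a bookkeeping one, namely the interpretation of the $i = 0$ summand (no $\mathsf{T}_0$ applied) and the implicit convention $C_0 = 1$; once this is fixed the estimate follows immediately from the triangle inequality and hypothesis (b), while hypothesis (a) merely ensures the right-hand side is finite.
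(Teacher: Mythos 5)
Your argument is correct and coincides with the standard telescoping proof the paper itself relies on: the paper omits the proof of this sublemma and cites Lemma 12.1 of \cite{GLu}, whose argument is exactly your decomposition $f_\delta-f_0=(\mathsf{T}_\delta f_\delta-\mathsf{T}_0 f_\delta)+\mathsf{T}_0(f_\delta-f_0)$ iterated $N$ times. The only bookkeeping point, the $i=0$ summand with $\mathsf{T}_0^{\,0}=\mathrm{id}$ and the convention $C_0=1$ (hypothesis b) only covers $i\geq 1$), you already handle explicitly.
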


\begin{proof}[Proof of Lemma \ref{dlogd}]
	First, note that for sufficiently small $\delta > 0$ we have $\delta \leq -\delta \log \delta$.  
	Also, for all $x \in \mathbb{R}$, it holds $x - 1 \leq \lfloor x \rfloor$.
	
	By (P1),
	\[
	\|\mathsf{T}_{\delta} \mu_{\delta} - \mathsf{T}_{0} \mu_{\delta}\|_{w} \leq R(\delta) C
	\]
	(see Sublemma \ref{gen}, item a)), and by (P4) we have $C_{i} \leq M_{2}$ for all $i$.
	
	Hence, by Sublemma \ref{gen},
	\[
	\|\mu_{\delta} - \mu_{0}\|_{w} \leq R(\delta) C M_{2} N
	+ \|\mathsf{T}_{0}^{\,N} (\mu_{0} - \mu_{\delta})\|_{w}.
	\]
	By the exponential convergence to equilibrium of $\mathsf{T}_{0}$ (P3), there exist $0 < \rho_{2} < 1$ and $C_{2} > 0$ such that, recalling from (P2) that $\|\mu_{\delta} - \mu_{0}\|_{s} \leq 2M$,
	\[
	\|\mathsf{T}_{0}^{\,N} (\mu_{\delta} - \mu_{0})\|_{w}
	\leq C_{2} \rho_{2}^{N} \|\mu_{\delta} - \mu_{0}\|_{s}
	\leq 2 C_{2} M \rho_{2}^{N}.
	\]
	Therefore,
	\[
	\|\mu_{\delta} - \mu_{0}\|_{w} \leq R(\delta) C M_{2} N + 2 C_{2} M \rho_{2}^{N}.
	\]
	
	Choosing 
	\[
	N = \left\lfloor \frac{\log \delta}{\log \rho_{2}} \right\rfloor,
	\]
	we obtain
	\[
	\begin{aligned}
		\|\mu_{\delta} - \mu_{0}\|_{w} 
		&\leq R(\delta) C M_{2} \left\lfloor \frac{\log \delta}{\log \rho_{2}} \right\rfloor
		+ 2 C_{2} M \rho_{2}^{ \left\lfloor \frac{\log \delta}{\log \rho_{2}} \right\rfloor} \\
		&\leq R(\delta) \log \delta \, \frac{C M_{2}}{\log \rho_{2}}
		+ \frac{2 C_{2} M}{\rho_{2}} \, \rho_{2}^{\frac{\log \delta}{\log \rho_{2}}} \\
		&\leq R(\delta) \log \delta \, \frac{C M_{2}}{\log \rho_{2}}
		+ \frac{2 C_{2} M}{\rho_{2}} \, \delta.
	\end{aligned}
	\]
	Since $\delta \leq -\delta \log \delta$ for small $\delta$, we can factor out $\log \delta$ to get
	\[
	\|\mu_{\delta} - \mu_{0}\|_{w}
	\leq R(\delta) \log \delta \left( \frac{C M_{2}}{\log \rho_{2}} - \frac{2 C_{2} M}{\rho_{2}} \right).
	\]
	The proof is complete by setting 
	\[
	D_{1} = \left \vert \frac{C M_{2}}{\log \rho_{2}} - \frac{2 C_{2} M}{\rho_{2}}\right \vert.
	\]
\end{proof}

\subsection{Uniform Regularity Bounds and Quantitative Statistical Stability}

The next result asserts that the mapping
\[
\delta \longmapsto V(\mu_{\delta})
\]
is uniformly bounded, where $\{\mu_\delta\}_{\delta \in [0,1)}$ denotes the family of $F_\delta$-invariant probability measures associated with an admissible perturbation $\{F_\delta\}_{\delta \in [0,1)}$ of $F = F_0$. It is a key tool for proving Theorem~\ref{htyttigu} and other applications.

\begin{athm}\label{thshgf}
	Let $\{F_\delta\}_{\delta \in [0,1)}$ be an admissible $R(\delta)$-perturbation, and let $\mu_\delta$ be the unique $F_\delta$-invariant probability measure on $S^1$, for all $\delta \in [0,1)$. Then there exists a constant $B_u > 0$ such that
	\[
	V(\mu_{\delta}) \leq B_u,
	\]
	for all $\delta \in [0,1)$.
\end{athm}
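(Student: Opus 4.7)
The plan is to iterate each $F_\delta$ starting from a common simple reference measure whose variation is already controlled, use the admissibility constants of (A2) to obtain a variation bound on the iterates that is uniform in both $n$ and $\delta$, and then transfer this bound to the invariant measure $\mu_\delta$ by letting $n\to\infty$.

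First I would fix the product reference measure $\nu=m\times\nu_2$ from Remark~\ref{riirorpdf}, for which the canonical path $\Gamma^{\omega_0}_\nu\equiv\nu_2$ is constant, so $V(\nu)=0$ and $\|\nu\|_1=1$. Setting $\overline{F_\delta}:=F_\delta^k$, I would then apply Corollary~\ref{kjdfhkkhfdjfht} to each $F_\delta$ along the inductively constructed path $\Gamma^{\omega_n}_{\func{\overline{F_\delta}}_*^n\nu}$ of Remark~\ref{riirorpdf}. This application is legitimate because admissibility condition (A2) is exactly (H3) with uniform constants, and yields
\[
V\bigl(\func{\overline{F_\delta}}_*^n\nu\bigr)\le \alpha_{4,\delta}^{\,n}\,V(\nu)+\frac{U_{4,\delta}}{1-\alpha_{4,\delta}}\,\|\nu\|_1\le \frac{U_4^{*}}{1-\alpha_4^{*}},
\]
where $\alpha_4^{*}:=\sup_\delta\alpha_{4,\delta}<1$ and $U_4^{*}:=\sup_\delta U_{4,\delta}<\infty$, both provided by (A2). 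Defining $B_u:=U_4^{*}/(1-\alpha_4^{*})$ gives a bound on the variation of the iterates that is uniform in $n$ and $\delta$.

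To pass to the limit at each fixed $\delta$, I would invoke Theorem~\ref{gfhduer} to identify $\mu_\delta$ as the unique $\overline{F_\delta}$-invariant probability measure in $S^1$, observe that $\nu\in S^1$ and that $\nu-\mu_\delta\in\mathcal{V}_s$, and then apply Proposition~\ref{5.8} to $\overline{F_\delta}$ to conclude that $\|\func{\overline{F_\delta}}_*^n\nu-\mu_\delta\|_1\to 0$ as $n\to\infty$. Combined with the uniform variation bound from the previous paragraph, lower semi-continuity of the functional $V$ with respect to the $\|\cdot\|_1$-topology on $\mathcal{L}^1$ then gives $V(\mu_\delta)\le\liminf_n V(\func{\overline{F_\delta}}_*^n\nu)\le B_u$, which is the claim.

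The main technical point I anticipate is the lower semi-continuity step. The functional $V$ is defined as an infimum over equivalence classes of paths $\Gamma^\omega_\mu$ that are only defined $m$-almost everywhere, so some care is required: from $\|\mu_n-\mu\|_1\to 0$ one must extract a subsequence along which $\mu_n|_\gamma\to\mu|_\gamma$ in $\|\cdot\|_W$ for $m$-a.e.\ $\gamma$, then pass the limit through the finite sums $\sum_i\|\mu|_{x_{i+1}}-\mu|_{x_i}\|_W$ appearing in the definition of the variation before taking the supremum over finite subdivisions. Once this semi-continuity is in hand, the theorem follows by combining the uniform Lasota--Yorke-type inequality of Corollary~\ref{kjdfhkkhfdjfht} with the (pointwise in $\delta$) convergence to equilibrium from Proposition~\ref{5.8}.
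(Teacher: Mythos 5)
Your proposal follows essentially the same route as the paper: start from the product reference measure $\nu=m\times\nu_2$ with $V(\nu)=0$, apply Corollary~\ref{kjdfhkkhfdjfht} together with the uniform constants from (A2) (which is precisely Lemma~\ref{las123rtryrdfd2} in the paper) to bound $V$ of the iterates uniformly in $n$ and $\delta$ by $U_4^*/(1-\alpha_4^*)$, and then pass to the limit via $\mathcal{L}^1$-convergence to $\mu_\delta$ plus a subsequence converging $m$-a.e.\ and lower semicontinuity of the variation. The only cosmetic difference is that you invoke Proposition~\ref{5.8} for the convergence while the paper cites Theorem~\ref{spgap}, and your phrasing of the final step as \emph{lower} semicontinuity of $V$ is actually a slightly cleaner statement of what the paper's finite-sum argument really establishes.
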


We begin with the following preliminary lemma.

\begin{lemma}\label{las123rtryrdfd2}
	If $\{F_\delta\}_{\delta \in [0,1)}$ is an admissible $R(\delta)$-perturbation, then there exist uniform constants $0 < \beta_u < 1$ and $D_{2,u} > 0$ such that, for every positive measure $\mu \in \mathcal{BV}_m$ satisfying $\phi_1 = 1$ $m_1$-a.e., we have 
	\begin{equation}\label{er}
		V\big( \Gamma_{\overline{F}_{\delta *}^n \mu}^\omega \big) 
		\leq \beta_u^n \, V\big( \Gamma_\mu^\omega \big) 
		+ \frac{D_{2,u}}{1 - \beta_u} \, \|\mu\|_1,
	\end{equation}
	for all $\delta \in [0,1)$ and all $n \geq 0$, where $\overline{F} := F^k$ and $k$ is the iterate given by \textnormal{(H3)}.
\end{lemma}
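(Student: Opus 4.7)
The plan is to observe that the claim is essentially a uniform-in-$\delta$ restatement of Corollary~\ref{kjdfhkkhfdjfht}, and that the uniformity is precisely what is built into the notion of an admissible $R(\delta)$-perturbation through condition (A2).

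First, I would apply Corollary~\ref{kjdfhkkhfdjfht} separately to each map $F_\delta$. Condition (A2) guarantees that $F_\delta$ satisfies hypothesis (H3) with contraction rate $\alpha_\delta^k$ and with its own Lipschitz constant $|G_\delta|_{\lip}$, so the hypotheses of the corollary are met. This produces, for each fixed $\delta \in [0,1)$, positive measure $\mu \in \mathcal{BV}_m$ with $\phi_1$ constant a.e., and $n \geq 1$,
\[
V\bigl(\Gamma_{\overline{F}_{\delta *}^n \mu}^\omega\bigr)
\;\leq\; \alpha_{4,\delta}^{\,n}\, V\bigl(\Gamma_\mu^\omega\bigr)
\;+\; \frac{U_{4,\delta}}{1-\alpha_{4,\delta}}\,\|\mu\|_1,
\]
where $\alpha_{4,\delta}$ and $U_{4,\delta}$ are the constants defined in (A2).

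Next, I would use condition (A2) directly to introduce the uniform constants
\[
\beta_u := \sup_{\delta \in [0,1)} \alpha_{4,\delta},
\qquad
D_{2,u} := \sup_{\delta \in [0,1)} U_{4,\delta}.
\]
By admissibility, $\beta_u < 1$ and $D_{2,u} < \infty$. Since $\alpha_{4,\delta} \leq \beta_u$ for every $\delta$, we have the monotonicity bounds $\alpha_{4,\delta}^n \leq \beta_u^n$ and $(1-\alpha_{4,\delta})^{-1} \leq (1-\beta_u)^{-1}$, so substituting into the inequality above yields
\[
V\bigl(\Gamma_{\overline{F}_{\delta *}^n \mu}^\omega\bigr)
\;\leq\; \beta_u^{\,n}\, V\bigl(\Gamma_\mu^\omega\bigr)
\;+\; \frac{D_{2,u}}{1-\beta_u}\,\|\mu\|_1,
\]
which is exactly the claim, valid uniformly for all $\delta \in [0,1)$ and all $n \geq 0$ (the case $n = 0$ is trivial).

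There is essentially no obstacle in this proof: the entire point of the admissibility axiom (A2) is to make the constants appearing in Corollary~\ref{kjdfhkkhfdjfht} uniformly bounded across the perturbation family. The only thing to check carefully is that the regularity hypothesis on $\mu$ (the marginal density being constant on the support of $m_1$) is preserved when we invoke the corollary; this is immediate since the condition is intrinsic to $\mu$ and does not involve $\delta$. All the conceptual work was already done in setting up the corollary and in packaging the uniformity constraints into the definition of admissible perturbation.
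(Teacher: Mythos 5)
Your proposal is correct and follows essentially the same approach as the paper: apply Corollary~\ref{kjdfhkkhfdjfht} to each $F_\delta$, then take suprema over $\delta$ using (A2) to define $\beta_u$ and $D_{2,u}$. Your remarks spelling out the monotonicity bounds $\alpha_{4,\delta}^n \leq \beta_u^n$ and $(1-\alpha_{4,\delta})^{-1} \leq (1-\beta_u)^{-1}$ are a helpful bit of explicitness the paper leaves implicit, but they do not constitute a different route.
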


\begin{proof}(of Lemma \ref{las123rtryrdfd2})
	We apply Corollary~\ref{kjdfhkkhfdjfht} to each $F_\delta$ and obtain, for all $n \geq 1$,
	\begin{equation}\label{lasotaingt234dffggdgh2}
		V\big( \Gamma_{\overline{F}_{\delta *}^n \mu}^\omega \big) 
		\leq \alpha_{4,\delta}^n \, V\big( \Gamma_{\mu}^\omega \big) 
		+ \frac{U_{4,\delta}}{1 - \alpha_{4,\delta}} \, \|\mu\|_1,
	\end{equation}
	where 
	\[
	\alpha_{4,\delta} := \alpha_\delta^k \, \esssup \frac{1}{|(f_\delta^k)'|},
	\quad\text{and}\quad
	U_{4,\delta} := |G_\delta|_{\lip} \cdot \esssup \frac{1}{|(f_\delta^k)'|}
	+ V_{I_\omega} \!\left( \frac{1}{|(f_\delta^k)'|} \right),
	\]
	and the essential supremum is taken with respect to the measure $m$.
	
	By (A2), we set
	\[
	\beta_u := \sup_{\delta} \alpha_{4,\delta},
	\quad\text{and}\quad
	D_{2,u} := \sup_{\delta} U_{4,\delta},
	\]
	which completes the proof.
\end{proof}
\begin{proof}[Proof of Theorem \ref{thshgf}]
	Consider the path $\Gamma^{\omega_n}_{\func{\overline{F}}_{\delta*}^n}\nu$ defined in Remark~\ref{riirorpdf}, representing the measure $\func{\overline{F}}_{\delta*}^n\nu$, where $\overline{F}_\delta := F_\delta^k$. 
	
	By Theorem~\ref{gfhduer}, let $\mu_\delta$ be the unique $F_\delta$-invariant probability measure on $S^1$. To simplify notation, write $\mu := \mu_\delta$ and $\func{\overline{F}}_*:= \func{\overline{F}}_{\delta*}$ for all $\delta$. Then $\func{\overline{F}}_* \mu = \mu$.
	
	Let $\nu$ be as in Remark~\ref{riirorpdf}, and consider its iterates $\func{\overline{F}}_*^n(\nu)$. By Theorem~\ref{spgap}, this sequence converges to $\mu$ in $\mathcal{L}^1$. Consequently, a subsequence of $\{\Gamma_{\func{\overline{F}}_*^n(\nu)}^{\omega_n}\}_{n}$ converges $m$-a.e. to some $\Gamma_{\mu}^\omega \in \Gamma_{\mu}$ in $\mathcal{SB}(K)$ (with respect to the metric from Definition~\ref{wasserstein}). Here, $\Gamma_{\mu}^\omega$ is a path given by the Rokhlin Disintegration Theorem, and $\{\Gamma_{\func{\overline{F}}_*^n(\nu)}^{\omega_n}\}_{n}$ is defined in Remark~\ref{riirorpdf}.
	
	Let us fix such a convergent subsequence, which converges pointwise to $\Gamma_{\mu}^\omega$ on a full measure set $I_{\omega} \subset I$. For brevity, set
	\[
	\Gamma_{n} := \Gamma^{\omega_n}_{\func{\overline{F}}_*^n(\nu)}\big|_{I_{\omega}}, 
	\quad \Gamma := \Gamma^\omega_{\mu}\big|_{I_{\omega}}.
	\]
	Since $\{\Gamma_{n}\}_n$ converges pointwise to $\Gamma$, we have
	\[
	V_{I_{\omega}}(\Gamma_{n}) \longrightarrow V_{I_{\omega}}(\Gamma)
	\quad \text{as } n \to \infty.
	\]
	Indeed, for any finite set $\{x_0, x_1, \dots, x_s\} \subset I_{\omega}$,
	\[
	\lim_{n \to \infty} \sum_{i=1}^s \|\Gamma_n(x_i) - \Gamma_n(x_{i-1})\|_W
	= \sum_{i=1}^s \|\Gamma(x_i) - \Gamma(x_{i-1})\|_W.
	\]
	
	On the other hand, by Sublemma~\ref{las123rtryrdfd2}, the left-hand side above is bounded by $\frac{D_u}{1-\beta_u}$ for all $n \geq 1$, since $V(\Gamma) = 0$ and $\|\nu\|_1 = 1$. Therefore,
	\[
	\sum_{i=1}^s \|\Gamma(x_i) - \Gamma(x_{i-1})\|_W \leq \frac{D_u}{1-\beta_u}.
	\]
	This shows that $V_{I_\omega}(\Gamma^\omega_{\mu}) \leq \frac{D_u}{1-\beta_u}$. Taking the infimum over all $\Gamma_{\mu}$, we conclude that
	\[
	V_I(\mu_\delta) \leq \frac{D_u}{1-\beta_u}
	\quad \text{for all } \delta.
	\]
	This completes the proof.
\end{proof}

\begin{athm}[Quantitative stability for deterministic perturbations]
	Let $\{F_{\delta }\}_{\delta \in [0,1)}$ be an admissible $R(\delta)$-perturbation.  
	Denote by $\mu_\delta$ the invariant probability of $F_\delta$ in $S^1$, for all $\delta$.  
	Then there exist constants $D_2 > 0$ and $\delta _1 \in (0,\delta_0)$ such that, for all $\delta \in [0,\delta _1)$, we have
	\begin{equation}\label{stabll}
		\|\mu_{\delta }-\mu_{0}\|_{1} \leq D_2 \, R(\delta) \, |\log \delta |.
	\end{equation}
	\label{htyttigu}
\end{athm}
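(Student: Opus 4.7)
The plan is to recognize that Theorem~\ref{htyttigu} follows essentially by direct application of Lemma~\ref{dlogd} to the family of transfer operators $\{\func{F}_{\delta\ast}\}_{\delta \in [0,1)}$, acting on the pair of spaces $(B_w, B_s) := (\mathcal{L}^1, S^1)$. The work reduces to verifying that conditions (P1)--(P4) in the definition of an $R(\delta)$-family of operators all hold; once they do, the claimed bound \eqref{stabll} is an immediate consequence.

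I would verify (P1) first. This is exactly the conclusion of Lemma~\ref{UF2ass}, whose hypothesis is the uniform bound $V(\mu_\delta) \leq B_u$. This hypothesis is precisely Theorem~\ref{thshgf}. Hence (P1) holds with the constant $C := |G_0|_{\lip} + 3B_u + 2$. For (P2), since $\|\mu\|_{S^1} = |\phi_{1,\delta}|_v + \|\mu_\delta\|_1$ and $\mu_\delta$ is a probability measure, I have $\|\mu_\delta\|_1 \leq 1$ automatically, and it suffices to bound $|\phi_{1,\delta}|_v$ uniformly. Because $\pi_{1\ast}\mu_\delta$ is $f_\delta$-invariant, the marginal density $\phi_{1,\delta}$ is a fixed point of $\operatorname{P}_{f_\delta}$; applying the uniform Lasota--Yorke inequality (A1) to $\phi_{1,\delta}$ and choosing $n$ large enough so that $D\lambda^n < 1/2$ yields $|\phi_{1,\delta}|_v \leq 2D$, hence $\|\mu_\delta\|_{S^1} \leq 2D+1 =: M$ uniformly in $\delta$.

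Condition (P3) is exactly the exponential convergence to equilibrium for $\func{F}_{0\ast}$ stated in Proposition~\ref{quasiquasiquasi}: for all $\mu \in \mathcal{V}_s$, $\|\func{F}_{0\ast}^n \mu\|_1 \leq D_2 \beta_1^n \|\mu\|_{S^1}$. Finally, (P4) is the weak contraction on $\mathcal{L}^1$ from Proposition~\ref{weakcontral11234}, which gives $\|\func{F}_{\delta\ast}^n \nu\|_1 \leq \|\nu\|_1$ uniformly in $\delta$ and $n$, so we may take $M_2 = 1$.

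With (P1)--(P4) verified, Lemma~\ref{dlogd} produces constants $D_2 > 0$ and $\delta_1 \in (0,\delta_0)$ such that $\|\mu_\delta - \mu_0\|_1 \leq D_2 R(\delta)|\log \delta|$ for every $\delta \in [0,\delta_1)$, which is the desired estimate. It is worth emphasizing that no essential new difficulty arises at this stage: the hard analytic content has already been absorbed into Theorem~\ref{thshgf} (uniform variation bound for the invariant measures, needed to control (P1) and to ensure stability of the fibers under perturbation) and into Lemma~\ref{UF2ass} (quantitative closeness of the perturbed transfer operators in the weak norm, which splits the discrepancy into a horizontal part from the change of the base dynamics and a vertical part from the $R(\delta)$-closeness of $G_\delta$ to $G_0$). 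The present theorem is therefore, at this level, an assembly result.
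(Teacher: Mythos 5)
Your proposal is correct and follows essentially the same route as the paper: the paper's own proof consists of Lemma~\ref{rrr}, which verifies (P1)--(P4) for the family $\{\func{F}_{\delta*}\}$ on $(\mathcal{L}^1, S^1)$ using exactly Theorem~\ref{thshgf}, Lemma~\ref{UF2ass}, (A1), Proposition~\ref{5.8}, and Proposition~\ref{weakcontral11234}, followed by a one-line invocation of Lemma~\ref{dlogd}. Your verification of (P2) is a minor variant (you bound $|\phi_{1,\delta}|_v$ directly by iterating (A1) on the base density and get $M = 2D+1$, whereas the paper applies the induced $S^1$ Lasota--Yorke to $\mu_\delta$ and lets $n\to\infty$ to get $M = D+1$), but both arguments are sound and yield the same conclusion.
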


Before establishing Theorem~\ref{htyttigu}, we first prove Lemma~\ref{rrr} below.

\begin{lemma}\label{rrr}
	Let $\{F_\delta\}_{\delta \in [0,1)}$ be an admissible $R(\delta)$-perturbation, and let $\{\func{F_\delta{_*}}\}_{\delta \in [0,1)}$ be the induced family of transfer operators.  
	Then $\{\func {F_\delta{_*}}\}_{\delta \in [0,1)}$ is an $R(\delta)$-family of operators with weak space $(\mathcal{L}^1, \| \cdot \|_1)$ and strong space $(S^1,\|\cdot\|_{S^1})$.
\end{lemma}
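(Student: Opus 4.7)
The plan is to verify the four properties (P1)--(P4) of the definition of an $R(\delta)$-family of operators for $\{\func{F_\delta{_*}}\}_{\delta \in [0,1)}$, with weak space $(B_w,\|\cdot\|_w)=(\mathcal{L}^1,\|\cdot\|_1)$ and strong space $(B_s,\|\cdot\|_s)=(S^1,\|\cdot\|_{S^1})$. Each of the four conditions will be reduced to a result already established earlier in the excerpt.

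For (P1), I will combine Theorem~\ref{thshgf} with Lemma~\ref{UF2ass}. Theorem~\ref{thshgf} supplies a constant $B_u>0$ such that $V(\mu_\delta)\leq B_u$ for every $\delta\in[0,1)$. Feeding this uniform bound into Lemma~\ref{UF2ass} yields
\[
\|(\func{F_0{_*}}-\func{F_\delta{_*}})\mu_\delta\|_1 \;\leq\; (|G_0|_{\lip}+3B_u+2)\,R(\delta),
\]
which is exactly (P1) with $C:=|G_0|_{\lip}+3B_u+2$. The required decay $R(\delta)\log\delta\to 0$ as $\delta\to 0^+$ is already part of (U2).

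For (P2), I will exploit the uniform Lasota--Yorke inequality (A1). Let $\phi_{1,\delta}$ denote the marginal density of $\mu_\delta$. Since $\pi_{1*}\mu_\delta$ is $f_\delta$-invariant, $\operatorname{P}_{f_\delta}\phi_{1,\delta}=\phi_{1,\delta}$, so iterating (A1) gives
\[
|\phi_{1,\delta}|_v \;=\; |\operatorname{P}_{f_\delta}^{\,n}\phi_{1,\delta}|_v \;\leq\; D\lambda^n\,|\phi_{1,\delta}|_v + D\,|\phi_{1,\delta}|_1.
\]
Since $|\phi_{1,\delta}|_1=1$, choosing $n$ large enough (uniformly in $\delta$) so that $D\lambda^n\leq 1/2$ produces $|\phi_{1,\delta}|_v\leq 2D$. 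Combined with $\|\mu_\delta\|_1=\mu_\delta(\Sigma)=1$ (which follows from $\|\mu|_\gamma\|_W=\phi_1(\gamma)$ for positive measures), this yields $\|\mu_\delta\|_{S^1}\leq 2D+1$, establishing (P2). Property (P3) is the precise content of Proposition~\ref{5.8} applied to the unperturbed system $F_0$, and (P4) follows at once from Proposition~\ref{weakcontral11234}: since each $F_\delta$ satisfies Definition~\ref{def3} and (H1) by (U1) and (A2), the weak contraction $\|\func{F_\delta{_*}}\mu\|_1\leq\|\mu\|_1$ holds for every $\delta$ and every $\mu\in\mathcal{L}^1$, so iterating gives (P4) with $M_2=1$.

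The only genuinely delicate step is (P1), which already presupposes the nontrivial a priori uniform variation bound $V(\mu_\delta)\leq B_u$ provided by Theorem~\ref{thshgf}; once that is in hand, Lemma~\ref{UF2ass} furnishes the desired closeness estimate with the correct dependence on $R(\delta)$. The remaining properties (P2)--(P4) are essentially free: (P2) from the uniform Lasota--Yorke assumption (A1), and (P3)--(P4) from the spectral and contraction results already established for the unperturbed operator on $S^1$ and $\mathcal{L}^1$.
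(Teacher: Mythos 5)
Your proposal is correct and takes essentially the same approach as the paper: both verify (P1)--(P4) by invoking, in turn, Theorem~\ref{thshgf} with Lemma~\ref{UF2ass}, the Lasota--Yorke inequality (A1) applied to the fixed density $\phi_{1,\delta}$, Proposition~\ref{5.8}, and Proposition~\ref{weakcontral11234}. The only cosmetic difference is in (P2), where the paper lets $n\to\infty$ in the Lasota--Yorke estimate to obtain $M=D+1$, while you fix $n$ with $D\lambda^n\le 1/2$ to obtain $M=2D+1$; both give the required uniform bound.
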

\begin{proof}
	We need to verify that $\{F_\delta\}_{\delta \in [0,1)}$ satisfies properties P1, P2, P3, and P4. 
	
	For P2, note that by (A1) and Proposition~\ref{weakcontral11234} we have  
	\begin{eqnarray*}
		\|\func{F_{\delta\,*}^n} \mu\|_{S^1} 
		&=& |\func{P}_{f_\delta}^n \phi_1|_{v} + \|\func{F_{\delta\,*}^n} \mu\|_{1}  \\
		&\leq& D \lambda^n |\phi_1|_{v} + D |\phi_1|_{1} + \|\mu\|_{1} \\
		&\leq& D \lambda^n \|\mu\|_{S^1} + (D+1) \|\mu\|_{1} \ \forall \ \delta.
	\end{eqnarray*}
	Therefore, if $\mu_\delta$ is a fixed probability measure for the operator $\func{F_{\delta\,*}}$, the above inequality yields P2 with $M = D+1$.
	
	Property P1 follows directly from Theorem~\ref{thshgf} and Lemma~\ref{UF2ass}.  
	Finally, P3 and P4 follow, respectively, from Proposition~\ref{5.8} and from Proposition~\ref{weakcontral11234} applied to each $F_\delta$.
\end{proof}

\begin{proof}[Proof of Theorem~\ref{htyttigu}]
	The claim follows directly from the above results together with Theorem~\ref{dlogd}.  
	This completes the proof.
\end{proof}

Many interesting perturbations of $F$ give rise to a linear $R(\delta)$. For example, this occurs for perturbations defined with respect to topologies on the set of skew-products induced by $C^r$ topologies. In particular, if 
\[
R(\delta) = K_6 \delta
\] 
for some constant $K_6$ and all $\delta$, we immediately obtain the following corollary.

\begin{corollary}[Quantitative stability for deterministic perturbations with linear $R(\delta)$]
	Let $\{F_\delta\}_{\delta \in [0,1)}$ be an admissible $R(\delta)$-perturbation with $R(\delta) = K_6 \delta$. Denote by $\mu_\delta$ the unique invariant probability measure of $F_\delta$ in $S^1$. Then there exist constants $D_2 < 0$ and $\delta_1 \in (0,\delta_0)$ such that, for all $\delta \in [0,\delta_1)$,
	\[
	\|\mu_\delta - \mu_0\|_1 \leq D_2 \, \delta \, |\log \delta|.
	\]
	\label{htyttigui}
\end{corollary}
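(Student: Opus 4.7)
\textbf{Proof plan for Corollary \ref{htyttigui}.} The strategy is to observe that this corollary is a direct specialization of Theorem \ref{htyttigu} to the case where the modulus $R(\delta)$ is linear in $\delta$, so the work reduces to checking that the linear choice is admissible under the hypotheses of Theorem \ref{htyttigu} and then substituting.

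First I would verify that $R(\delta) = K_6 \delta$ satisfies the qualitative condition $\lim_{\delta \to 0^+} R(\delta) \log(\delta) = 0$ that is built into the definition of a $R(\delta)$-perturbation: this holds because $K_6 \delta \log \delta \to 0$ as $\delta \to 0^+$, so the hypothesis on $R$ appearing in (U2) is automatically satisfied. Hence a family $\{F_\delta\}$ admissible in the linear sense is in particular an admissible $R(\delta)$-perturbation to which Theorem \ref{htyttigu} applies.

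Next I would apply Theorem \ref{htyttigu} directly: there exist a constant $\widetilde{D}_2 > 0$ and $\delta_1 \in (0,\delta_0)$ such that, for all $\delta \in [0,\delta_1)$,
\[
\|\mu_\delta - \mu_0\|_1 \leq \widetilde{D}_2 \, R(\delta) \, |\log \delta| = \widetilde{D}_2 K_6 \, \delta \, |\log \delta|.
\]
Setting $D_2 := \widetilde{D}_2 K_6$ yields the stated inequality.

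There is essentially no obstacle in this argument, since the content of the result is already captured by Theorem \ref{htyttigu}; the only care needed is to absorb the multiplicative constant $K_6$ from the linear profile $R(\delta) = K_6 \delta$ into the final constant $D_2$, which preserves the $\delta |\log \delta|$ rate. This completes the proof.
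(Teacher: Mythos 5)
Your proposal is correct and matches the paper's (implicit) argument: the paper treats the corollary as an immediate specialization of Theorem~\ref{htyttigu}, and you supply the routine check that $R(\delta)=K_6\delta$ satisfies $\lim_{\delta\to 0^+}R(\delta)\log\delta=0$ and then absorb $K_6$ into the constant. One small remark worth flagging: the corollary as printed says ``$D_2<0$,'' which is evidently a typo for $D_2>0$, and your derivation correctly produces a positive constant $D_2=\widetilde{D}_2K_6$.
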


\section{Natural $C^r$ perturbations induce admissible $R(\delta)$-perturbations}\label{jhgfds}

In this section, we show that admissible $R(\delta)$-perturbations arise naturally from classical $C^r$ perturbations of the branches of the system. In particular, we provide explicit and verifiable conditions ensuring that such perturbations satisfy Definition~\ref{ad}. We also verify that the examples considered in this article fulfill these conditions.

\subsection{The models considered}

Let us assume that our dynamical system $F_0=(f_0,G_0)$ satisfies the following hypotheses.

\subsubsection{On $G_0$}\label{G_0}

We assume that $G_0$ satisfies (H1) and (H2). Moreover, we assume that $G_0$ admits a $C^2$ extension to the boundaries of the vertical strips $I_i \times [0,1]$ for all $i$.

\subsubsection{On $f_0$} \label{f_0}

We assume that the unperturbed map $f_0$ falls into one of the following two cases: 
\begin{enumerate}
    \item [(1)] $f_0$ satisfies Definition~\ref{def3}, exhibits strict convexity in each branch ($f_{0,i}'' > 0$), and satisfies the summability condition $\sum_{i=1}^\infty \dfrac{1}{|f'_{0,i}(a_i)|} < + \infty$;
    \item [(2)] $f_0$ satisfies Definition~\ref{def2} and the summability condition $\sum_{i=1}^\infty \dfrac{1}{|f'_0(a_i)|} < + \infty$.
\end{enumerate}
Moreover, we assume that in both cases the map satisfies the following regularity and expansion conditions:
\begin{enumerate}
    \item [(a)] There exists $D > 0$ such that $\dfrac{|f_0''(x)|}{|f_0'(x)|^2} \le D$ for all $i \geq 1$ and $x \in \text{int}(I_i)$;
    \item [(b)] $|f_0'(x)| \geq 1$ for all $x$, and there exists $\lambda > 1$ such that $|(f_0^2)'(x)| \geq \lambda$. 
\end{enumerate}
As a direct consequence of (a) and (b), it is a standard result that $f_0$ satisfies the bounded distortion condition for some constant $K > 0$:
\begin{equation}\label{adler}
    \frac{|(f_0^n)'(x)|}{|(f_0^n)'(y)|} \leq K
\end{equation}
for all $n \geq 1$ and $x,y$ in the same cylinder $J \in \mathcal{P}^{(n)}$.

Finally, we assume that $f_0(I_i) = (0,1)$ for all $i \ge 1$. In particular, $f_0$ is assumed to have continuous first and second order derivatives in the interior of $I_i$ for every $i$, which may blow up at the boundaries of the intervals.

\begin{remark}\label{oiytymkgf}
    Note that the Gauss map (Example~\ref{gauss}) satisfies case (1), as it is strictly convex, its derivatives are summable at the endpoints, and it satisfies Definition~\ref{def3} for its second iterate ($n_0=2$). On the other hand, the Lüroth map (Example~\ref{luroth}) directly satisfies case (2). Furthermore, in both models, the skew-product $F_0$ satisfies (H3) for $k=1$.
\end{remark}

\begin{remark}\label{exp}
    Note that, by Theorem \ref{raj}, if $f_0$ satisfies either condition (1) or (2), there exists an iterate $N \in \mathbb{N}$ such that $\inf |(f_0^N)'| > 2$. 
\end{remark}

\subsection{On the perturbation}

\subsubsection{On $G_\delta$}\label{Gdelta}

We assume that each perturbed fiber map $G_\delta$ is a $C^2$ $\delta$-perturbation of $G_0$ when restricted to each vertical strip $I_i \times [0,1]$. Let $G_{\delta,i} = G_\delta|_{I_i \times [0,1]}$. We assume that for all $i \in \mathbb{N}$ and all $\delta \in [0, 1)$,
\[
\|G_{\delta,i} - G_{0,i}\|_{C^2} \leq \delta.
\]

\subsubsection{On $f_\delta$}\label{fdelta}

Let $\{f_\delta\}_\delta$ be a family of maps $f_\delta : \bigcup_{i=1}^\infty I_i \longrightarrow [0,1]$ such that:

\begin{enumerate}
    \item [(1)] If $f_0$ satisfies case (1) of the previous section, then $\|f_{0,i} - f_{\delta,i}\|_{C^2} \leq \delta$ for all $i$;
    \item [(2)] If $f_0$ satisfies case (2), then $\|f_{0,i} - f_{\delta,i}\|_{C^2} \leq \delta$. 
\end{enumerate}
In particular, $f_\delta$ is $C^2$ in the interior of each interval $I_i = (a_i, b_i)$ for all $i$. In both cases, to simplify the calculations, we assume that $f_\delta|_{I_i} : I_i \longrightarrow (0,1)$ is a diffeomorphism for every $i$.

\begin{remark}\label{infdelta}
    By item (b) and the uniform $C^1$-proximity between $f_0$ and $f_\delta$, there exist constants $\tilde{\lambda} > 0$ and $\delta_1 \in (0,1)$ such that
    \begin{equation}
        \inf_{x} |f_\delta'(x)| \geq \tilde{\lambda}
    \end{equation}
    holds uniformly for all $\delta \in [0, \delta_1)$.
\end{remark}

Now we are in position to state the main theorem of this section.

\begin{athm}\label{tc}
    Suppose that $F_0 = (f_0, G_0)$ satisfies the hypotheses established in Sections \ref{G_0} and \ref{f_0}, and let $\{F_\delta\}_{\delta \in [0,1)}$ be a one-parameter family of systems satisfying the hypotheses of Sections \ref{Gdelta} and \ref{fdelta} for each $\delta$. Then, there exists $\delta_1 \in (0,1)$ such that the subfamily $\{F_\delta\}_{\delta \in [0,\delta_1)}$ is an $R(\delta)$-admissible perturbation in the sense of Definitions \ref{rdelta} and \ref{ad}.
\end{athm}

\subsection{Verification of condition (U1)}

The following lemmas demonstrate that small $C^r$ perturbations of systems satisfying the conditions of either case (1) or case (2) naturally satisfy condition (U1).

\begin{lemma}
Let $f_0$ be a map satisfying case (1); that is, $f_0$ satisfies Definition~\ref{def3}, exhibits strict convexity uniformly bounded away from zero (there exists $c_0 > 0$ such that $f_{0,i}''(x) \geq c_0$ for all $i \in \mathbb{N}$ and $x \in I_i$), and satisfies the summability condition $\sum_{i=1}^\infty |f'_{0,i}(a_i)|^{-1} < \infty$. If $f_\delta$ is a piecewise $C^2$ map defined on the same partition as $f_0$, and $\sup_{i \in \mathbb{N}} \|f_{0,i} - f_{\delta,i}\|_{C^2} < \delta$, then for $\delta$ sufficiently small, $f_\delta$ satisfies condition \textbf{(U1)}. Moreover, each $f_\delta$ is strictly convex.
\end{lemma}

\begin{proof}
Since $\sup_{i \in \mathbb{N}} \|f_{0,i} - f_{\delta,i}\|_{C^2} < \delta$, we have for all $x \in I_i$:
\[
f_{\delta,i}''(x) \geq f_{0,i}''(x) - \delta \geq c_0 - \delta.
\]
Choosing $\delta < c_0$ ensures that $f_{\delta,i}''(x) > 0$ for all $i \in \mathbb{N}$, thereby preserving strict convexity. 

Furthermore, the $C^1$-proximity guarantees that for small $\delta$, the derivatives at the left endpoints $f_{\delta,i}'(a_i)$ remain close to $f_{0,i}'(a_i)$, preserving the summability condition. Indeed, for each $i \in \mathbb{N}$, we have:
\[
|f_{\delta,i}'(a_i)| \geq |f_{0,i}'(a_i)| - \delta.
\]
Because $f_0$ satisfies the summability condition $\sum_{i \geq 1} |f_{0,i}'(a_i)|^{-1} < \infty$, it follows that $|f_{0,i}'(a_i)| \to \infty$ as $i \to \infty$. Thus, there exists an integer $N \in \mathbb{N}$ such that $|f_{0,i}'(a_i)| > 2\delta$ for all $i > N$. For such $i$, we obtain:
\[
|f_{\delta,i}'(a_i)| > |f_{0,i}'(a_i)| - \frac{1}{2}|f_{0,i}'(a_i)| = \frac{1}{2}|f_{0,i}'(a_i)|.
\]
Consequently, the tail of the perturbed series satisfies:
\[
\sum_{i=N+1}^{\infty} \frac{1}{|f_{\delta,i}'(a_i)|} < 2 \sum_{i=N+1}^{\infty} \frac{1}{|f_{0,i}'(a_i)|} < \infty.
\]
The finite sum for $i=1, \dots, N$ is strictly positive and well-defined provided we choose $\delta < \min_{1 \le i \le N} |f_{0,i}'(a_i)|$. Thus, $\sum_{i=1}^{\infty} |f_{\delta,i}'(a_i)|^{-1} < \infty$. Since $f_\delta$ inherits the necessary boundary conditions and structural properties of $f_0$, it satisfies the requirements of case (1), which establishes condition \textbf{(U1)}.
\end{proof}

The next lemma establishes the analogous result for piecewise expanding maps.

\begin{lemma}
Let $f_0$ be a map satisfying case (2); that is, $f_0$ satisfies Definition~\ref{def2} with uniform expansion constant $\lambda_0 > 1$ ($\inf_{i \in \mathbb{N}} \inf_{x \in I_i} |f_{0,i}'(x)| \geq \lambda_0$) and the summability condition $\sum_{i=1}^\infty |f'_{0,i}(a_i)|^{-1} < \infty$. If $f_\delta$ is a piecewise $C^1$ map defined on the same partition as $f_0$, and $\sup_{i \in \mathbb{N}} \|f_{0,i} - f_{\delta,i}\|_{C^1} < \delta$, then for $\delta$ sufficiently small, $f_\delta$ satisfies condition \textbf{(U1)}.
\end{lemma}

\begin{proof}
By the uniform $C^1$-proximity, for each branch $i \in \mathbb{N}$ and any $x \in I_i$, we have:
\[
|f_{\delta,i}'(x)| \geq |f_{0,i}'(x)| - \delta \geq \lambda_0 - \delta.
\]
By choosing $\delta < \lambda_0 - 1$, we define $\lambda_\delta = \lambda_0 - \delta > 1$. Therefore, $|f_{\delta,i}'(x)| \geq \lambda_\delta > 1$ for all $x$, meaning $f_\delta$ satisfies the uniform expansion requirement. Therefore, $f_\delta$ satisfies \textbf{(U1)}.

\end{proof}

\subsection{Verification of condition (U2)}

\subsubsection{Condition (U2.3)}

Verifying condition (U2.3) is the most straightforward step. To ensure this property holds, it suffices to assume that the restriction of $G_0$ to each vertical strip $I_i \times K$ admits a $C^2$ extension to the boundary, and that each perturbed fiber map $G_\delta$ is a $C^2$ $\delta$-perturbation of $G_0$ when restricted to these strips. That is, for all $i \in \mathbb{N}$ and all $\delta \in [0, \delta_0)$, we have
\[
\|G_\delta - G_0\|_{C^2(I_i \times K)} \leq \delta.
\]
It is worth noting that this $C^2$ proximity elegantly controls the necessary dynamical constants: the supremum of the partial derivatives in the $y$-direction bounds the contraction rate along the fibers, while the supremum of the partial derivatives in the $x$-direction yields the uniform horizontal Lipschitz constant. Consequently, this assumption directly guarantees the $C^0$ proximity required by (U2.3) with $R(\delta) = \mathcal{O}(\delta)$, fully satisfying the condition.

\subsubsection{Condition (U2.2)}

By the $C^2$ perturbation assumptions, for $\delta$ sufficiently small, $f_\delta$ inherits the robust properties of $f_0$. Specifically, $f_\delta$ preserves the expansion condition (with $|(f_\delta^2)'(x)| \geq \tilde{\lambda} > 1$) and the uniform distortion bound: there exists $\tilde{D} > 0$ such that 

\begin{equation}\label{adlerd}
    \sup_{x \in I_i} |f_\delta''(x)|/|f_\delta'(x)|^2 \leq \tilde{D}
\end{equation}for all $i \geq 1$. As a direct consequence of the robustness of these two properties, $f_\delta$ automatically satisfies the global bounded distortion condition for all iterates. That is, there exists a uniform constant $K > 0$ such that for all $n \geq 1$, all cylinders $J \in \mathcal{P}^{(n)}$, and all $x,y \in J$, we have 
\begin{equation}
    |(f_\delta^n)'(x)| \leq K |(f_\delta^n)'(y)|.
\end{equation}In particular, for $n=1$, we obtain

\begin{equation}\label{ituryt}
|f_\delta'(x)| \leq K |f_\delta'(y)|    
\end{equation} for all $x,y \in I_i$, which is the crucial property for the following estimates.

The following lemma establishes U2.2 and provides the necessary geometric controls.

\begin{lemma}\label{lem:geom_consequences}
Assume that $f_0$ satisfies the hypotheses of either case (1) or case (2), and let $f_\delta$ be a $\delta$-perturbation as defined previously. Then, for $\delta$ sufficiently small, the following properties hold:
\begin{enumerate}
    \item[(i)] There exists a constant $C_1 > 0$ such that, for all $i \geq 1$ and all $\gamma \in I$,
    \[
    |\gamma_{\delta,i} - \gamma_{0,i}| \leq \frac{C_1}{|f'_\delta(a_i)|} \delta.
    \]
    In particular, $\esssup_\gamma \max_{i \geq 1} |\gamma_{0,i} - \gamma_{\delta,i}| = \mathcal{O}(\delta)$, which implies \textbf{(U2.2)}.
    
    \item[(ii)] There exists a constant $C_2 > 0$ such that for all $i \geq 1$ and all $x,y \in I_i$,
    \[
    \left| \frac{1}{f_\delta'(x)} - \frac{1}{f_\delta'(y)} \right| \leq C_2 |x-y|.
    \]
\end{enumerate}
\end{lemma}

\begin{proof}

Fix $\gamma \in I$ and let $\gamma_{0,i}, \gamma_{\delta,i} \in I_i$ satisfy $f_0(\gamma_{0,i}) = \gamma$ and $f_\delta(\gamma_{\delta,i}) = \gamma$. Then
\[
|f_\delta(\gamma_{\delta,i}) - f_\delta(\gamma_{0,i})| = |\gamma - f_\delta(\gamma_{0,i})| = |f_0(\gamma_{0,i}) - f_\delta(\gamma_{0,i})| \leq \|f_\delta - f_0\|_\infty \leq \delta.
\]
By the Mean Value Theorem and the estimate above there exists $\xi \in I_i$ such that
\[
|f_\delta(\gamma_{\delta,i}) - f_\delta(\gamma_{0,i})| = |f_\delta'(\xi)| \cdot |\gamma_{\delta,i} - \gamma_{0,i}| \leq \delta \implies |\gamma_{\delta,i} - \gamma_{0,i}| \leq \frac{\delta}{|f_\delta'(\xi)|}.
\]
By Equation (\ref{ituryt}), we have $|f_\delta'(a_i)| \leq K |f_\delta'(\xi)|$. Hence,
\[
|\gamma_{\delta,i} - \gamma_{0,i}| \leq \frac{K \delta}{|f'_\delta(a_i)|}.
\]
This proves (i) with $C_1 = K$. Since $|f_0'(x)| \geq 1$ by assumption, for $\delta$ sufficiently small we are guaranteed that $|f_\delta'(a_i)| \geq 1/2$. Thus, the distance is uniformly bounded by $2K\delta$, establishing U2.2 with $R(\delta) = \mathcal{O}(\delta)$.

By the Mean Value Theorem, for any $x,y \in I_i$, there exists $\eta \in I_i$ such that
\[
\left| \frac{1}{f_\delta'(x)} - \frac{1}{f_\delta'(y)} \right| = \left| \left(\frac{1}{f_\delta'}\right)'(\eta) \right| \cdot |x-y| = \frac{|f_\delta''(\eta)|}{|f_\delta'(\eta)|^2} \cdot |x-y|.
\]Applying Equation (\ref{adlerd}), we conclude that there exists $C_2 = \tilde{D} > 0$ such that
\[
\left| \frac{1}{f_\delta'(x)} - \frac{1}{f_\delta'(y)} \right| \leq C_2 |x-y|.
\]
This proves (ii).
\end{proof}

\subsubsection{Condition U2.1}

The following Lemma establishes U2.1.

\begin{lemma}\label{lem:U21}
Assume the hypotheses of Lemma~\ref{lem:geom_consequences}. Then there exists a constant $C>0$ such that, for all $\delta$ sufficiently small,
\[
\sum_{i=1}^{\infty} \left| \frac{1}{f_\delta'(\gamma_{\delta,i})} - \frac{1}{f_0'(\gamma_{0,i})} \right| \leq C \delta,
\]
which implies condition U2.1 with $R(\delta) = \mathcal{O}(\delta)$.
\end{lemma}

\begin{proof}
Fix $\gamma \in I$. For each $i \geq 1$, the triangle inequality yields:
\[
\left| \frac{1}{f_\delta'(\gamma_{\delta,i})} - \frac{1}{f_0'(\gamma_{0,i})} \right| \leq \underbrace{\left| \frac{1}{f_\delta'(\gamma_{\delta,i})} - \frac{1}{f_\delta'(\gamma_{0,i})} \right|}_{\text{I}} + \underbrace{\left| \frac{1}{f_\delta'(\gamma_{0,i})} - \frac{1}{f_0'(\gamma_{0,i})} \right|}_{\text{II}}.
\]

\noindent
\textbf{Estimate of I:}
By item (ii) of Lemma~\ref{lem:geom_consequences} and the stability of inverse branches (Lemma~\ref{lem:geom_consequences}(i)), we have
\[
\text{I} \leq C_2 |\gamma_{\delta,i} - \gamma_{0,i}| \leq C_2 \frac{C_1}{|f'_\delta(a_i)|} \delta.
\]
Summing over $i$, we obtain $\sum_{i=1}^\infty \text{I} \leq C_1 C_2 \delta \sum_{i=1}^\infty |f'_\delta(a_i)|^{-1}$. Since the perturbed map preserves the summability condition of $f_0$, this series converges to a finite constant, meaning the sum is bounded by $\mathcal{O}(\delta)$.

\medskip
\noindent
\textbf{Estimate of II:}
Using the identity $|a^{-1} - b^{-1}| = |b - a| / |ab|$, we obtain:
\[
\text{II} = \frac{\left| f_0'(\gamma_{0,i}) - f_\delta'(\gamma_{0,i}) \right|}{|f_\delta'(\gamma_{0,i})| \, |f_0'(\gamma_{0,i})|}.
\]
By the uniform $C^1$ distance, the numerator is bounded by $\delta$. For the denominator, the bounded distortion of both $f_0$ and $f_\delta$ guarantees that $|f_0'(\gamma_{0,i})| \geq K^{-1} |f_0'(a_i)|$ and $|f_\delta'(\gamma_{0,i})| \geq K^{-1} |f_\delta'(a_i)|$. Furthermore, for $\delta$  sufficiently small, $|f_\delta'(a_i)| \geq \frac{1}{2} |f_0'(a_i)|$. Therefore,
\[
|f_\delta'(\gamma_{0,i})| \, |f_0'(\gamma_{0,i})| \geq \frac{1}{2 K^2} |f_0'(a_i)|^2.
\]
Substituting this back, we find:
\[
\text{II} \leq \frac{2 K^2 \delta}{|f_0'(a_i)|^2} \leq \frac{2 K^2 \delta}{|f_0'(a_i)|},
\]
where the last inequality holds because $|f_0'(a_i)| \geq 1$. Summing over $i$, the convergence of $\sum_{i=1}^\infty |f_0'(a_i)|^{-1}$ ensures that $\sum_{i=1}^\infty \text{II} = \mathcal{O}(\delta)$.

\medskip
\noindent

Combining both estimates, the entire sum is bounded by $C\delta$ for some constant $C>0$, which establishes U2.1.
\end{proof}

\subsection{Obtaining condition (A1)}

In the following subsection, we show some important results and constructions of \cite{RLK}, used in this section.

\subsubsection{Results on Rychlik's article}

Let $X$ be a totally ordered order-complete set. Open intervals constitute a base of a compact topology in $X$. We fix a regular, Borel probabilistic measure $m$ on $X$.

\subsubsection*{The Dynamical System and Rychlik's Assumptions}

Let $T: U \to X$ be a continuous map, where $U \subset X$ is open and dense, and $m(U) = 1$. Let $S = X \setminus U$ denote the set of singularities. We assume the following conditions:

\begin{itemize}
    \item There exists a countable family $\beta$ of closed intervals with disjoint interiors such that $\bigcup _{B \in \beta} B \supset U$ and, for any $B \in \beta$, the set $B \cap S$ consists exactly of the endpoints of $B$.
    \item For any $B \in \beta$, the restriction $T|_{B \cap U}$ admits an extension to a homeomorphism of $B$ with some interval in $X$.
    \item A weight function $g: X \to \mathbb{R}_+$ is given, satisfying $\|g\|_\infty < 1$, $V g < +\infty$, and $g|_S = 0$.
\end{itemize}

The Perron-Frobenius operator $\operatorname{P}: L_m^1 \to L_m^1$ associated with $T$ and $g$ is defined by:
\begin{equation}
    \operatorname{P}f(x) = \sum_{y \in T^{-1}(x)} g(y)f(y).
\end{equation}
Rychlik assumes that $P$ preserves the measure $m$, which means $m(Pf) = m(f)$ for every $f \in L_1$, and proves that $J = 1/g$ acts as the Jacobian of $T$.

To deal with the iterations of $T$, we define for $N \geq 1$: $S_N = \bigcup_{k=0}^{N-1} T^{-k}(S)$, $U_N = X \setminus S_N$, and the refined partition $\beta^N = \bigvee_{k=0}^{N-1} T^{-k}(\beta)$. The weight function for the $N$-th iterate is given by $$g_N|_{S_N} = 0$$ and $$g_N|_{U_N} = g \cdot (g \circ T) \cdot \ldots \cdot (g \circ T^{N-1}).$$

The proof of the following lemma can be found in \cite{RLK}.

\begin{lemma}[Rychlik's Lemma 1] \label{lem:rychlik_1}
    For every $u \in BV_m$,
    \[
        V(u \cdot g) = \sum_{B \in \beta} V_B(u \cdot g).
    \]
\end{lemma}

\subsubsection{Obtaining a uniform Lasota-Yorke inequality}

\begin{proposition} \label{uniformLY}
Let $\{f_\delta\}_{\delta \in [0,\delta_0)}$ be a family of maps satisfying the hypotheses of Theorem \ref{tc}. Then, there exist constants $\lambda_0 \in (0, 1)$ and $D > 0$ such that for all $\delta$ sufficiently small, and an appropriate iterate $N \geq 1$, the Perron-Frobenius operator $\operatorname{P}_{f_\delta}$ satisfies the uniform Lasota-Yorke inequality:
\begin{equation}
    V_X(\operatorname{P}_{f_\delta}^N u) \leq \lambda_0 V_X(u) + D \|u\|_1, \quad \text{for all } u \in BV_m.
\end{equation}
\end{proposition}

\begin{proof}
By the uniform expansion of the family (Remark \ref{exp}), there exists an integer $N \geq 1$ and a constant $c_0 > 2$ such that $\inf |(f_\delta^N)'| \geq c_0 > 2$ for all $\delta$ sufficiently small. Let $g_{\delta, N} = 1/|(f_\delta^N)'|$. It follows that $\|g_{\delta,N}\|_\infty \leq c_0^{-1} < 1/2$.

For any $u \in BV_m$, applying Rychlik's Lemma 1 to the $N$-th iterate, we have:
\begin{equation}
    V_X(\operatorname{P}_{f_\delta}^N u) \leq \sum_{B \in \beta_\delta^N} V_B (u \cdot g_{\delta, N}),
\end{equation}
where $\beta_\delta^N$ is the natural partition of monotonicity for $f_\delta^N$. By the standard product rule for variation on each interval $B$, we obtain:
\begin{equation}
    V_B(u \cdot g_{\delta, N}) \leq \sup_{x \in B} |g_{\delta, N}(x)| \cdot V_B(u) + \sup_{x \in B} |u(x)| \cdot V_B(g_{\delta, N}).
\end{equation}
Summing over all $B \in \beta_\delta^N$:
\begin{equation}\label{eq:sum_ly}
    V_X(\operatorname{P}_{f_\delta}^N u) \leq \|g_{\delta, N}\|_\infty \sum_{B \in \beta_\delta^N} V_B(u) + \sum_{B \in \beta_\delta^N} \sup_{B} |u| \cdot V_B(g_{\delta, N}).
\end{equation}
Since $\sum_{B} V_B(u) \leq V_X(u)$ and $\|g_{\delta,N}\|_\infty \leq c_0^{-1}$, the first term is bounded by $c_0^{-1} V_X(u)$.

For the second term, recall that by Equation (\ref{adlerd}), $f_\delta$ satisfies a uniform Adler's condition: $\frac{|(f_\delta^N)''|}{((f_\delta^N)')^2} \leq K_D$. This implies that $V_B(g_{\delta, N}) = \int_B \frac{|(f_\delta^N)''|}{((f_\delta^N)')^2} dx \leq K_D \cdot m(B)$ for all $B \in \beta_\delta^N$. Therefore:
\begin{equation}
    \sum_{B \in \beta_\delta^N} \sup_{B} |u| \cdot V_B(g_{\delta, N}) \leq K_D \sum_{B \in \beta_\delta^N} \sup_{B} |u| \cdot m(B).
\end{equation}
Substituting, $\sup_B |u| \leq \frac{1}{m(B)} \int_B |u| dm + V_B(u)$, this yields:
\begin{align*}
    K_D \sum_{B \in \beta_\delta^N} \left( \int_B |u| dm + V_B(u) m(B) \right) &\leq K_D \|u\|_1 + K_D \max_{B \in \beta_\delta^N} m(B) \sum_{B \in \beta_\delta^N} V_B(u) \\
    &\leq K_D \|u\|_1 + K_D m_0 V_X(u),
\end{align*}
where $m_0 = \max_{B \in \beta_\delta^N} m(B)$. Since the family is uniformly expanding, the maximum length of the partition elements $m_0$ can be made arbitrarily small by choosing a slightly larger $N$ if necessary. 

Combining these estimates back into (\ref{eq:sum_ly}), we obtain:
\begin{equation}
    V_X(\operatorname{P}_{f_\delta}^N u) \leq (c_0^{-1} + K_D m_0) V_X(u) + K_D \|u\|_1.
\end{equation}
By fixing an iterate $N$ sufficiently large such that $\lambda_0 := c_0^{-1} + K_D m_0 < 1$, we establish the uniform Lasota-Yorke inequality with $\lambda_0 < 1$ and $D = K_D$, uniformly for all $\delta \in [0, \delta_1)$.
\end{proof}

\subsection{Obtaining condition (A2)}

\subsubsection{Some additional results}
\begin{proposition}\label{kjdfghdfg}
Let $X = [0,1]$ be partitioned as $X = \left(\bigcup_{i=1}^\infty I_i\right) \cup S$, where $I_i = (a_i, b_i)$ are disjoint open intervals and $S$ is the set of boundary points. 
Let $g: X \to \mathbb{R}$ be a function of bounded variation such that $g|_S = 0$. Assume that the lateral limits $g(a_i^+)$ and $g(b_i^-)$ exist for all $i$. Then, the variation of $g$ on $X$ is given by:
$$
V_{X}(g) = \sum_{i=1}^\infty \Big( V_{I_i}(g) + |g(a_i^+)| + |g(b_i^-)| \Big).
$$
\end{proposition}

\begin{proof}
Define a family of functions $g_i : X \to \mathbb{R}$, where each $g_i|_{I_i}=g|_{I_i}$, that is:
$$
g_i(x) = 
\begin{cases} 
g(x), & \text{if } x \in I_i \\ 
0, & \text{if } x \notin I_i.
\end{cases}
$$
Since $g(x) = 0$ for all $x \in S$, and the sets $I_i$ are mutually disjoint, it holds that
$$
g(x) = \sum_{i=1}^\infty g_i(x) \quad \text{for all } x \in X.
$$Let us compute the variation of $g_i$ over the entire space $X$. 
Consider a finite sequence $a_i < x_1 < x_2 < \dots < x_n < b_i$. The variation sum along this sequence is
$$
|g_i(x_1) - g_i(a_i)| + \sum_{k=1}^{n-1} |g_i(x_{k+1}) - g_i(x_k)| + |g_i(b_i) - g_i(x_n)|.
$$
Since $g_i(a_i) = 0$ and $g_i(b_i) = 0$, this reduces to:
$$
|g(x_1)| + \sum_{k=1}^{n-1} |g(x_{k+1}) - g(x_k)| + |g(x_n)|.
$$
Taking the supremum over all such finite sequences inside $I_i$, the middle term yields the variation $V_{I_i}(g)$, while the extreme points $x_1$ and $x_n$ can be chosen arbitrarily close to $a_i$ and $b_i$. Thus:
$$
V_{X}(g_i) = |g(a_i^+)| +V_{I_i}(g) + |g(b_i^-)|.
$$Since the functions $g_i$ have disjoint supports up to points where they all vanish, we have:
$$
V_{X}\left(\sum_{i=1}^\infty g_i\right) = \sum_{i=1}^\infty V_{X}(g_i).
$$
This yields
$$
V_{X}(g) = \sum_{i=1}^\infty \Big( V_{I_i}(g) + |g(a_i^+)| + |g(b_i^-)| \Big),
$$
which concludes the proof.
\end{proof}

\begin{proposition}\label{mcvnc}
Let $f_0, f_\delta: U \to X$ be maps satisfying the hypothesis of Theorem \ref{tc}, where $X=[0,1]$ and $U = \bigcup B_i^\circ$ for a partition $\beta = \{B_i\}$ of closed intervals $B_i = [a_i, b_i]$. 
Define the functions $g_0 = 1/f_0'$ and $g_\delta = 1/f_\delta'$ on $U$, with $g_0|_S = g_\delta|_S = 0$ for $S = X \setminus U$. Then there exist constants $P > 0$ and $\delta_1 \in (0,1)$ such that:
$$
V_X(g_0 - g_\delta) \leq P \delta,
$$for all $\delta \in [0,\delta _1)$.
\end{proposition}

\begin{proof}
There exists a constant $\tilde{\lambda} > 0$ and $\delta_1 \in (0,1)$ such that $|f_0'|, |f_\delta'| \geq \tilde{\lambda}$  (see Remark \ref{infdelta}) for all $\delta \in [0,\delta _1)$. Define $h = g_0 - g_\delta$. Since $g_0|_S = 0$ and $g_\delta|_S = 0$, we have $h|_S = 0$. By Proposition \ref{kjdfghdfg}, the total variation decomposes into:
$$
V_X(h) = \sum_{B_i \in \beta} \left( |h(a_i^+)| + \int_{a_i}^{b_i} |h'(x)| \, dx + |h(b_i^-)| \right).
$$ Since $|f_\delta' - f_0'| \leq \delta$ and $|f_\delta'| \geq \tilde{\lambda}$, we have:
$$
|h(a_i^+)| = \frac{|f_\delta'(a_i^+) - f_0'(a_i^+)|}{|f_0'(a_i^+) f_\delta'(a_i^+)|} \leq \frac{\delta}{\tilde{\lambda} |f_0'(a_i^+)|} = \frac{\delta}{\tilde{\lambda}} |g_0(a_i^+)|.
$$
Summing over all branches, we obtain $\frac{\delta}{\tilde{\lambda}} \sum_i |g_0(a_i^+)|$. Since $g_0 \in BV(X)$, this sum converges to a constant $S(g_0)$. The same upper bound applies to $|h(b_i^-)|$.

Computing the derivative $h'(x)$ in the interior of the intervals and noting that $\frac{|f_0''|}{(f_0')^2} = |g_0'(x)|$:
$$
|h'(x)| \leq \frac{|f_\delta''(x) - f_0''(x)|}{(f_\delta'(x))^2} + |g_0'(x)| \frac{|f_0'(x) - f_\delta'(x)| \cdot |f_0'(x) + f_\delta'(x)|}{(f_\delta'(x))^2}.
$$
Applying the $C^2$ norm bounds ($\|f_\delta' - f_0'\|_\infty \leq \delta$ and $\|f_\delta'' - f_0''\|_\infty \leq \delta$), the lower bound $|f_\delta'| \geq \tilde{\lambda}$, and the distortion hypothesis $|g_0'(x)| \leq D$:
$$
|h'(x)| \leq \frac{\delta}{\tilde{\lambda}^2} + D \delta \left( \frac{|f_0'(x)|}{(f_\delta'(x))^2} + \frac{1}{|f_\delta'(x)|} \right).
$$
Since $|f_0'(x)| \leq |f_\delta'(x)| + \delta$, the term in parentheses is bounded by a constant $K$ (where $K \approx \frac{2}{\tilde{\lambda}}$ for small $\delta$). Thus, the derivative is uniformly bounded by:
$$
|h'(x)| \leq \delta \left( \frac{1}{\tilde{\lambda}^2} + D K \right).
$$
Integrating over $I_i$, we get:
$$
\int_{I_i} |h'(x)| \, dx \leq \delta \left( \frac{1}{\tilde{\lambda}^2} + D K \right) m(I_i).
$$Summing over $i$, and using the fact that $\sum_i m(I_i) = 1$:
$$
V_X(h) \leq \delta \left[ \frac{2S(g_0)}{\tilde{\lambda}} + \left( \frac{1}{\tilde{\lambda}^2} + D K \right) \sum_i m(I_i) \right] = \delta \left[ \frac{2S(g_0)}{\tilde{\lambda}} + \frac{1}{\tilde{\lambda}^2} + D K \right].
$$
Taking the term in brackets as the constant $P$, we conclude that $V_X(g_0 - g_\delta) \leq P \delta$.
\end{proof}

\begin{proposition}\label{prop:A2}
Let $\{F_\delta\}_{\delta \in [0,\delta_0)}$ be a family of systems satisfying the hypotheses of Theorem \ref{tc}. Then, for $\delta$ sufficiently small, the family satisfies condition (A2).
\end{proposition}

\begin{proof}
Assume that the unperturbed system $F_0$ satisfies (H3) with $k=1$ (as is the case for the models considered in Remark \ref{oiytymkgf}). By hypothesis (b), $\inf |f_0'| \geq 1$, which implies $\operatorname{ess\,sup} \frac{1}{|f_0'|} \leq 1$. Therefore, we have:
$$
\alpha_{4,0} := \alpha_0 \operatorname{ess\,sup} \frac{1}{|f_0'|} \leq \alpha_0 < 1.
$$

Since $\alpha_\delta = \sup \left| \frac{\partial G_\delta}{\partial y} \right|$, the uniform $C^2$-proximity of $G_\delta$ to $G_0$ ensures that $\alpha_\delta$ is arbitrarily close to $\alpha_0$. Similarly, the uniform $C^1$-proximity of $f_\delta$ to $f_0$ (Remark \ref{infdelta}) ensures that $\operatorname{ess\,sup} \frac{1}{|f_\delta'|}$ is close to $\operatorname{ess\,sup} \frac{1}{|f_0'|}$. Thus, there exists $\delta_1 \in (0, \delta_0)$ such that $\sup_{\delta \in [0, \delta_1)} \alpha_{4,\delta} < 1$.

To bound $U_{4,\delta}$, recall its definition for $k=1$:
$$
U_{4,\delta} := |G_\delta|_{\operatorname{Lip}} \cdot \operatorname{ess\,sup} \frac{1}{|f_\delta'|} + V_{I} \left( \frac{1}{|f_\delta'|} \right).
$$

By the uniform $C^2$-proximity of $G_\delta$ to $G_0$, the Lipschitz constant $|G_\delta|_{\operatorname{Lip}}$ is uniformly bounded for small $\delta$. Furthermore, let $g_0 = 1/f_0'$ and $g_\delta = 1/f_\delta'$. By Proposition \ref{mcvnc}, we have $V_I(g_0 - g_\delta) \leq P \delta$ for all $\delta < \delta_1$. Using the subadditivity of the total variation, we obtain:
$$
V_{I} \left( \frac{1}{|f_\delta'|} \right) = V_I(g_\delta) \leq V_I(g_0) + V_I(g_0 - g_\delta) \leq V_I(g_0) + P \delta.
$$
Since the unperturbed map $f_0$ satisfies Adler's condition, its variation $V_I(g_0)$ is finite. This implies that the variation $V_{I} \left( \frac{1}{|f_\delta'|} \right)$ is uniformly bounded for all $\delta \in [0, \delta_1)$. 

Combining these bounds, we conclude that $\sup_{\delta \in [0, \delta_1)} U_{4,\delta} < \infty$, which completes the proof.
\end{proof}

\end{document}